\theoremstyle{plain}
\newtheorem*{thm*}{Theorem}
\newtheorem*{con*}{Conjecture}
\newtheorem{thm}{Theorem}[section]
\newtheorem{pro}[thm]{Proposition}
\newtheorem{lem}[thm]{Lemma}
\newtheorem{con}[thm]{Conjecture}
\newtheorem{cor}[thm]{Corollary}
\newtheorem*{str*}{Strategy}
\theoremstyle{definition}
\newtheorem{defn}[thm]{Definition}
\theoremstyle{remark}
\newtheorem{rmk}[thm]{Remark}
\newtheorem{exm}[thm]{Example}
\newcommand{\D}{\mathbb{D}}
\title[Flip Combinatorial Invariance]{Flip Combinatorial Invariance and Weyl groups}
\author[Esposito, Marietti, Stella]{Francesco Esposito\, \orcidlink{0000-0003-4627-9193}, Mario Marietti\, \orcidlink{0000-0001-7045-0347}, Salvatore Stella\, \orcidlink{0000-0001-5390-2081}}
\date{}
\address{Francesco Esposito, Dipartimento di Matematica, Universit\`a degli Studi di Padova,
via Trieste 63, 35121 Padova, Italy}
\address{Mario Marietti, Dipartimento  di Ingegneria Industriale e Scienze Matematiche, Universit\`a Politecnica delle Marche, via Brecce Bianche, 60131 Ancona,  Italy}
\address{Salvatore Stella, Dipartimento di Ingegneria e Scienze dell'Informazione e Matematica, Universit\`a degli Studi dell'Aquila, Via Vetoio SNC, 67100 L'Aquila, Italy}
\email{esposito@math.unipd.it}
\email{m.marietti@univpm.it}
\email{salvatore.stella@univaq.it}
\subjclass[2020]
{ 05E10 - 05E16 (primary), 20F55  (secondary)}
\keywords{Kazhdan--Lusztig polynomials, Combinatorial invariance, flipclasses}
\begin{document}

\maketitle

\begin{abstract}
In this work, we investigate the approach via flipclasses to the Combinatorial Invariance Conjecture for Kazhdan--Lusztig polynomials  of all Coxeter groups.
We prove the combinatorial invariance of  Kazhdan--Lusztig $\widetilde{R}$-polynomials of  Weyl groups modulo $q^7$ and of  Kazhdan--Lusztig $\widetilde{R}$-polynomials of type $A$  Weyl groups modulo $q^8$.  As a consequence, the Combinatorial Invariance Conjecture holds for all intervals up to length 8 in Weyl groups and up to length 10 in type $A$ Weyl groups.
\end{abstract}

\section{Introduction}

Kazhdan--Lusztig polynomials, introduced  by Kazhdan and Lusztig in \cite{K-L},  have contributed to the birth of what is now called geometric representation theory and have opened vast areas of research in representation theory, geometry, and algebraic combinatorics. These polynomials are indexed by two elements  $u$ and $v$ in a Coxeter group $W$, and, a priori, depend on the algebraic structure of $W$. It is a long-standing conjecture that, in fact, these polynomials depend only on the combinatorial structure. This conjecture  is now commonly referred to as the Combinatorial Invariance Conjecture and  has been the focus of active research for the past forty years (we refer the reader to \cite{BreOPAC} for details).

\begin{con}
\label{comb-inv-con}
The Kazhdan--Lusztig polynomial $P_{u,v}(q)$ depends only on the isomorphism type of the interval $[u,v]$ as a partially ordered set under Bruhat order.
\end{con}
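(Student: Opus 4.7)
The plan is to reduce Conjecture~\ref{comb-inv-con} to the corresponding statement for $\widetilde{R}$-polynomials and then attack the latter by a poset-intrinsic induction on $\ell(v)-\ell(u)$, organized through the flipclass machinery. First, one recalls that $P_{u,v}(q)$ can be recovered from the family $\{\widetilde{R}_{x,y}(q):u\le x\le y\le v\}$ by a recursive procedure that uses only the rank function of $[u,v]$ together with the $\widetilde{R}$-polynomials themselves. Consequently, if $\widetilde{R}_{u,v}(q)$ depends only on the isomorphism type of $[u,v]$, then so does $P_{u,v}(q)$, and Conjecture~\ref{comb-inv-con} follows. The remainder of the argument is therefore devoted to proving combinatorial invariance of $\widetilde{R}$-polynomials in full generality.

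For the inductive step, given an interval $[u,v]$ one selects a simple reflection $s$ with $sv<v$ and applies the standard $\widetilde{R}$-recursion to express $\widetilde{R}_{u,v}$ in terms of $\widetilde{R}$-polynomials attached to pairs living in strictly shorter intervals. By induction those polynomials are determined by their respective posets, so what remains is to control the dependence of the output on the choice of $s$. This is precisely the role of the flipclass framework: different choices of $s$ (and, more generally, different descent-type decompositions of $[u,v]$) should be linked by elementary flips that preserve the $\widetilde{R}$-polynomial. One would therefore isolate an intrinsic, poset-defined notion of ``admissible flip'', verify algebraically that flips preserve $\widetilde{R}$-polynomials, and then deduce that the recursion produces a polynomial which is an invariant of the isomorphism class of $[u,v]$.

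The hard part, which really is the conjecture itself, is to prove that the flipclass equivalence is fine enough to detect poset isomorphism: for every isomorphism $\varphi\colon[u,v]\to[u',v']$ of Bruhat intervals, one needs a compatible sequence of flips on the two sides that synchronizes the two inductive recursions. Establishing this synchronization is expected to require a genuinely new structural understanding of Bruhat intervals, going significantly beyond the mod-$q^7$ (resp.\ mod-$q^8$ in type $A$) control obtained in the present paper; in particular, one must rule out the possibility that the flipclass of $[u,v]$ retains strictly more information than its poset structure. A secondary obstacle is to make the argument type-uniform across all Weyl groups (and ultimately all Coxeter groups), since outside type $A$ the flipclasses appear to be coarser and the induction may demand extra poset-theoretic input, for instance a combinatorial description of the automorphisms of Bruhat intervals. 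I regard the flip/isomorphism synchronization step as the main obstacle and the most likely locus for genuinely new ideas.
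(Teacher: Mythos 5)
This statement is Conjecture~\ref{comb-inv-con}, the Combinatorial Invariance Conjecture itself; the paper does not prove it and explicitly treats it as open, establishing only partial results (invariance of $\widetilde{R}$-polynomials modulo $q^7$ for Weyl groups and modulo $q^8$ in type $A$, hence the conjecture for intervals up to length $8$, resp.\ $10$). Your proposal is candid that the core step is ``really the conjecture itself,'' so there is no claim of a complete proof to assess; what can be assessed is whether your sketch matches the paper's actual strategy, and on one important point it does not.

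Your first reduction --- from $P_{u,v}$ to the $\widetilde{R}$-polynomials --- is exactly the paper's starting point (the equivalence of Conjecture~\ref{comb-inv-con} with Conjecture~\ref{comb-inv-con2}, via Dyer's result that the isomorphism type of $[u,v]$ determines $B([u,v])$). But your description of the flipclass machinery is off. You frame flips as relating different choices of a descent $s$ with $sv<v$ in the standard $\widetilde{R}$-recursion. In the paper, flips have nothing to do with descent recursions: a flip is a local operation replacing a length-$2$ subpath $u\to x\to v$ of a Bruhat-graph path by its partner $u\to f(x)\to v$ (Lemma~\ref{mari}, Definition~\ref{flipflip}), flipclasses are the orbits of $P_h(u,v)$ under these operations, and the route to combinatorial invariance is Dyer's path-counting formula $\widetilde{R}_{u,v}(q)=\sum q^{\ell(\Gamma)}$ over increasing paths, combined with the claim (``Flip Combinatorial Invariance'') that combinatorially isomorphic flipclasses contribute the same count $c(F)$ of increasing paths. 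There is no induction on $\ell(v)-\ell(u)$ via a chosen simple reflection, and no need to ``synchronize'' recursions across a poset isomorphism; the invariance is extracted coefficientwise from $[q^h]\widetilde{R}_{u,v}=\sum_F c(F)$ and the fact that the multiset of combinatorial isomorphism classes of $h$-flipclasses is a poset invariant (Remark~\ref{costante}, Theorem~\ref{FCICimpliesCIC}). If you want your sketch to reflect the paper's program, the open problem to isolate is not a flip/isomorphism synchronization of recursions but precisely Conjecture~\ref{flipclasscongettura}: that $c(F)$ depends only on the combinatorial isomorphism type of $F$, for all flipclasses of Weyl type (and, for full generality, beyond --- noting the paper's warning that this already fails in $H_3$ and $H_4$).
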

\noindent 
The Combinatorial Invariance Conjecture for Kazhdan--Lusztig polynomials is equivalent to the Combinatorial Invariance Conjecture for $\widetilde{R}$-polynomials  (see Conjecture~\ref{comb-inv-con2}). Given $u,v \in W$, the $\widetilde{R}$-polynomial $\widetilde{R}_{u,v}$ counts by length the paths from $u$ to $v$ in the Bruhat graph whose labels are increasing with respect to any fixed reflection ordering (see Theorem~\ref{Dyertilde}). 

This challenging problem has been the subject of extensive research but it is still widely open. The conjecture has been established in several special cases, among which: 
 intervals of rank $\leq 4$ (see, e.g., \cite[7.31]{Dyeth}, and \cite[Chapter~5, Exercises~7 and~8]{BB}), intervals of rank $\leq 8$ in type $A$ and $\leq 6$ in types $B$ and $D$ (\cite{Inc}), 
   intervals in type $\tilde{A}_2$ (\cite{BLP}), intervals which are lattices (\cite[7.23]{Dyeth}, \cite{Bre94}),  and intervals starting from the identity  (\cite{BCM1}).
Also a parabolic version of  Conjecture \ref{comb-inv-con} holds for intervals starting from the identity (\cite{Mtrans}, \cite{M}).
On the other hand,  certain entirely poset-theoretical generalizations of Conjecture \ref{comb-inv-con} fail (\cite{BCM2}, \cite{MJaco}).

Recently, fresh ideas  to attack the  Combinatorial Invariance Conjecture have been found  in \cite{BBDVW}  with the help of certain machine learning models (see also \cite{DVBBZTTBBJLWHK}). These ideas generated  several new studies (see \cite{B-G}, \cite{B-M}, \cite{EM2}, \cite{G-W}). 

\bigskip
In \cite{EM}, a new approach has been proposed for the Combinatorial Invariance Conjecture for the symmetric group.
The underlying idea is that intervals should be further decomposed into smaller entities, viz. the flipclasses, and that the  Combinatorial Invariance Conjecture for  $\widetilde{R}$-polynomials in fact holds at the deeper level of  flipclasses. 
In this work, we extend this line of research to all Coxeter groups.

Let $W$ be an arbitrary Coxeter group, and denote  its Bruhat graph by $B(W)$. For each $u$ and $v$  in $W$, with $u\leq v$ in Bruhat order, and each $h \in \mathbb N$, we partition the set  of paths from $u$ to $v$ of length $h$ in $B(W)$ into smaller sets, the {\em $h$-flipclasses} from $u$ to $v$, and we introduce a notion of combinatorial isomorphism of flipclasses.
Since the  multiset of the combinatorial isomorphism classes of $h$-flipclasses of an interval depends only on the interval as a poset, 
one can tackle the Combinatorial Invariance Conjecture by investigating to which extent (i.e., for which set $\mathcal F$ of flipclasses in possibly different Coxeter groups) the following assertion holds.

\noindent{\bf Flip Combinatorial Invariance}.
Let $F$ and $F'$  be two combinatorially isomorphic flipclasses belonging to  $\mathcal F$. Then  the number of increasing paths in $F$ and the number of increasing paths in $F'$ do not depend on the chosen reflection orderings, and these two numbers coincide.

Given the set $\mathcal F$ of all flipclasses in a class $\mathcal W$ of Coxeter groups,  the  Flip Combinatorial Invariance for $\mathcal F$ implies  the Combinatorial Invariance Conjecture for the Kazhdan--Lusztig polynomials of the Coxeter groups in $\mathcal W$.

The following theorem collects our main results on flipclasses (see Definitions~\ref{defd} and~\ref{deffin} for the definitions of the flipclass types).

\begin{thm}
\label{colleziona}
$ $ 

\begin{enumerate}
\item[(i)] 
A flipclass of finite type has the same number of increasing paths with respect to any reflection ordering.

\item[(ii)]
\label{eccoci}
 Flip Combinatorial Invariance holds for
\begin{itemize}
\item dihedral flipclasses (of any length);
\item  $h$-flipclasses of Weyl type, for $h\leq 6$;
\item $h$-flipclasses of symmetric type, for $h\leq 7$.
\end{itemize}
\end{enumerate}
\end{thm}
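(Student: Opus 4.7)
The plan is to treat the two parts of the theorem separately, with part (i) serving as a foundational tool for part (ii).

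\textbf{Part (i): finite-type flipclasses.} For a flipclass of finite type, the supporting reflections together with the endpoints should embed the flipclass into a Bruhat interval of a finite Coxeter group, where Dyer's theorem (Theorem~\ref{Dyertilde}) guarantees reflection-ordering independence of the total $\widetilde{R}$-polynomial. To upgrade this global invariance to the flipclass level, I would show that any two reflection orderings of a finite Coxeter system are connected by a sequence of local ``dihedral reversals'' on rank-two standard parabolics, and then verify that each such elementary move preserves the increasing-path count within every individual flipclass, since the flip equivalence relation is designed to be compatible with these moves.

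\textbf{Part (ii), dihedral and bounded-$h$ cases.} Dihedral flipclasses live in a rank-two parabolic, so the Bruhat graph reduces to two chains meeting at top and bottom, and every reflection ordering is, up to the obvious symmetry, one of two types; a direct computation then shows that each dihedral flipclass has the same number of increasing paths under any ordering, with the count determined by the combinatorial isomorphism class. For the bounded-$h$ items, the enabling step is a bounded-rank statement: an $h$-flipclass is supported on a bounded number of reflections (roughly $h$), so its combinatorial type is realized inside a parabolic subsystem of rank at most that bound. This makes the space of combinatorial isomorphism types of $h$-flipclasses up to the stated thresholds finite and explicitly enumerable, with bounded-rank Weyl groups providing the models for $h\leq 6$ and symmetric groups $S_k$ for bounded $k$ providing the models for $h\leq 7$. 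One then lists the isomorphism types and, using part (i) wherever applicable, checks that the increasing-path counts match across each class.

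\textbf{Main obstacle.} The hardest step will be the enumeration at $h=6$ (Weyl) and $h=7$ (symmetric), where the combinatorial types proliferate. Surmounting this requires both a sharp bounded-rank reduction theorem and an effective, likely computer-assisted, procedure to list flipclass isomorphism types and compare their increasing-path counts under representative reflection orderings. The non-symmetric Weyl cases---including classical types $B$, $D$, and the small exceptionals---will require broader case analysis than the type $A$ setting handled in earlier work \cite{EM}, and ensuring that the reduction to parabolics is compatible with the ``infinite-type'' boundary behavior will likely be the most delicate technical point.
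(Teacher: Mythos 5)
Your overall architecture matches the paper's: part (i) is proved by connecting reflection orderings via local dihedral reversals (in the paper, braid moves on reduced words of $w_0$, which reverse the order on a conjugate $\sigma T_{\{s,r\}}\sigma^{-1}$ of the reflections of a rank-two standard parabolic) and swapping each offending subpath for its opposite path within its dihedral flipclass (Proposition~\ref{oradipranzo}), which is exactly the bijection in Theorem~\ref{prec=prec}; the dihedral case of (ii) is likewise Proposition~\ref{oradipranzo} plus the uniqueness of the combinatorial type (Lemma~\ref{unasola}). Two points where you diverge, one of which needs repair. First, the bounded-rank reduction does not go through parabolic subsystems: the group $W(F)$ generated by $T(F)$ is a \emph{reflection subgroup}, generally not parabolic, and the embedding of $F$ into it uses Dyer's isomorphism of Bruhat graphs on cosets (Theorem~\ref{DyeComp1.thm1.4}, Lemma~\ref{FeF'}); the bound $\operatorname{rk} W(F)\le h$ is then a genuine theorem requiring positive-definiteness of the form together with Dyer's characterization of canonical generators (Lemma~\ref{algebretta}, Proposition~\ref{rango<h}), and it \emph{fails} for infinite Coxeter groups. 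A reduction phrased via parabolics would miss flipclasses such as the one in $B_2$ supported on a non-parabolic $A_1\times A_1$. Second, your plan to enumerate combinatorial isomorphism types and compare counts directly is correct in principle but not tractable: at $h=6$ the group $B_6$ alone contributes on the order of $10^8$ flipclasses, and pairwise isomorphism testing is out of reach. The paper instead introduces the valence polynomial $\D(F)$, a flip combinatorial invariant that is multiplicative under the product of flipclasses, invariant under (anti-)isomorphism, and cheap to compute; the computer check is that $\D$ refines $c$ (Theorem~\ref{flow}), which implies the claim without ever testing combinatorial isomorphism. This substitution, together with the $w_0$-(anti-)isomorphisms of Lemma~\ref{lem:(anti)-iso} and the unique factorization of Corollary~\ref{cor:unique factorization}, is what makes the $h=6$ and $h=7$ cases feasible.
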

Flip Combinatorial Invariance does not hold for flipclasses in the finite non-Weyl Coxeter groups $H_3$ and $H_4$.

Let us briefly outline the steps of the proof of the statements of Theorem~\ref{colleziona} regarding  flipclasses of Weyl and symmetric types, which  is computer-assisted. First, we use the results in Section~6  to reduce the analysis to a finite number of cases. Then, we find a fine enough invariant of flipclasses, the valence polynomial: checking whether flipclasses are combinatorially isomorphic is computationally heavier than checking equality of valence polynomials. Finally, we use some isomorphisms and anti-isomorphisms of flipclasses to reduce to an amount of cases  that is tractable by  computer. The details on the computations performed by  computer are in Section~8.

The following statements collect the consequences of our results on the Combinatorial Invariance Conjecture.
\begin{cor}
\label{combinatoriainvarianzaintro}
$ $

\begin{enumerate}
\item[(i)]
\label{pressione}
Let $[u_1,v_1]$ and   $[u_2,v_2]$ be two isomorphic intervals (of any length) in two Weyl groups. Then    
 $$\widetilde{R}_{u_1,v_1} \equiv  \widetilde{R}_{u_2,v_2}  \pmod {q^7}.$$
If, moreover, one restricts to Weyl groups of type $A$, then  
$$\widetilde{R}_{u_1,v_1} \equiv  \widetilde{R}_{u_2,v_2}  \pmod {q^8}.$$
\item[(ii)]
The Combinatorial Invariance Conjecture holds for all intervals 
\begin{itemize}
\item up to length 8, for the set of Weyl groups, 
\item
up to length 10, for the set of Weyl groups of type $A$.
\end{itemize}
\end{enumerate}
\end{cor}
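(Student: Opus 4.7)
Part~(i) is immediate from Theorem~\ref{colleziona}(ii) combined with the path interpretation of $\widetilde{R}$-polynomials (Theorem~\ref{Dyertilde}), which writes the coefficient of $q^h$ in $\widetilde{R}_{u,v}(q)$ as the number of increasing paths of length $h$ from $u$ to $v$ in the Bruhat graph, for any fixed reflection ordering. The flipclass framework of the paper partitions these paths into $h$-flipclasses whose multiset of combinatorial isomorphism types is determined by the interval as a poset; Theorem~\ref{colleziona}(ii) then gives that, for $h\leq 6$ (Weyl type) and $h\leq 7$ (symmetric type), the count of increasing paths in each individual flipclass is a combinatorial invariant. Summing over flipclasses produces the stated congruences modulo $q^7$ and $q^8$.

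For part~(ii) the plan is to upgrade the modular invariance of part~(i) to full equality of $\widetilde{R}$-polynomials, from which the Combinatorial Invariance Conjecture for Kazhdan--Lusztig polynomials follows via the equivalence recalled in the introduction. Two auxiliary facts are needed: (a) the leading coefficient of $\widetilde{R}_{u,v}$ is $1$, by EL-shellability of Bruhat intervals; and (b) every coefficient of $q^h$ with $h\not\equiv \ell(v)-\ell(u)\pmod{2}$ vanishes, since each edge of the Bruhat graph changes length by an odd amount. Together with part~(i), these facts determine $\widetilde{R}_{u,v}$ completely for every Weyl-group interval of length at most $8$ and for every type-$A$ interval of length at most $9$: the coefficients left unknown after part~(i) are either identified with the leading~$1$ or forced to~$0$ by parity.

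The remaining case is a length-$10$ interval $[u,v]$ in type~$A$, where the coefficient $c_8$ of $q^8$ in $\widetilde{R}_{u,v}$ is not yet controlled. The plan here is to induct on the interval length, assuming CI for all intervals of length at most $9$ (just established). Using the Kazhdan--Lusztig inversion formula
\[
\sum_{u\leq x\leq v}(-1)^{\ell(x)-\ell(u)}R_{u,x}(q)\, P_{x,v}(q)=0 \qquad (u<v)
\]
together with $\deg P_{u,v}<5$, one reads off that the coefficients of $q^5,q^6,\ldots,q^{10}$ in $R_{u,v}(q)$ coincide with the corresponding coefficients of $-\sum_{u<x<v}(-1)^{\ell(x)-\ell(u)}R_{u,x}P_{x,v}$, and are therefore combinatorial invariants by the inductive hypothesis. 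Translating the invariance of the $q^5$-coefficient of $R_{u,v}(q)$ through the change of variable $R_{u,v}(q)=q^{5}\widetilde{R}_{u,v}(q^{1/2}-q^{-1/2})$ yields an explicit linear equation relating $c_2,c_4,c_6,c_8,c_{10}$, in which $c_8$ appears with coefficient $(-1)^4\binom{8}{4}=70\ne 0$; solving for $c_8$ exhibits it as a combinatorial invariant and completes the determination of $\widetilde{R}_{u,v}$.

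The main obstacle is the bookkeeping of this inductive step: one must confirm that every sub-interval appearing in the inversion formula does fall within the preceding cases of the corollary (each has length at most $9$) and carry out the binomial computation that yields the nonvanishing coefficient $70$ of $c_8$. Once these are in place, the argument closes cleanly.
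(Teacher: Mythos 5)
Your part (i), and the reduction of part (ii) to the single unknown coefficient $c_8:=[q^8]\widetilde{R}_{u,v}$ of a length-$10$ type-$A$ interval, follow the paper exactly: monicity, the vanishing of $[q^s]\widetilde{R}_{u,v}$ for $s\not\equiv\ell(v)-\ell(u)\pmod 2$, and the congruences of part (i) settle Weyl intervals of length $\leq 8$ and type-$A$ intervals of length $\leq 9$. For the last step you take a genuinely different route from the paper, but the identity you invoke is false as written: already for a covering pair $u\lessdot v$,
\[
\sum_{u\leq x\leq v}(-1)^{\ell(x)-\ell(u)}R_{u,x}(q)\,P_{x,v}(q)=1-(q-1)=2-q\neq 0 .
\]
What you need instead is the defining recursion $q^{\ell(v)-\ell(u)}P_{u,v}(q^{-1})=\sum_{u\leq x\leq v}R_{u,x}(q)P_{x,v}(q)$. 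Since $\deg P_{u,v}\leq 4$ for a length-$10$ interval, neither $q^{10}P_{u,v}(q^{-1})$ nor the term $P_{u,v}(q)$ (coming from $x=u$) contributes to the coefficient of $q^5$, so $[q^5]R_{u,v}=-[q^5]\sum_{u<x<v}R_{u,x}P_{x,v}$ is a combinatorial invariant by the length-$\leq 9$ case; your expansion $[q^5]R_{u,v}=\sum_{j}(-1)^j\binom{2j}{j}c_{2j}$, in which $c_8$ enters with coefficient $\binom{8}{4}=70\neq 0$, then correctly recovers $c_8$. With that repair your argument closes.

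The paper's own argument for the length-$10$ case is considerably shorter: it uses the quadratic relation $\sum_{z\in[u,v]}(\pm 1)\,\widetilde{R}_{u,z}\widetilde{R}_{z,v}=0$ directly on $\widetilde{R}$-polynomials. For an interval of even length the two extreme terms $z=u$ and $z=v$ carry the same sign, so $2\widetilde{R}_{u,v}=-\sum_{u<z<v}(\pm 1)\widetilde{R}_{u,z}\widetilde{R}_{z,v}$ is determined by proper subintervals all at once; this is the general fact that combinatorial invariance up to an odd length $k$ implies it at length $k+1$, with no passage through the $P$-polynomials, no degree bound, and no extraction of a single coefficient.
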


Based on our investigation, we believe that the following conjecture might be true. 
\begin{con}
Flip Combinatorial Invariance holds for  flipclasses of Weyl type.
\end{con}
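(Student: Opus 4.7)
The plan is to upgrade the finite, computer-assisted verification used for $h\leq 6$ in Theorem~\ref{colleziona} to a uniform structural argument valid for all $h$. My proposal proceeds by induction on $h$ with the already established cases as the base, supplemented by two ingredients: a rank reduction for Weyl groups and a hierarchy of combinatorial invariants refining the valence polynomial.

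First, I would prove a \emph{rank reduction lemma}: every $h$-flipclass of Weyl type is combinatorially isomorphic, in a way that preserves the counts of increasing paths with respect to any reflection ordering, to a flipclass living in a parabolic subgroup of some Weyl group whose rank is bounded by an explicit function of $h$. The idea is that at most $h$ reflections appear along any path of an $h$-flipclass, so the roots involved span a subspace of dimension $O(h)$, and standard facts about root subsystems allow one to descend to a sub-Weyl group of controlled type. Combined with the reductions of Section~6, this would place the problem in a finite combinatorial arena for each fixed $h$.

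Next, I would set up the induction on $h$. Given an $h$-flipclass $F$, consider the partition of $F$ induced by fixing the first (or last) edge of each path. Each block is naturally an $(h-1)$-flipclass, to which the inductive hypothesis applies. In particular, the dihedral bullet of Theorem~\ref{colleziona} should provide the right ``atomic'' building blocks. The total number of increasing paths in $F$ is then determined by the multiset of isomorphism types of these blocks together with the data of their attachments. The key lemma to establish is that this refined datum is itself a combinatorial invariant of $F$: starting from the valence polynomial, one builds a hierarchy of invariants whose leaves are the invariants used at lower lengths.

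The hard part, in my view, will be precisely this last point: the combinatorial-isomorphism invariance of the induced decomposition. Different reflection orderings yield a priori different partitions into subflipclasses, and reconciling these requires understanding how braid moves among reflection orderings act on the full hierarchy of invariants, a question intertwined with the very content of the Combinatorial Invariance Conjecture. A natural intermediate milestone would be to show that the valence polynomial of any Weyl-type flipclass is independent of the chosen reflection ordering; this would already yield the first unconditional infinite family of new cases and, I expect, expose the geometric mechanism (rooted in the linear structure of root systems and absent in $H_3$ and $H_4$) that makes the full conjecture hold for Weyl groups specifically.
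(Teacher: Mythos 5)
This statement is Conjecture~\ref{flipclasscongettura}: the paper does not prove it, and explicitly presents it as open. The only established cases are $h\leq 6$ (Weyl type) and $h\leq 7$ (symmetric type), obtained by the finite, computer-assisted check of Theorem~\ref{flow}. So there is no proof in the paper to compare yours against, and your proposal, which is a research plan rather than an argument, does not close the gap.

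Concretely: your ``rank reduction lemma'' is already Theorem~\ref{thm:reduction to finite} (via Lemma~\ref{phirango<h} and Proposition~\ref{rango<h}); it reduces each fixed $h$ to finitely many cases but gives no uniformity in $h$, which is exactly why the paper stops at $h=6$. Your inductive step does not close. Fixing the first edge of each path in $F$ does yield a set of length-$(h-1)$ truncations closed under the remaining flips, but $c(F)$ is not a function of the $c$-values of these blocks: a path of $F$ is increasing only if its first label precedes the second, so the count couples the choice of first edge with \emph{which} increasing path of the block extends it, and you define no combinatorially invariant ``attachment datum'' that controls this coupling. This is not a technicality; it is the entire content of the conjecture, as you partly acknowledge. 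Finally, your proposed intermediate milestone is misstated: the valence polynomial is computed from the time-support poset and involves no reflection ordering, so its independence from the ordering is vacuous, while the independence of $c(F)$ from the reflection ordering is already Proposition~\ref{prec=precfinitetype}. What must be shown is that $c(F)$ depends only on the combinatorial isomorphism class of $F$ (or is determined by $\D(F)$), and nothing in the proposal supplies a mechanism for that beyond the finite verification the paper already performs.
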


 \medskip
 The paper is organized as follows. In Section~2, we review the preliminary notions that we use in the paper. In Section~3, we set up the theory of flipclasses in arbitrary Coxeter groups. In Section~4, we completely solve the case  of dihedral flipclasses. In Section~5, we prove that a flipclass in a finite Coxeter group has the same number of increasing paths with respect to any reflection ordering. In Section~6, for each $h$, we show how to reduce the study of  $h$-flipclasses of intervals (of any length) in finite Coxeter groups to a finite number of cases.   In Section~7, we present our results concerning Flip Combinatorial Invariance.    In Section~8, we give the details of the proof of Theorem~\ref{flow}.

\section{Preliminaries}

In this section, we  review some background material. 

For $n\in \mathbb N^+$, we let $[n]=\{1,2,\ldots,n\}$, and  $[0,n]=\{0,1,\ldots,n\}$.

\subsection{Coxeter groups}
We fix our notation on a Coxeter group $(W,S)$ in the following list:
\smallskip 

$
\begin{array}{@{\hskip-1.3pt}l@{\qquad}l}
e &  \textrm{the identity of $W$}, 
\\
\ell  &  \textrm{the length function of $W$ with respect to $S$},
\\
w_0 &  \textrm{the longest   element of $W$},
\\
T &  \textrm{the set of  reflections of $W$, i.e. } \{ w s w ^{-1} : w \in W, \; s \in S \}, 
\\
\Phi &  \textrm{the set of roots of $W$},
\\
\Phi^+ &  \textrm{the set of positive roots of $W$},
\\
t_{\alpha}  &  \textrm{the reflection corresponding to the root $\alpha$},
\\
\alpha_t  &  \textrm{the positive root corresponding to the reflection $t$},
\\
\leq & \textrm{the Bruhat order on $W$ (as well as usual order on $\mathbb R$)},
\\
\textrm{$[u,v]$} &  \textrm{the (Bruhat) interval generated by $u,v\in W$, i.e. } \{ w \in W \, : \; u \leq w \leq v \},
\\
B(W)  &  \textrm{the  Bruhat graph  of $W$},
\\
B(X)  &  \textrm{the (directed) graph induced on the subset $X$ by the Bruhat graph  $B(W)$},
\\
P_h(u,v) & \textrm{the set of paths of length $h$ from $u$ to $v$ in $B(W)$}.
\end{array}$
\bigskip

We refer the reader to  \cite [Chapter 4]{BB} for the basic facts regarding the standard geometric representation of $W$. Recall that the sets of reflections $T$ and of positive roots $\Phi^+$ are in natural bijection: for short, we  denote  the reflection corresponding to the root $\alpha$ by $t_{\alpha}$ and the root corresponding to the reflection $t$ by $\alpha_t$.

The \emph{Bruhat graph} of $W$, denoted $B(W)$, is the edge-labelled directed graph whose vertex set is $W$ and where  $u \xrightarrow{t} v$ if and only if $\ell(u)<\ell(v)$ and $vu^{-1}=t \in T$. The \emph{Bruhat order} is the partial order on $W$ where $u \leq v$ whenever there is a (directed) path from $u$ to $v$ in $B(W)$.

 Let $u,v \in W$, $u \leq v$, and  $\Gamma= (u=x_0 \stackrel{t_1}{\longrightarrow} x_1 \stackrel{t_2}{\longrightarrow}  \cdots \stackrel{t_{h-1}}{\longrightarrow} x_{h-1}\stackrel{t_h}{\longrightarrow}  x_h=v)$ be a path from $u$ to $v$ in $B(W)$. We define
  $h$ to be the {\em length} of $\Gamma$, denoted $\ell(\Gamma)$. 
 For short, we say  $x_i \in \Gamma$, for all $i\in [0,h]$, as well as   $x_i \stackrel{t_{i+1}}{\longrightarrow} x_{i+1} \in \Gamma$,  for all $i\in [0,h-1]$.

The next result follows from (in fact, is proven in the proof of) \cite[Lemma 3.1]{DyeComp1}.
\begin{pro}
\label{stessopiano}
Let $u,v \in W$. Let $\Gamma_1 = \big( u\stackrel{t_1}{\longrightarrow}  x_1\stackrel{r_1}{\longrightarrow}  v \big)$ and $\Gamma_2 = \big( u\stackrel{t_2}{\longrightarrow}  x_2\stackrel{r_2}{\longrightarrow}  v\big)$ be two paths of length 2 from $u$ to $v$. Then $\operatorname{span}\{\alpha_{t_1}, \alpha_{r_1}\}=\operatorname{span}\{\alpha_{t_2}, \alpha_{r_2}\}=\operatorname{span}\{\alpha_{t_1}, \alpha_{t_2}\}=\operatorname{span}\{\alpha_{r_1}, \alpha_{r_2}\}$.
\end{pro}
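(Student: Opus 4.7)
The plan is to exploit the identity $r_1 t_1 = r_2 t_2$, which follows immediately from writing $v$ in two ways: $v = r_1 x_1 = r_1 t_1 u$ and $v = r_2 x_2 = r_2 t_2 u$, then cancelling $u$ on the right. Since reflections are involutions, this is also equivalent to $t_2 t_1 = r_2 r_1$, and this common element lies in the subgroup $W' := \langle t_1, r_1 \rangle$. Multiplying on the right by $r_1$ and by $t_1$ respectively then gives $r_2 = (r_2 r_1) r_1 \in W'$ and $t_2 = (t_2 t_1) t_1 \in W'$ as well.

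Next I would invoke the theory of reflection subgroups of Deodhar--Dyer: the subgroup $W'$, being generated by two distinct reflections (distinct because a length-$2$ path rules out $t_1 = r_1$, which would force $u = v$), is a dihedral Coxeter group whose reflection set is exactly the set of $t \in T$ whose positive root $\alpha_t$ lies in the $2$-dimensional subspace $V := \operatorname{span}\{\alpha_{t_1}, \alpha_{r_1}\}$. Applied to $t_2, r_2 \in W'$, this yields $\alpha_{t_2}, \alpha_{r_2} \in V$, hence $\operatorname{span}\{\alpha_{t_2}, \alpha_{r_2}\} \subseteq V$; the opposite inclusion holds because $t_2 \neq r_2$ (again by positivity of path length), so $\alpha_{t_2}$ and $\alpha_{r_2}$ are linearly independent. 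The remaining two identities $\operatorname{span}\{\alpha_{t_1}, \alpha_{t_2}\} = V$ and $\operatorname{span}\{\alpha_{r_1}, \alpha_{r_2}\} = V$ follow in the same way, once one observes that if $t_1 = t_2$ then $x_1 = x_2$, forcing $r_1 = r_2$ and reducing to the trivial situation $\Gamma_1 = \Gamma_2$; likewise for $r_1 = r_2$.

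The argument is entirely formal and uniform across all Coxeter groups; no case analysis by type is needed. The only delicate point, and the one I expect to be the main obstacle in a self-contained exposition, is justifying the characterization of the reflections of $W'$ as those $t \in T$ with $\alpha_t \in V$: this is exactly the content of the Dyer--Deodhar classification of reflection subgroups, and in a streamlined write-up one would simply cite it, as is done in the paper via the reference to \cite{DyerComp1}. Beyond this invocation, the proof reduces to the elementary manipulations with the identity $r_1 t_1 = r_2 t_2$ sketched above.
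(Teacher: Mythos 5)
The central step of your argument is false, and the paper itself contains a counterexample. You assert that the common element $t_2t_1=r_2r_1$ lies in $W'=\langle t_1,r_1\rangle$ and deduce $t_2,r_2\in W'$; but nothing in the identity $r_1t_1=r_2t_2$ forces this, and it can fail. Take $W$ of type $B_2$ with generators $s,r$ and the two paths $e \stackrel{s}{\longrightarrow} s \stackrel{rsr}{\longrightarrow} w_0$ and $e \stackrel{r}{\longrightarrow} r \stackrel{srs}{\longrightarrow} w_0$ (exactly the paths of Remark~\ref{nonsempreuguali}). Here $t_1=s$ and $r_1=rsr$ commute, so $\langle t_1,r_1\rangle=\{e,s,rsr,w_0\}$ is a reflection subgroup of type $A_1\times A_1$ containing neither $t_2=r$ nor $r_2=srs$, even though $r_1t_1=rsrs=w_0=srsr=r_2t_2$. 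The same example refutes your auxiliary claim that the reflection set of $\langle t_1,r_1\rangle$ is all of $\{t\in T:\alpha_t\in V\}$: that set describes the \emph{maximal} dihedral reflection subgroup $\langle t_\alpha:\alpha\in\Phi\cap V\rangle$, which may strictly contain $\langle t_1,r_1\rangle$ (here it is all of $B_2$). The conclusion of the proposition does hold in this example, since $B_2$ has rank $2$; what breaks is your route to it.

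What is actually true --- and is the content of \cite[Lemma~3.1]{DyeComp1} --- is that all reflections labelling edges of length-$2$ paths from $u$ to $v$ lie in a single dihedral reflection subgroup, equivalently that all their roots lie in one plane. This cannot be extracted from the group identity $r_1t_1=r_2t_2$ alone: your argument never uses the Bruhat conditions $\ell(u)<\ell(x_i)<\ell(v)$, which are essential to Dyer's proof. So you have mislocated the difficulty: the fact you propose to cite (the description of the reflections of a maximal dihedral subgroup attached to a plane) is not what is needed; what must be cited or reproved is Dyer's lemma itself, which is precisely what the paper does. A possible purely linear-algebraic salvage, comparing $\operatorname{im}(r_1t_1-\operatorname{id})$ with $\operatorname{im}(r_2t_2-\operatorname{id})$, also fails in general, since this image drops to dimension $1$ when $(\alpha_{t_i},\alpha_{r_i})=-1$, a case that occurs in infinite Coxeter groups.
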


\subsection{Reflection subgroups and reflection orderings}
The fundamental results on reflection subgroups have been established by Dyer  in several papers (e.g., \cite{DyeJAlg} and  \cite{DyeComp1}).
Let $W'$ be any reflection subgroup of a Coxeter group $W$, i.e., $W'= \langle W' \cap T\rangle$. Then $W'$ is itself a Coxeter group with the set $\{ t \in T : \{r\in T \cap W' : \ell(rt)<\ell(t) \}=\{t\}  \}$, denoted $\chi(W')$, as a set of Coxeter generators  (see \cite{DyeJAlg}). 

Recall (see \cite[\S 2]{DyeComp}) that a {\em reflection ordering} on $(W,S)$ is a total order $\preceq$ on $T$ such that if $(W',\chi(W'))$
is a dihedral reflection subgroup of $W$ (i.e. $\chi(W')$
has cardinality $2$) then either
\[
a \preceq aba \preceq ababa \preceq \cdots \preceq babab \preceq bab \preceq b
\]
or 
\[
b \preceq bab \preceq babab \preceq \cdots \preceq ababa \preceq aba \preceq a
\]
where $\{ a,b \}= \chi(W')$ and the expressions appearing above are reduced and exhaust the reflections in $W'$.

As noted in \cite[Remark 2.4]{DyeComp}, given a reflection subsystem $(W',\chi(W'))$ of a Coxeter system $(W,S)$, the restriction of a reflection ordering on $W$ to an order on $W'\cap T$ is a reflection ordering on $W'$. 
By abuse of language, we refer to the Coxeter system $(W',  \chi(W'))$ simply as the Coxeter subgroup $W'$. 

Given a reflection ordering $\preceq $,  a  path $\Gamma= \big(x_0 \stackrel{t_1}{\longrightarrow} x_1 \stackrel{t_2}{\longrightarrow}  \cdots \stackrel{t_{h-1}}{\longrightarrow} x_{h-1}\stackrel{t_h}{\longrightarrow} x_h\big)$ in $B(W)$ is \emph{increasing with respect to $\preceq $} provided that $t_1 \preceq t_2\preceq \cdots \preceq t_h$.

We will use the following result several times (see \cite[Theorem~3.3 and Corollary~3.4]{DyeJAlg} and  \cite[Theorem~1.4]{DyeComp1}).
\begin{thm}
\label{DyeComp1.thm1.4}
Let $W'$ be a reflection subgroup of $W$. Let $C$ be a left coset of $W'$.  Then there is a unique edge labeling preserving isomorphism from the Bruhat graph $B(W')$ of $W'$ to the subgraph $B(C)$ of the Bruhat graph $B(W)$ of $W$. 
\end{thm}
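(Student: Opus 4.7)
The plan is to construct the claimed isomorphism explicitly and then argue uniqueness. Given the left coset $C$, I would take $x$ to be the unique element of minimum length in $C$ and define $\phi\colon B(W') \to B(C)$ on vertices by $\phi(w') = xw'$. This is manifestly a bijection of vertex sets, and for any edge $w'_1 \to w'_2$ in $B(W')$ with label $t \in T \cap W'$, one computes $\phi(w'_2)\phi(w'_1)^{-1} = xtx^{-1} \in T$, so the edge-label correspondence is conjugation by $x$, giving a bijection from the labels of $B(W')$ onto the labels appearing in $B(C)$.

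The heart of the argument is to check that $\phi$ preserves the direction of edges. Using the standard root-system criterion $\ell(tw) > \ell(w) \Leftrightarrow w^{-1}(\alpha_t) \in \Phi^+$ together with the identity $\alpha_{xtx^{-1}} = \pm x(\alpha_t)$, a direct computation shows that the $\ell_W$ inequality for $xw'_1, xw'_2$ coincides with the $\ell_{W'}$ inequality for $w'_1, w'_2$ precisely when $x(\alpha_t) \in \Phi^+$ for every $t \in T \cap W'$, i.e.\ when $x(\Phi^+_{W'}) \subseteq \Phi^+$. Here $\Phi^+_{W'} := \Phi^+ \cap \Phi_{W'}$ is the positive system induced on the root system of $W'$, and the required property is exactly the defining feature of the minimum length representative of a left coset. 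Once this is in hand, the desired equivalence follows since $(w'_1)^{-1}(\alpha_t)$ lies in $\Phi_{W'}$, and its sign with respect to $\Phi^+$ agrees with its sign with respect to $\Phi^+_{W'}$.

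For uniqueness, I would use that the undirected Bruhat graph $B(W')$ is connected: any edge-labelling preserving isomorphism is determined by the image of $e_{W'}$ together with the induced label bijection, because preservation of labels forces the image of each neighbour once the image of a single vertex is fixed, and connectivity propagates the map throughout $W'$. A short compatibility check then pins $\phi(e_{W'})$ to the single correct element of $C$.

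The main obstacles are: verifying the positive-root property of the minimum length coset representative (the analogue for general reflection subgroups of the classical property enjoyed by minimum length representatives of standard parabolic cosets), and carrying out the length-compatibility comparison above. Both are precisely the content of the reflection subgroup theorems of Dyer cited in the statement; everything else in the argument is essentially formal once these are granted.
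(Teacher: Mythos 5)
The paper offers no proof of this statement: it is imported wholesale from Dyer (\cite[Theorem~3.3 and Corollary~3.4]{DyeJAlg} and \cite[Theorem~1.4]{DyeComp1}), so there is no in-paper argument to measure yours against. Your outline is essentially the standard proof of Dyer's theorem, and you correctly isolate the two nontrivial inputs --- the unique minimal-length coset representative with its positive-root characterization, and the fact that $\Phi^+\cap\Phi_{W'}$ is the positive system of $(W',\chi(W'))$. Be aware, though, that these two inputs \emph{are} the cited results, so your argument is largely a reduction of the citation to itself; that is acceptable here, but it means the proposal is a routing of the statement through Dyer's structure theory rather than an independent proof.

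There is one concrete defect. With the paper's labelling convention ($u\xrightarrow{t}v$ iff $vu^{-1}=t$), your map $\phi(w')=xw'$ sends the edge $w'_1\xrightarrow{t}tw'_1$ of $B(W')$ to an edge of $B(C)$ labelled $xtx^{-1}$, exactly as you compute. That is an isomorphism of directed graphs equivariant for the bijection $t\mapsto xtx^{-1}$ on labels, but it is \emph{not} edge-labelling preserving in the sense the theorem asserts and the paper uses: in Lemma~\ref{espomariancona24} and Proposition~\ref{prec=precfinitetype} the isomorphism is used to transport increasing paths after restricting a reflection ordering of $W$ to $W'\cap T$, which requires corresponding edges to carry the \emph{same} reflection. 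Concretely, for $W=S_3$, $W'=\{e,s_1\}$, $x=s_2$, your map sends the edge labelled $s_1$ to the edge $s_2\to s_2s_1$, whose label is $s_2s_1s_2\neq s_1$. The literally label-preserving map is right multiplication $w'\mapsto w'x$ onto the coset $W'x$, for which $\phi(w'_2)\phi(w'_1)^{-1}=w'_2(w'_1)^{-1}=t$ on the nose; the positive-root computation then runs with $x^{-1}$ in place of $x$, and your uniqueness-by-connectivity step (which does require literal label preservation to force $\psi(tw')=t\,\psi(w')$ and hence $\psi(w')=w'\psi(e)$) goes through unchanged. The mismatch is partly inherited from the statement itself, whose phrase ``left coset'' sits awkwardly with the labelling convention $t=vu^{-1}$; but as written your construction does not produce the map the theorem claims.
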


\subsection{Kazhdan--Lusztig polynomials}
The Kazhdan--Lusztig polynomials $\{ P_{u,v}(q) \}_{u,v \in W}$ of $W$  and the Kazhdan--Lusztig $R$-polynomials $\{ R_{u,v}(q) \}_{u,v \in W} $ of $W$ are two  families of polynomials in one variable indexed by pairs of elements of a Coxeter group $W$. Kazhdan--Lusztig polynomials and $R$-polynomials are equivalent in the following sense:  given $u,v \in W$, one can reconstruct  the set $\{ P_{x,y} (q)\}_{x,y \in [u,v]}$ from the set $\{ R_{x,y}(q) \}_{x,y \in [u,v]}$, and vice versa. For the definition of  Kazhdan--Lusztig and $R$-polynomials, we refer the reader to \cite[Chapter~5]{BB}. In this work, we use a third family of polynomials in one variable indexed by pairs of element of $W$, which are the 
$\widetilde{R}$-polynomials. 
The  $\widetilde{R}$-polynomial $\widetilde{R}_{u,v}(q)$  is the unique polynomial with natural coefficients satisfying 
$$
R_{u,v}(q)= q^{\frac{\ell(v)-\ell(u)}{2}}\widetilde{R}_{u,v}(q^{\frac{1}{2}}-q^{-\frac{1}{2}}),
$$
(see, e.g., \cite[Proposition 5.3.1]{BB}).
The $\widetilde{R}$-polynomials can also be defined through the  following combinatorial interpretation of  Dyer (see \cite{DyeComp} and also \cite[Theorem~5.3.4]{BB}). 
\begin{thm}
\label{Dyertilde}
Fix a reflection ordering  $\preceq $.  For $u,v \in W$, we have
$$\widetilde{R}_{u,v}(q)=\sum q^{\ell(\Gamma)},$$
where the sum is over all increasing paths $\Gamma$ from $u$ to $v$.
\end{thm}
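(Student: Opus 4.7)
The plan is to show that the right-hand side, which I denote $F^{\preceq}_{u,v}(q) := \sum_{\Gamma} q^{\ell(\Gamma)}$ with the sum over increasing paths from $u$ to $v$, satisfies the same recursive characterization as the $\widetilde{R}$-polynomial, namely: $F^{\preceq}_{v,v}(q) = 1$; $F^{\preceq}_{u,v}(q) = 0$ whenever $u \not\leq v$; and, for every simple reflection $s \in S$ with $sv < v$, one has $F^{\preceq}_{u,v}(q) = F^{\preceq}_{su,sv}(q)$ if $su < u$, and $F^{\preceq}_{u,v}(q) = F^{\preceq}_{su,sv}(q) + q\, F^{\preceq}_{u,sv}(q)$ if $su > u$. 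Establishing these relations simultaneously yields that the sum is independent of the chosen reflection ordering and coincides with $\widetilde{R}_{u,v}(q)$, since the same relations characterize $\widetilde{R}$ via its definition through the $R$-polynomial recursion.

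The base cases are immediate: for $u=v$ the only increasing path is the trivial one of length zero, and for $u \not\leq v$ there are no paths at all. For the inductive step on $\ell(v)$, I would fix $s \in S$ with $sv < v$ and partition the increasing paths from $u$ to $v$ according to whether their final edge carries the label $s$ or not. The decisive tool is a bijection provided by left multiplication by $s$: via Theorem~\ref{DyeComp1.thm1.4} applied to a suitable reflection subgroup containing $s$, the map $x \mapsto sx$ induces an edge-labelled isomorphism between appropriately chosen subgraphs of $B(W)$. Under this correspondence, increasing paths from $u$ to $v$ match shorter paths terminating at $sv$; the extra factor of $q$ in the case $su > u$ accounts for the $s$-edge that gets appended or removed, while the case $su < u$ absorbs cleanly into a length-preserving match.

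The main obstacle is that this bijection does not automatically respect the condition of being increasing with respect to the given $\preceq$; rather, it respects it with respect to an ordering $\preceq'$ induced on the relevant subgraph after acting by $s$. One must verify that $\preceq'$ is again a reflection ordering, which reduces to checking the dihedral axiom recalled before Theorem~\ref{DyeComp1.thm1.4}, and then invoke the inductive hypothesis. This forces the inductive statement to hold for \emph{arbitrary} reflection orderings rather than only the one fixed at the start: the invariance under the ordering and the recursion for $\widetilde{R}$ must therefore be proved in tandem. The compatibility needed to carry this out ultimately rests on the rank-$2$ control provided by Proposition~\ref{stessopiano}, which ensures that any local discrepancy between two reflection orderings is confined to a single dihedral reflection subgroup, where the defining axiom of a reflection ordering can be invoked directly to match the counts.
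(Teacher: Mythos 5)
The paper does not contain its own proof of Theorem~\ref{Dyertilde}: this is Dyer's combinatorial interpretation of the $\widetilde{R}$-polynomials, quoted from \cite{DyeComp} and \cite[Theorem~5.3.4]{BB}, so there is no internal argument to compare yours against. Your overall strategy --- showing that the generating function of increasing paths satisfies the defining recursion of the $\widetilde{R}$-polynomials, with the recursion in $s$ forcing you to vary the reflection ordering and hence to prove order-independence in tandem --- is indeed the shape of the argument in \cite{BB}. As written, however, the proposal has a genuine gap at its central step.

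The ``decisive tool'' you invoke, a bijection induced by left multiplication by $s$, does not exist: the map $x\mapsto sx$ is not an automorphism of the Bruhat graph and does not send directed paths to directed paths. Concretely, in type $A_2$ with $s=s_1$, $u=s_1$, $v=s_1s_2s_1$ (so $su<u$ and $sv<v$, your ``clean length-preserving'' case), the path $s_1\rightarrow s_2s_1\rightarrow s_1s_2s_1$ is sent to the vertex sequence $e,\,s_1s_2s_1,\,s_2s_1$, whose lengths are $0,3,2$ --- not a path. Only multiplication by $w_0$ behaves well globally (Lemma~\ref{lem:(anti)-iso}). The correct argument replaces this by a piecewise map built from the lifting property: each vertex $x_i$ is sent to $x_i$ or to $sx_i$ according to the sign of $\ell(sx_i)-\ell(x_i)$, and the case analysis on where the path crosses the wall of $s$ is precisely what produces the two terms $F_{su,sv}$ and $q\,F_{u,sv}$. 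Your appeal to Theorem~\ref{DyeComp1.thm1.4} does not supply this either, since that result compares $B(W')$ with a single coset $B(C)$ inside $B(W)$, not two distinct cosets of $\langle s\rangle$ with one another. Finally, the independence of the count from the reflection ordering is itself a substantive statement (the paper proves it for finite $W$ in Theorem~\ref{prec=prec} via braid moves on reduced words of $w_0$, resting on the dihedral exchange of Lemma~\ref{espomariancona24} and Proposition~\ref{oradipranzo}); Proposition~\ref{stessopiano}, which only asserts that the four roots of two length-$2$ paths span a common plane, does not by itself confine or resolve the discrepancy between two reflection orderings. Until the actual bijection and the order-independence are supplied, the induction does not close.
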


A classical result of Dyer (see  \cite[Proposition~3.3]{DyeComp1}) asserts that, given $u,v\in W$, the isomorphism type of the directed graph $B([u,v])$ is determined by the isomorphism type  of the  Bruhat interval $[u,v]$ as a poset.  Hence, the Combinatorial Invariance Conjecture is equivalent to the following one.

\begin{con}
\label{comb-inv-con2}
The  $\widetilde{R}$-polynomial $\widetilde{R}_{u,v}(q)$  depends only on the isomorphism type of the graph $B([u,v])$.
\end{con}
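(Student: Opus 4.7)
The plan is to attack Conjecture \ref{comb-inv-con2} along the flipclass route proposed in this paper. By Theorem \ref{Dyertilde}, for any reflection ordering $\preceq$ one has $\widetilde{R}_{u,v}(q)=\sum_{h\geq 0}N_h^{\preceq}(u,v)\,q^h$, where $N_h^{\preceq}(u,v)$ denotes the number of paths in $P_h(u,v)$ that are increasing with respect to $\preceq$. It therefore suffices to prove, for every $h$, that $N_h^{\preceq}(u,v)$ is independent of $\preceq$ and depends only on the isomorphism type of $B([u,v])$.

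I would first refine the counting using the $h$-flipclass partition of $P_h(u,v)$ introduced in Section~3, writing $N_h^{\preceq}(u,v)=\sum_F N_h^{\preceq}(F)$ with $F$ ranging over the $h$-flipclasses of $[u,v]$. Because a flip is a length-$2$ subpath replacement with fixed endpoints, the entire flipclass partition is determined by the Bruhat graph, so any isomorphism $\varphi\colon B([u_1,v_1])\to B([u_2,v_2])$ induces a bijection between the flipclass sets that preserves combinatorial isomorphism of flipclasses. The second step is to invoke \emph{Flip Combinatorial Invariance} at the level of the full family of flipclasses: $N_h^{\preceq}(F)$ is independent of $\preceq$ and coincides on any two combinatorially isomorphic flipclasses. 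Summing over the bijection induced by $\varphi$ then gives the isomorphism invariance of $N_h^{\preceq}(u,v)$, and hence of $\widetilde{R}_{u,v}(q)$.

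The serious obstacle is the second step, which is itself an open conjecture and is precisely the central object of study of the paper. Section~7 establishes it for dihedral flipclasses in full generality and for flipclasses of Weyl type with $h\leq 6$ and of symmetric type with $h\leq 7$, via reduction to finitely many cases (Section~6), the finer invariant furnished by the valence polynomial, and computer verification. The remark following Theorem \ref{colleziona} shows that Flip Combinatorial Invariance actually \emph{fails} in the non-Weyl finite groups $H_3$ and $H_4$, so a uniform Coxeter-theoretic argument cannot succeed: a crystallographic input is indispensable. A plausible route forward would be an induction on $h$ producing, for any two combinatorially isomorphic Weyl-type flipclasses, a bijection between their sets of increasing paths built from elementary flips, combined with an argument isolating the precise root-lattice feature that forbids the $H$-type obstructions. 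Pending a resolution of Flip Combinatorial Invariance in full generality, this strategy yields Conjecture \ref{comb-inv-con2} only in the partial forms recorded in Corollary \ref{combinatoriainvarianzaintro}.
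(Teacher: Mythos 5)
The statement you were asked about is a conjecture (equivalent, via Dyer's result \cite[Proposition~3.3]{DyeComp1}, to the Combinatorial Invariance Conjecture), and the paper does not prove it; your outline faithfully reproduces the paper's own strategy --- expand $\widetilde{R}_{u,v}$ via Theorem~\ref{Dyertilde}, decompose $P_h(u,v)$ into $h$-flipclasses, and invoke Flip Combinatorial Invariance as in Theorem~\ref{FCICimpliesCIC}. You also correctly identify that the missing ingredient is itself open (and fails in $H_3$, $H_4$), so that only the partial results of Corollary~\ref{combinatoriainvarianzaintro} actually follow; this matches the paper exactly.
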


\subsection{Time-support graph and time-support poset.}
\label{definizioni}
In this subsection, we recall  the definitions of some concepts introduced in \cite{EM}.

Given a directed graph  $G$, we denote the vertex set and the edge set of $G$ by $V(G)$ and $E(G)$, respectively. Given $h \in \mathbb N$ and $u,v \in V(G)$, we let $P^G_h(u,v)$ denote  the set of paths of length $h$  in $G$ from $u$ to $v$.

\begin{defn}
Let $G$ be an edge-labelled directed graph, $u,v \in V(G)$, and $h \in \mathbb N$.  Let $F$ be a subset of $P^G_h(u,v)$.

The {\em labelled time-support graph of $F$}, denoted $LTS_F$, is the edge-labelled  directed graph such that
\begin{enumerate}
\item $V(LTS_F)= \{ (a,i) \in V(G) \times [0,h]: \text{ $ \exists \big(u=x_0 {\longrightarrow}  \cdots {\longrightarrow} x_{h}=v\big)$ in $F$ with $x_i = a$} \}$ 
\item $E(LTS_F)= \{ (a,i) \stackrel{t}{\longrightarrow}  (b, i+1) : 
\\\text{ $ \exists \big(u=x_0 {\longrightarrow}  \cdots {\longrightarrow} x_{h}=v\big)$ in $F$  with $x_i = a$, $x_{i+1} = b$ and containing $x_i  \stackrel{t}{\longrightarrow}  x_{i+1}$}\}$.
\end{enumerate}
The {\em time-support graph of $F$}, denoted $TS_F$, is the  directed graph obtained by $LTS_F$ by forgetting the labels of the edges.

For $i \in \{0,1, \ldots, h\}$, we say that the path $\big(u=x_0 {\longrightarrow} x_1 {\longrightarrow} \cdots {\longrightarrow} x_{h}=v\big)$ passes through vertex $x_i$ at time $i$ and through the edge $x_i {\longrightarrow} x_{i+1}$ between time $i$ and $i+1$.
\end{defn}
As an example, consider the graph $G$ depicted in Figure~\ref{espopancia0}, on the left. Edges  point upwards and can have arbitrary labels. Let $F$ be the set of all paths from $u$ to $v$ of length 4. Then  the time-support graph $TS_F$ is depicted on the right.

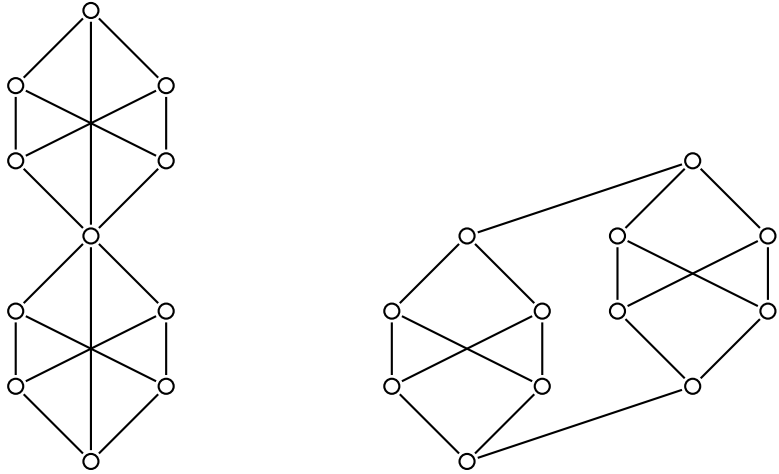
\begin{figure}[h] \label{espomateria}
\centering
\begin{tikzpicture}

     \draw (-3,0) node[fill=white, thick, draw=black, inner sep=0,outer sep=0.5mm, minimum size=2mm, circle](u) {};

    \draw (-4,1) node[fill=white, thick, draw=black, inner sep=0,outer sep=0.5mm, minimum size=2mm, circle](a1) {};
    \draw (-2,1) node[fill=white, thick, draw=black, inner sep=0,outer sep=0.5mm, minimum size=2mm, circle](a2) {};
       
    \draw (-4,2) node[fill=white, thick, draw=black, inner sep=0,outer sep=0.5mm, minimum size=2mm, circle](b1) {};
    \draw (-2,2) node[fill=white, thick, draw=black, inner sep=0,outer sep=0.5mm, minimum size=2mm, circle](b2) {};
        
    \draw (-3,3) node[fill=white, thick, draw=black, inner sep=0,outer sep=0.5mm, minimum size=2mm, circle](c1) {};
        
    \draw (-4,4) node[fill=white, thick, draw=black, inner sep=0,outer sep=0.5mm, minimum size=2mm, circle](d1) {};
    \draw (-2,4) node[fill=white, thick, draw=black, inner sep=0,outer sep=0.5mm, minimum size=2mm, circle](d2) {};
           
    \draw (-4,5) node[fill=white, thick, draw=black, inner sep=0,outer sep=0.5mm, minimum size=2mm, circle](e1) {};
    \draw (-2,5) node[fill=white, thick, draw=black, inner sep=0,outer sep=0.5mm, minimum size=2mm, circle](e2) {};
        
    \draw (-3,6) node[fill=white, thick, draw=black, inner sep=0,outer sep=0.5mm, minimum size=2mm, circle](f1) {};

    \draw (2,0) node[fill=white, thick, draw=black, inner sep=0,outer sep=0.5mm, minimum size=2mm, circle](uu) {};
        
    \draw (1,1) node[fill=white, thick, draw=black, inner sep=0,outer sep=0.5mm, minimum size=2mm, circle](aa1) {};
    \draw (3,1) node[fill=white, thick, draw=black, inner sep=0,outer sep=0.5mm, minimum size=2mm, circle](aa2) {};
           
    \draw (1,2) node[fill=white, thick, draw=black, inner sep=0,outer sep=0.5mm, minimum size=2mm, circle](bb1) {};
    \draw (3,2) node[fill=white, thick, draw=black, inner sep=0,outer sep=0.5mm, minimum size=2mm, circle](bb2) {};
        
    \draw (2,3) node[fill=white, thick, draw=black, inner sep=0,outer sep=0.5mm, minimum size=2mm, circle](cc1) {};
        
    \draw (5,1) node[fill=white, thick, draw=black, inner sep=0,outer sep=0.5mm, minimum size=2mm, circle](cc2) {};
        
    \draw (4,2) node[fill=white, thick, draw=black, inner sep=0,outer sep=0.5mm, minimum size=2mm, circle](dd1) {};
    \draw (6,2) node[fill=white, thick, draw=black, inner sep=0,outer sep=0.5mm, minimum size=2mm, circle](dd2) {};
           
    \draw (4,3) node[fill=white, thick, draw=black, inner sep=0,outer sep=0.5mm, minimum size=2mm, circle](ee1) {};
    \draw (6,3) node[fill=white, thick, draw=black, inner sep=0,outer sep=0.5mm, minimum size=2mm, circle](ee2) {};
        
    \draw (5,4) node[fill=white, thick, draw=black, inner sep=0,outer sep=0.5mm, minimum size=2mm, circle](ff1) {};

    \draw[thick] (u)--(a1);

    \draw[thick] (u)--(a2);

    \draw[thick] (a1)--(b1);

    \draw[thick] (a1)--(b2);

    \draw[thick] (a2)--(b1);

    \draw[thick] (a2)--(b2);

    \draw[thick] (b1)--(c1);

    \draw[thick] (b2)--(c1);

    \draw[thick] (c1)--(d1);

    \draw[thick] (c1)--(d2);

    \draw[thick] (d1)--(e1);

    \draw[thick] (d1)--(e2);

    \draw[thick] (d2)--(e1);

    \draw[thick] (d2)--(e2);

    \draw[thick] (e1)--(f1);

    \draw[thick] (e2)--(f1);

    \draw[thick] (u)--(c1);

    \draw[thick] (f1)--(c1);

    \draw[thick] (uu)--(aa1);

    \draw[thick] (uu)--(aa2);

    \draw[thick] (aa1)--(bb1);

    \draw[thick] (aa1)--(bb2);

    \draw[thick] (aa2)--(bb1);

    \draw[thick] (aa2)--(bb2);

    \draw[thick] (bb1)--(cc1);

    \draw[thick] (bb2)--(cc1);

    \draw[thick] (cc2)--(dd1);

    \draw[thick] (cc2)--(dd2);

    \draw[thick] (dd1)--(ee1);

    \draw[thick] (dd1)--(ee2);

    \draw[thick] (dd2)--(ee1);

    \draw[thick] (dd2)--(ee2);

    \draw[thick] (ee1)--(ff1);

    \draw[thick] (ee2)--(ff1);

    \draw[thick] (uu)--(cc2);

    \draw[thick] (ff1)--(cc1);

\end{tikzpicture}
    \caption{Time-support graph}
    \label{espopancia0}
\end{figure}

The time-support graph  is the Hasse diagram of a poset, that we call  \emph{time-support poset}. The time-support poset of a subset $F$ of $P^G_h(u,v)$, denoted  $TSP_F$, is graded of rank $h$ (i.e., every maximal chain has the same
length $h$), and has a unique minimal element and a unique maximal element. The two notions of time-support graph and time-support poset are equivalent.

\section{Flipclasses for arbitrary Coxeter groups}

In this section, we show how one can define flipclasses of arbitrary Coxeter groups. It is  not clear how to extend the definiton of flip outside type $A$ (see \cite{EM}) since there might be more than two paths of length 2 joining the same end-points. 

Throughout this section, except when otherwise stated,  $W$ is any Coxeter group.  Given $h \in \mathbb N$ and $u,v \in  W$, we denote the set of $h$-paths from $u$ to $v$ in the Bruhat graph $B(W)$ simply by $P_{h}(u,v)$ (i.e., $P_{h}(u,v) = P^{B(W)}_{h}(u,v))$. 

\begin{lem}
\label{mari}
Let $u,v \in W$. 
\begin{enumerate}
\item $P_2(u,v)$ contains an even number of paths.
\item Let $X=\{x\in [u,v]\setminus\{u,v\} : \text{ there exists $\Gamma$ in $P_2(u,v)$ containing $x$} \}$.  Then there exists a unique fixed-point free involution  $f :X \to X$ such that, given $x_1,x_2 \in X$, with $\ell(x_1) < \ell(x_2)$ and $x_2\neq f(x_1)$, we have $\ell(f(x_1)) < \ell(f(x_2))$.  

\end{enumerate} 
\end{lem}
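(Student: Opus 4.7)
The plan is to reduce to a dihedral reflection subgroup via Proposition~\ref{stessopiano} and Theorem~\ref{DyeComp1.thm1.4}, and then prove both statements by an explicit ``twin'' pairing of the intermediate vertices.

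If $P_2(u,v)=\emptyset$ there is nothing to prove; otherwise fix $\Gamma_0=(u\xrightarrow{t_0}x_0\xrightarrow{r_0}v)\in P_2(u,v)$ and set $V=\operatorname{span}\{\alpha_{t_0},\alpha_{r_0}\}$. By Proposition~\ref{stessopiano} every path in $P_2(u,v)$ has edge labels whose roots lie in $V$, so $W'=\langle t_\alpha:\alpha\in V\cap\Phi\rangle$ is a dihedral reflection subgroup of $W$ containing every such label; in particular every intermediate vertex of a path in $P_2(u,v)$ lies in the coset $C$ of $W'$ through $u$ and $v$. Theorem~\ref{DyeComp1.thm1.4} provides a label-preserving isomorphism $B(W')\to B(C)$, identifying $P_2(u,v)$ bijectively with $P_2(u',v')$ in $B(W')$ for some $u',v'\in W'\cong I_2(m)$ with $2\le m\le\infty$, and identifying Bruhat order on $C$ with Bruhat order on $W'$. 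Consequently, two elements of $X$ at distinct ranks of $B(W')$ are Bruhat-comparable in $W$, so their $W$-length ordering coincides with their rank ordering.

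Inside $I_2(m)$, write $r=s_1s_2$. For each rank $k$ with $\ell(u')<k<\ell(v')$, the two rank-$k$ elements --- the alternating products $y_k,y_k^\sharp$ of length $k$ beginning with $s_1,s_2$ respectively --- satisfy the twin identity
\[ y_k=r^k\,y_k^\sharp, \]
proved by a short induction on $k$. Since $r^k$ is a rotation, left-multiplication by it preserves rotation-vs-reflection type; hence $y_k(u')^{-1}=r^k\bigl(y_k^\sharp(u')^{-1}\bigr)$ is a reflection iff $y_k^\sharp(u')^{-1}$ is, and symmetrically $v'y_k^{-1}$ is a reflection iff $v'(y_k^\sharp)^{-1}$ is. Therefore $u'\to y_k\to v'$ is a path in $B(W')$ iff $u'\to y_k^\sharp\to v'$ is; transferring through the isomorphism, the corresponding pair $x_k,x_k^\sharp\in C$ satisfies $x_k\in X\iff x_k^\sharp\in X$. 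Since each element of $X$ determines a unique path in $P_2(u,v)$, $|P_2(u,v)|=|X|$ is a disjoint union of twin pairs, establishing $(1)$.

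For $(2)$, set $f:X\to X$ to be the twin swap $f(x_k)=x_k^\sharp$, a fixed-point-free, rank-preserving involution. If $\ell(x_1)<\ell(x_2)$ and $x_2\ne f(x_1)$, then $x_1,x_2$ are not twins, so have distinct ranks and are Bruhat-comparable; by the rank--length correspondence of paragraph~two, $\mathrm{rank}(x_1)<\mathrm{rank}(x_2)$, and applying that correspondence to $(f(x_1),f(x_2))$ (same ranks as $x_1,x_2$) gives $\ell(f(x_1))<\ell(f(x_2))$. For uniqueness, suppose $g\ne f$ is another such involution with $g(x_0)\ne f(x_0)$; then $g(x_0)$ has rank different from that of $x_0$, and (after exchanging $x_0$ with $g(x_0)$ if needed) strictly larger. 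Writing $z_0=g(x_0)$ with twin $z_0^\sharp\in X$, applying the length condition to $(x_0,z_0^\sharp)$ forces $\ell(g(z_0^\sharp))>\ell(z_0)$, whence $\mathrm{rank}(g(z_0^\sharp))>\mathrm{rank}(z_0)$; iterating on $(z_0^\sharp,g(z_0^\sharp)^\sharp)$ and so on produces an infinite strictly increasing sequence of ranks in the finite set $X$, a contradiction. The delicate points in the plan are the dihedral twin identity and this cascading rank argument.
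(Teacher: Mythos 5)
Your proof is correct and follows essentially the same route as the paper's: reduce to the dihedral reflection subgroup via Proposition~\ref{stessopiano} and Theorem~\ref{DyeComp1.thm1.4}, define $f$ by transporting the equal-rank (twin) pairing through the Bruhat-graph isomorphism, which is order-preserving in the one direction needed, and deduce the length condition in $W$ from the rank order in the dihedral group. The only substantive difference is that you prove uniqueness directly in $W$ via the cascading-rank argument (spelling out the dihedral twin identity along the way), whereas the paper deduces it from the uniqueness of the corresponding pairing in the dihedral group; both are valid.
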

\begin{proof}
By \cite[Lemma~3.1]{DyeComp1}, the subgroup $W'$ generated by all reflections labeling some edge in some path in  $P_2(u,v)$ is a dihedral reflection subgroup of $W$. Let $C$ be the coset of $W'$ containing $u$ (and hence also $X$ and $v$). By Theorem~\ref{DyeComp1.thm1.4}, there is a unique edge labeling preserving isomorphism $\phi$ from the graph $B(W')$ to the graph $B(C)$. Notice that $\phi$  is order preserving (while $\phi^{-1}$ might not be).
 
For short, let $u'$, $v'$, $X'$ correspond to $u$, $v$, $X$.

The first statement holds for $P_2(u,v)$ since it holds for $P_2(u',v')$.

The second statement clearly holds for $X'$. Indeed, such unique involution is the unique fixed-point free map $f':X' \to X'$ such that $\ell(x) = \ell(f'(x))$. Let $f= \phi \circ f'  \circ  \phi^{-1}$. Notice that, in general,  $\ell(x) \neq \ell(f(x))$. 

Let $x_1,x_2 \in X$, with $\ell(x_1) < \ell(x_2)$ and $x_2\neq f(x_1)$. Let $x_1'$ and $x_2'$ correspond to $x_1$ and $x_2$. Since $x_2'\neq f'(x_1')$, we have $\ell(x'_1) < \ell(x'_2)$. Then $\ell(f'(x'_1)) < \ell(f'(x'_2))$: hence  $f'(x'_1) < f'(x'_2)$ since $W'$ is a dihedral group. Being $\phi$ order preserving, we have $f(x_1) < f(x_2)$ and so $\ell(f(x_1)) < \ell(f(x_2))$.

The uniqueness follows by the uniqueness of $f'$.
\end{proof}

\begin{rmk}
Let $u,v\in W$. Let $W'$ be the dihedral reflection subgroup  generated by all reflections labeling some edge in some path in  $P_2(u,v)$. Then, by the classification of reflection subgroups of Weyl groups which may be traced back to \cite{BoSi} and \cite{Dy} (see \cite[Section 2]{DyLe} for a clear, concise and self contained treatment), we note the following.  

\begin{enumerate}
\item 
If $W$ is a simply laced Weyl group,  then  $|P_{2}(u,v)|  \in \{0,2\}$, for all $u,v\in W$.
\item
If $W$ is a Weyl group of type $B$ or $F_4$, then  $|P_{2}(u,v)| \in\{0,2,4\}$, for all $u,v\in W$. More precisely, suppose $|P_{2}(u,v)| \neq 0$  and let  $ \big( u\stackrel{t_\alpha}{\longrightarrow}  x\stackrel{t_\beta}{\longrightarrow}  v \big) \in P_{2}(u,v)$. Then 
\begin{equation*}
P_{2}(u,v)= \left\{ \begin{array}{ll}
4, & \mbox{if $\alpha \perp \beta$ and $\Phi \cap \operatorname{span}\{\alpha, \beta\}$ is of type $B_2$}; \\
2, & \mbox{otherwise.} 
\end{array} \right. 
\end{equation*}
Indeed, suppose $|P_{2}(u,v)| \neq 0$ and let  $V=\operatorname{span}\{\alpha_t : t \text{ is a label in a path in $P_{2}(u,v)$}\}$. We have $\dim V=2$, by Proposition~\ref{stessopiano}.  Let $\Phi'=\Phi \cap V$. Then $\Phi'$ is a crystallographic root system of rank 2 of type either $A_1\times A_1$, or $A_2$, or $B_2$.  
By  Theorem~\ref{DyeComp1.thm1.4}, there is an edge labeling preserving isomorphism  between the graph induced by the paths of length 2 from $u$ to $v$ and a subgraph of the Bruhat graph of the Weyl group of $\Phi'$. Hence, the assertion follows.
\item
For each even number $n$, there exist a Coxeter group $W$ and $u,v\in W$ with $|P_{2}(u,v)| =n$ (for instance, the dihedral Coxeter group of order $2n$).
\end{enumerate}
\end{rmk}
The following definitions generalize \cite[Definition~3.7]{EM}.
\begin{defn}
\label{flipflip}
The \emph{flip} of a path $ \big(u{\longrightarrow} x {\longrightarrow} v)$ is the path  $\big(u{\longrightarrow} f(x) {\longrightarrow} v)$, where $f$ is the involution of Lemma~\ref{mari}.  

Let $u,v \in W$  and $h \in \mathbb N$. 
\begin{itemize}
\item
The {\em $i$-th flip} is the map $f_i: P_{h}(u,v) \to P_{h}(u,v)$ that sends a path $\Gamma = \big(u=x_0 {\longrightarrow} x_1 {\longrightarrow} \cdots {\longrightarrow} x_{h}=v\big)$ to the path obtained from $\Gamma$ by substituting the subpath $\big( x_{i-1} {\longrightarrow} x_i {\longrightarrow}  x_{i+1}\big)$ with its flip, for $i\in [h-1]$.
\item 
An \emph{$h$-flipclass} of $W$ from $u$ to $v$ is a minimal subset of $P_{h}(u,v)$ closed under flips.
\end{itemize}
\end{defn}

\begin{rmk}
$ $

\begin{enumerate}
\item The definition of $i$-th flip, and hence the definition of a flipclass, depends only on the isomorphism type of the Bruhat graph (i.e., it is independent of the labels of the edges of the Bruhat graph).
\item 
A different notion of flip for arbitrary Coxeter groups is introduced in \cite{Bla}.  However, it does not fit our needs because it depends on the labels of the edges of the Bruhat graph. 
\item  Since $\ell(t)$ is odd for each $t\in T$,  there are no $h$-paths from $u$ to $v$, and a fortiori  no $h$-flipclasses of $W$ from $u$ to $v$, unless $h \equiv \ell(v)-\ell(u) \pmod 2$.
\end{enumerate}
\end{rmk}

\begin{exm}
In order to clarify the definition of flip, we give the following example. Let $W$ be the Weyl group of type $B_3$ with Coxeter generators $s,r,p$ such that $sr$, $rp$, and $sp$ have order $4$, $3$ and $2$, respectively. Let $u=e$, i.e., the identity element,  and $v=prsrps$. Suppose that we want to perform the flip of a length-2 path from $u$ to $v$ and all we know is the isomorphism class of the Bruhat graph $B([u,v])$ of the interval $[u,v]$ (we do not know the elements labeling the vertices and the reflections labeling the edges). We look at $P_{2}(u,v)$ and find that it consists of 4 paths. Even though we do not know that these paths are the following 4 paths:
\begin{itemize}
\item[-]  $\Gamma = \big( u\stackrel{s}{\longrightarrow}  x_1= s\stackrel{prsrp}{\longrightarrow}  v \big)$, 
\item[-]$\Gamma' = \big( u\stackrel{prp}{\longrightarrow} x_2=  prp \stackrel{psrsp}{\longrightarrow}  v \big)$,
\item[-] $\Delta = \big( u\stackrel{sprps}{\longrightarrow}  x_3= sprps\stackrel{prp}{\longrightarrow}  v \big)$, and 
\item[-] $\Delta' = \big( u\stackrel{prsrp}{\longrightarrow}  x_4= prsrp\stackrel{s}{\longrightarrow}  v \big)$,
\end{itemize}
we know the lengths of their middle elements $x_i$, $i\in[4]$ (see Figure~\ref{espopancia2}). We order the paths compatibly with these lengths (for instance, $\Delta $ could be placed either just before or just after $\Delta'$) and we match the paths according to this order.  So the flip of $\Gamma$ is $\Gamma'$ (and vice versa) and the flip of $\Delta$ is $\Delta '$ (and vice versa).
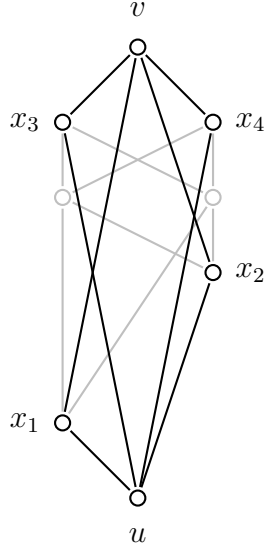
\begin{figure}[h] \label{espomateria2}
\centering
    \begin{tikzpicture}
      \draw (-3,0) node[fill=white, thick, draw=black, inner sep=0,outer sep=0.5mm, minimum size=2mm, circle](u) {};

      \draw (-4,1) node[fill=white, thick, draw=black, inner sep=0,outer sep=0.5mm, minimum size=2mm, circle](x1) {};
      \draw (-2,3) node[fill=white, thick, draw=black, inner sep=0,outer sep=0.5mm, minimum size=2mm, circle](x2) {};
      
      \draw (-4,4) node[fill=white, thick, draw=lightgray, inner sep=0,outer sep=0.5mm, minimum size=2mm, circle](a1) {};
      \draw (-2,4) node[fill=white, thick, draw=lightgray, inner sep=0,outer sep=0.5mm, minimum size=2mm, circle](a2) {};
             
      \draw (-4,5) node[fill=white, thick, draw=black, inner sep=0,outer sep=0.5mm, minimum size=2mm, circle](x3) {};
      \draw (-2,5) node[fill=white, thick, draw=black, inner sep=0,outer sep=0.5mm, minimum size=2mm, circle](x4) {};

      \draw (-3,6) node[fill=white, thick, draw=black, inner sep=0,outer sep=0.5mm, minimum size=2mm, circle](v) {};

    \draw[thick] (u)--(x1);

    \draw[thick] (u)--(x2);

    \draw[thick, draw=lightgray] (a1)--(x1);

    \draw[thick, draw=lightgray] (a1)--(x2);

    \draw[thick, draw=lightgray] (a2)--(x1);

    \draw[thick, draw=lightgray](a2)--(x2);

    \draw[thick, draw=lightgray](a1)--(x3);

    \draw[thick, draw=lightgray](a1)--(x4);

    \draw[thick, draw=lightgray](a2)--(x3);

    \draw[thick, draw=lightgray](a2)--(x4);

    \draw[thick] (u)--(x3);

    \draw[thick] (u)--(x4);

    \draw[thick] (v)--(x1);

    \draw[thick] (v)--(x2);

    \draw[thick] (v)--(x3);

    \draw[thick] (v)--(x4);

    \node (u') at (-3,-0.5) {$u$};
    \node (x1') at (-4.5,1) {$x_1$};   
    \node (x2') at (-1.5,3) {$x_2$};
       
    \node (x3') at (-4.5,5) {$x_3$};
    \node (x4') at (-1.5,5) {$x_4$};

    \node (v') at (-3,6.5) {$v$};   

    \end{tikzpicture}
    \caption{Example of flips}
    \label{espopancia2}
\end{figure}

\end{exm}

\begin{defn}
Let $F$ be a flipclass of an arbitrary Coxeter group. We let 
\begin{eqnarray*}
E(F) &=&\{w\in W : \textrm{ there exists $\Gamma$ in $F$ with $w\in \Gamma$} \},\\
T(F) &=&\{t\in T : \textrm{ there exists $\Gamma$ in $F$ with an edge labelled by $t$} \},
\end{eqnarray*}
and we denote by $W(F)$ the reflection subgroup generated by $T(F)$. 
\end{defn}

\begin{rmk}
\label{nonsempreuguali}
As opposed to the case of flipclasses in Weyl groups of type $A$ (see \cite[Lemma~6.1]{EM}), the subgroup $W(F)$ might be larger than the reflection subgroup generated by the reflections of a single path in $F$ (as already happens for the $2$-flipclass consisting of  $\big( e\stackrel{s}{\longrightarrow}  s\stackrel{rsr}{\longrightarrow}  w_0 \big)$ and  $\big( e\stackrel{r}{\longrightarrow} r \stackrel{srs}{\longrightarrow}  w_0 \big)$  in the Coxeter system $(W,\{s,r\})$  of type $B_2$).
\end{rmk}

\begin{defn}
Let $W$ and $W'$ be two Coxeter groups,  and $h \in \mathbb N$. Let $F$ be an $h$-flipclass  of $W$ and $F'$ be an $h$-flipclass  of $W'$. 
\begin{itemize}
\item  We say that $F$ and $F'$ are \emph{isomorphic} (respectively, \emph{anti-isomorphic}) provided that there exists a pair $(\varepsilon,\tau)$ such that:
\begin{itemize}
\item $\varepsilon$ is a bijection from $E(F)$ to $E(F')$ and $\tau$ is a bijection from $T(F)$ to $T(F')$ such that   
$$\big(x_0 \stackrel{t_1}{\longrightarrow} x_1 \stackrel{t_2}{\longrightarrow}  \cdots \stackrel{t_{h-1}}{\longrightarrow} x_{h-1}\stackrel{t_h}{\longrightarrow} x_h\big) \in F$$ if and only if  
$$\big(\varepsilon(x_0) \stackrel{\tau(t_1)}{\longrightarrow} \varepsilon(x_1) \stackrel{\tau(t_2)}{\longrightarrow}  \cdots \stackrel{\tau(t_{h-1})}{\longrightarrow} \varepsilon(x_{h-1})\stackrel{\tau(t_h)}{\longrightarrow} \varepsilon(x_h)\big) \in F'$$ 
(respectively, $$\big(\varepsilon(x_h) \stackrel{\tau(t_h)}{\longrightarrow} \varepsilon(x_{h-1}) \stackrel{\tau(t_{h-1})}{\longrightarrow}  \cdots \stackrel{\tau(t_{2})}{\longrightarrow} \varepsilon(x_{1})\stackrel{\tau(t_1)}{\longrightarrow} \varepsilon(x_0)\big) \in F');$$
\item  the induced  bijection $\bar{\varepsilon}$ from $F$ to $F'$ is flip-preserving, i.e., $\bar{\varepsilon}f_i(\Gamma)= f_{i} \bar{\varepsilon}(\Gamma)$ (respectively, $\bar{\varepsilon}f_i(\Gamma)= f_{h-i} \bar{\varepsilon}(\Gamma)$) for all $\Gamma \in F$;
\item there exist a reflection ordering of $W$ and a reflection ordering of $W'$ w.r.t. which $\tau$ is order preserving  (respectively, order reversing).
\end{itemize}

\item  We say that $F$ and $F'$ are \emph{combinatorially isomorphic}  (respectively, \emph{combinatorially anti-isomorphic}) provided that there exists a bijection  $\varepsilon$  from $E(F)$ to $E(F')$ such that:
 $$\big(x_0{\longrightarrow} x_1 {\longrightarrow}  \cdots {\longrightarrow} x_{h-1}{\longrightarrow} x_h\big) \in F$$ if and only if  
$$\big(\varepsilon(x_0) {\longrightarrow} \varepsilon(x_1) {\longrightarrow}  \cdots {\longrightarrow} \varepsilon(x_{h-1}){\longrightarrow} \varepsilon(x_h)\big) \in F'$$ 
(respectively, $$\big(\varepsilon(x_h) {\longrightarrow} \varepsilon(x_{h-1}) {\longrightarrow}  \cdots {\longrightarrow} \varepsilon(x_{1}){\longrightarrow} \varepsilon(x_0)\big) \in F')$$
and  the induced  bijection $\bar{\varepsilon}$ from $F$ to $F'$ is flip-preserving, i.e., $\bar{\varepsilon}f_i(\Gamma)= f_{i} \bar{\varepsilon}(\Gamma)$ (respectively, $\bar{\varepsilon}f_i(\Gamma)= f_{h-i} \bar{\varepsilon}(\Gamma)$) for all $\Gamma \in F$.
\end{itemize}
\end{defn}
Given two flipclasses $F$ and $F'$, we write $F \cong F'$ (respectively,  $F\stackrel{\text{\tiny{cb}}}\cong F'$) if they are isomorphic (respectively, combinatorially isomorphic).

Clearly, isomorphic flipclasses are combinatorially isomorphic. Furthermore, combinatorially isomorphic flipclasses have  isomorphic time-support graphs and  time-support posets. If two flipclasses are combinatorially anti-isomorphic, then their time-support graphs, as well as their  time-support  posets, are dual to each other.

\begin{rmk}
\label{costante}
Let $BI$ be the set of all Bruhat intervals in all Coxeter groups, i.e., the set of the posets $P$ such that there exist a Coxeter group $W$ and $u,v\in W$ with $P=[u,v]$.
Let $f$ be a function having $BI$ as  domain. We say that $f$ is a {\em combinatorial invariant of Bruhat intervals}, or just a combinatorial invariant, provided that $f$ is constant on the isomorphism classes (in the category of posets). 
For each $h$ in $\mathbb N$, associating with an interval $[u,v]$ the multiset of combinatorial isomorphism classes of the $h$-flipclasses  from $u$ to $v$, and as well as  the multiset of the isomorphism classes of their  time-support graphs or poset,  gives combinatorial invariants of Bruhat intervals. 
\end{rmk}

\begin{exm}
\label{nottefonda2} 
Let $W$ be the Weyl group of type $A_3$ with Coxeter generators $s_1,s_2,s_3$ such that  $s_1s_2$, $s_2s_3$, $s_1s_3$ have order $3$, $3$ and $2$, respectively. Let $u=e$, i.e., the identity element,  and $v=s_1s_2s_3s_2s_1$. 
The interval $[u,v]$ has one $5$-flipclass, two $3$-flipclasses (both combinatorially isomorphic to the dihedral $3$-flipclass, see Definition~\ref{defd}) and one $1$-flipclass. These flipclasses are depicted in Figure~\ref{4_flipclass}, where elements are denoted with the standard one-line notation.

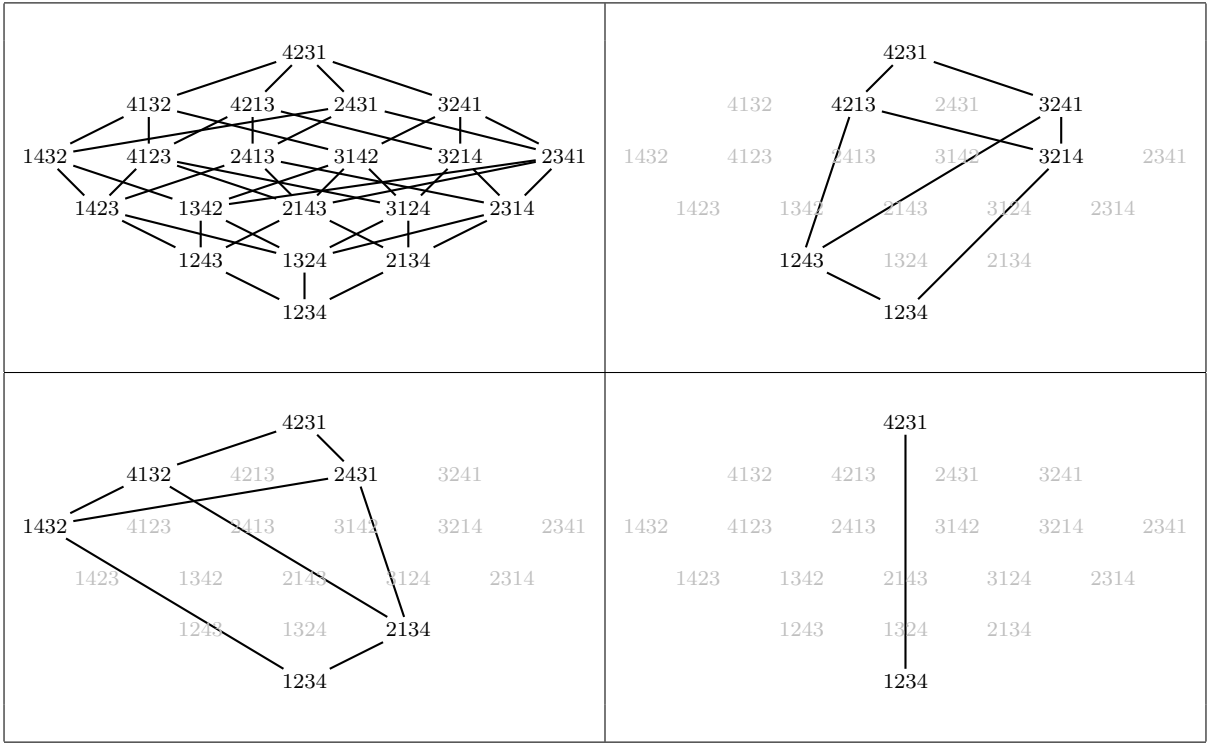
\begin{figure}
\begin{center}
\begin{tabular}{|c|c|}
\hline
& \\
 \begin{tikzpicture}[scale=.69,every node/.style={inner sep = 2pt}]
    \node (u) at (0,0) {{\tiny $1234$}};
    \node (x1) at (-2,1) {{\tiny $1243$}};
    \node (x2) at (0,1) {{\tiny $1324$}};
    \node (x3) at (2,1) {{\tiny $2134$}};
    \node (y1) at (-4,2) {{\tiny $1423$}};
    \node (y2) at (-2,2) {{\tiny $1342$}};
    \node (y3) at (0,2) {{\tiny $2143$}};
    \node (y4) at (2,2) {{\tiny $3124$}};
    \node (y5) at (4,2) {{\tiny $2314$}};
    \node (z1) at (-5,3) {{\tiny $1432$}};
    \node (z2) at (-3,3) {{\tiny $4123$}};
    \node (z3) at (-1,3) {{\tiny $2413$}};
    \node (z4) at (1,3) {{\tiny $3142$}};
    \node (z5) at (3,3) {{\tiny $3214$}};
    \node (z6) at (5,3) {{\tiny $2341$}};
    \node (w1) at (-3,4) {{\tiny $4132$}};
    \node (w2) at (-1,4) {{\tiny $4213$}};
    \node (w3) at (1,4) {{\tiny $2431$}};
    \node (w4) at (3,4) {{\tiny $3241$}};
    \node (v) at (0, 5) {{\tiny $4231$}};

    \draw[thick] (u)--(x1);
    \draw[thick] (u)--(x2);
    \draw[thick] (u)--(x3);
    \draw[thick] (x1)--(y1);
    \draw[thick] (x1)--(y2);
    \draw[thick] (x1)--(y3);
    \draw[thick] (x2)--(y1);
    \draw[thick] (x2)--(y2);
    \draw[thick] (x2)--(y4);
    \draw[thick] (x2)--(y5);
    \draw[thick] (x3)--(y3);
    \draw[thick] (x3)--(y4);
    \draw[thick] (x3)--(y5);
    \draw[thick] (y1)--(z1);
    \draw[thick] (y1)--(z2);
    \draw[thick] (y1)--(z3);
    \draw[thick] (y2)--(z1);
    \draw[thick] (y2)--(z4);
    \draw[thick] (y2)--(z6);
    \draw[thick] (y3)--(z2);
    \draw[thick] (y3)--(z3);
    \draw[thick] (y3)--(z4);
    \draw[thick] (y3)--(z6);
    \draw[thick] (y4)--(z2);
    \draw[thick] (y4)--(z4);
    \draw[thick] (y4)--(z5);
    \draw[thick] (y5)--(z3);
    \draw[thick] (y5)--(z5);
    \draw[thick] (y5)--(z6);
    \draw[thick] (z1)--(w1);
    \draw[thick] (z1)--(w3);
    \draw[thick] (z2)--(w1);
    \draw[thick] (z2)--(w2);
    \draw[thick] (z3)--(w2);
    \draw[thick] (z3)--(w3);
    \draw[thick] (z4)--(w1);
    \draw[thick] (z4)--(w4);
    \draw[thick] (z5)--(w2);
    \draw[thick] (z5)--(w4);
    \draw[thick] (z6)--(w3);
    \draw[thick] (z6)--(w4);
    \draw[thick] (w1)--(v);
    \draw[thick] (w2)--(v);
    \draw[thick] (w3)--(v);
    \draw[thick] (w4)--(v);
    
    \end{tikzpicture} &

\begin{tikzpicture}[scale=.69,every node/.style={inner sep = 2pt}]
    \node (u) at (0,0) {{\tiny $1234$}};
    \node (x1) at (-2,1) {{\tiny $1243$}};
    \node (z5) at (3,3) {{\tiny $3214$}};
    \node (w2) at (-1,4) {{\tiny $4213$}};
    \node (w4) at (3,4) {{\tiny $3241$}};
    \node (v) at (0, 5) {{\tiny $4231$}};

    \draw[thick] (u)--(x1);
    \draw[thick] (u)--(z5);
    \draw[thick] (x1)--(w2);
    \draw[thick] (x1)--(w4);
    \draw[thick] (z5)--(w2);
    \draw[thick] (z5)--(w4);
    \draw[thick] (w2)--(v);
    \draw[thick] (w4)--(v);

    \node[lightgray] (x2) at (0,1) {{\tiny $1324$}};
    \node[lightgray] (x3) at (2,1) {{\tiny $2134$}};
    \node[lightgray] (y1) at (-4,2) {{\tiny $1423$}};
    \node[lightgray] (y2) at (-2,2) {{\tiny $1342$}};
    \node[lightgray] (y3) at (0,2) {{\tiny $2143$}};
    \node[lightgray] (y4) at (2,2) {{\tiny $3124$}};
    \node[lightgray] (y5) at (4,2) {{\tiny $2314$}};
    \node[lightgray] (z1) at (-5,3) {{\tiny $1432$}};
    \node[lightgray] (z2) at (-3,3) {{\tiny $4123$}};
    \node[lightgray] (z3) at (-1,3) {{\tiny $2413$}};
    \node[lightgray] (z4) at (1,3) {{\tiny $3142$}};
    \node[lightgray] (z6) at (5,3) {{\tiny $2341$}};
    \node[lightgray] (w1) at (-3,4) {{\tiny $4132$}};
    \node[lightgray] (w3) at (1,4) {{\tiny $2431$}};
    
    \end{tikzpicture}
    \\ 
& \\
\hline
& \\
\begin{tikzpicture}[scale=.69,every node/.style={inner sep = 2pt}]
    \node (u) at (0,0) {{\tiny $1234$}};
    \node (x3) at (2,1) {{\tiny $2134$}};
    \node (z1) at (-5,3) {{\tiny $1432$}};
    \node (w1) at (-3,4) {{\tiny $4132$}};
    \node (w3) at (1,4) {{\tiny $2431$}};
    \node (v) at (0, 5) {{\tiny $4231$}};

    \draw[thick] (u)--(x3);
    \draw[thick] (u)--(z1);
    \draw[thick] (x3)--(w1);
    \draw[thick] (x3)--(w3);
    \draw[thick] (z1)--(w1);
    \draw[thick] (z1)--(w3);
    \draw[thick] (w1)--(v);
    \draw[thick] (w3)--(v);

    \node[lightgray] (x1) at (-2,1) {{\tiny $1243$}};
    \node[lightgray] (x2) at (0,1) {{\tiny $1324$}};
    \node[lightgray] (y1) at (-4,2) {{\tiny $1423$}};
    \node[lightgray] (y2) at (-2,2) {{\tiny $1342$}};
    \node[lightgray] (y3) at (0,2) {{\tiny $2143$}};
    \node[lightgray] (y4) at (2,2) {{\tiny $3124$}};
    \node[lightgray] (y5) at (4,2) {{\tiny $2314$}};
    \node[lightgray] (z2) at (-3,3) {{\tiny $4123$}};
    \node[lightgray] (z3) at (-1,3) {{\tiny $2413$}};
    \node[lightgray] (z4) at (1,3) {{\tiny $3142$}};
    \node[lightgray] (z5) at (3,3) {{\tiny $3214$}};
    \node[lightgray] (z6) at (5,3) {{\tiny $2341$}};
    \node[lightgray] (w2) at (-1,4) {{\tiny $4213$}};
    \node[lightgray] (w4) at (3,4) {{\tiny $3241$}};

    \end{tikzpicture}
    & 
\begin{tikzpicture}[scale=.69,every node/.style={inner sep = 2pt}]
    \node (u) at (0,0) {{\tiny $1234$}};
    \node (v) at (0, 5) {{\tiny $4231$}};

    \draw[thick] (u)--(v);
    
    \node[lightgray] (x1) at (-2,1) {{\tiny $1243$}};
    \node[lightgray] (x2) at (0,1) {{\tiny $1324$}};
    \node[lightgray] (x3) at (2,1) {{\tiny $2134$}};
    \node[lightgray] (y1) at (-4,2) {{\tiny $1423$}};
    \node[lightgray] (y2) at (-2,2) {{\tiny $1342$}};
    \node[lightgray] (y3) at (0,2) {{\tiny $2143$}};
    \node[lightgray] (y4) at (2,2) {{\tiny $3124$}};
    \node[lightgray] (y5) at (4,2) {{\tiny $2314$}};
    \node[lightgray] (z1) at (-5,3) {{\tiny $1432$}};
    \node[lightgray] (z2) at (-3,3) {{\tiny $4123$}};
    \node[lightgray] (z3) at (-1,3) {{\tiny $2413$}};
    \node[lightgray] (z4) at (1,3) {{\tiny $3142$}};
    \node[lightgray] (z5) at (3,3) {{\tiny $3214$}};
    \node[lightgray] (z6) at (5,3) {{\tiny $2341$}};
    \node[lightgray] (w1) at (-3,4) {{\tiny $4132$}};
    \node[lightgray] (w2) at (-1,4) {{\tiny $4213$}};
    \node[lightgray] (w3) at (1,4) {{\tiny $2431$}};
    \node[lightgray] (w4) at (3,4) {{\tiny $3241$}};

    \end{tikzpicture}
\\
& \\
\hline
\end{tabular}
\end{center}
\caption{The flipclasses of the interval $[1234,4231]$.}
\label{4_flipclass}
\end{figure}

\end{exm}

The following result is among the tools that we use to reduce the computer calculations to a tractable amount.
\begin{lem}
  \label{lem:(anti)-iso}
Let $W$ be a finite Coxeter group,  and $u,v \in W$. Let $F$ be an $h$-flipclass from $u$ to $v$. 
Then
\begin{enumerate}
\item $(\varepsilon : x\mapsto xw_0, \tau : t\mapsto t)$ is an anti-isomorphism between $F$ and a flipclass from $vw_0$ to $uw_0$;
\item $(\varepsilon : x\mapsto w_0x,   \tau :  t\mapsto w_0tw_0)$ is  an anti-isomorphism  between $F$ and a flipclass from $w_0v$ to $w_0u$;
\item $(\varepsilon : x\mapsto w_0xw_0,  \tau :  t\mapsto w_0tw_0)$ is an isomorphism  between $F$ and a flipclass from $w_0uw_0$ to $w_0vw_0$.
 \end{enumerate}
\end{lem}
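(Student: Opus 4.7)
The plan is to verify, for each of the three maps, the three conditions in the definition of (anti)-isomorphism of flipclasses: that the pair $(\varepsilon,\tau)$ yields a bijection between $F$ and a set of paths with the claimed endpoints, that the induced map $\bar\varepsilon$ on paths intertwines the flips appropriately (as $\bar\varepsilon f_i = f_i \bar\varepsilon$ in case (3) and $\bar\varepsilon f_i = f_{h-i}\bar\varepsilon$ in cases (1)--(2)), and that there exist reflection orderings on $W$ that make $\tau$ order-preserving or order-reversing as required.

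First I would analyze how each map acts on the Bruhat graph. Writing $u \stackrel{t}{\longrightarrow} v$ for $\ell(u)<\ell(v)$ with $vu^{-1}=t$, a direct computation shows that $x\mapsto xw_0$ sends this edge to $vw_0 \stackrel{t}{\longrightarrow} uw_0$ (direction reversed, label unchanged); that $x\mapsto w_0 x$ sends it to $w_0 v \stackrel{w_0 t w_0}{\longrightarrow} w_0 u$ (direction reversed, label conjugated); and that $x\mapsto w_0 x w_0$ sends it to $w_0 u w_0 \stackrel{w_0 t w_0}{\longrightarrow} w_0 v w_0$ (direction and length preserved, label conjugated). Here one uses $\ell(xw_0)=\ell(w_0 x)=\ell(w_0)-\ell(x)$ and the fact that, $W$ being finite, conjugation by $w_0$ is an automorphism of $(W,S)$ (because $w_0$ sends the simple roots to their negatives up to a permutation). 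Applying these rules edge by edge to each path in $F$ turns $F$ into a set of paths with the claimed endpoints, and the resulting set is again closed under flips once the flip-compatibility has been established.

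For flip-preservation I would invoke Lemma~\ref{mari}: the involution $f$ on the middle vertices of paths in $P_2(u,v)$ is characterized intrinsically by the length-ordering of those middle vertices together with the pairing dictated by the Bruhat subgraph. Both $x\mapsto xw_0$ and $x\mapsto w_0 x$ replace $\ell$ by $\ell(w_0)-\ell$ on $[u,v]$, reversing all length comparisons consistently and therefore still intertwining the unique involution of the target; the map $x\mapsto w_0 xw_0$ preserves $\ell$ outright. Hence each local swap at positions $(i-1,i,i+1)$ in a source path matches the corresponding local swap in the image path. In cases (1) and (2) reversing a path of length $h$ moves the subpath at positions $(i-1,i,i+1)$ to positions $(h-i+1,h-i,h-i-1)$, so the matching local swap is, by definition, the $(h-i)$-th flip of the reversed path, giving $\bar\varepsilon f_i = f_{h-i}\bar\varepsilon$; in case (3) the index is preserved.

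For the reflection-ordering condition, I would use two standard observations: first, that the opposite $\preceq^{\mathrm{op}}$ of any reflection ordering $\preceq$ is again a reflection ordering, since the defining dihedral condition is symmetric under reversal; second, that the conjugation $\sigma(t)=w_0 t w_0$, being induced by an automorphism of $(W,S)$, sends dihedral reflection subgroups to dihedral reflection subgroups and their Coxeter generators to Coxeter generators, so the pullback $\preceq^{\sigma}$ defined by $s\preceq^{\sigma}s'$ iff $\sigma(s)\preceq\sigma(s')$ is again a reflection ordering. Fixing any reflection ordering $\preceq$ on $W$, in case (1) I pair $\preceq$ with $\preceq^{\mathrm{op}}$ to make $\tau=\mathrm{id}$ order-reversing; in case (2) I pair $\preceq$ with $(\preceq^{\sigma})^{\mathrm{op}}$ to make $\tau=\sigma$ order-reversing; and in case (3) I pair $\preceq$ with $\preceq^{\sigma}$ to make $\tau=\sigma$ order-preserving. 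The main obstacle is really just bookkeeping: one must check carefully that the involution of Lemma~\ref{mari} is a genuine combinatorial invariant of the Bruhat graph, so that any (anti-)directed graph map intertwines it, and that the reindexing $i\mapsto h-i$ in the anti-isomorphic cases is consistent with reversing paths. Once these routine checks are in place the three verifications run in parallel.
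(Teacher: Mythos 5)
Your proposal is correct and follows essentially the same route as the paper's (much terser) proof: multiplication by $w_0$ (anti-)automorphisms of the Bruhat graph, flip-preservation via the length-characterization of the involution in Lemma~\ref{mari}, and opposite/conjugated reflection orderings to handle the order-reversing condition. The only cosmetic difference is that you verify part (3) directly, whereas the paper obtains it by composing the two anti-isomorphisms.
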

\begin{proof}
The multiplication by $w_0$ on either side yields an anti-isomorphism of the Bruhat graph inducing the identity map or the conjugation by $w_0$ on the labels. This operation is flip-preserving, hence sends a flipclass to a flipclass.

Suppose we have fixed a reflection ordering $\preceq$. For  $(x\mapsto xw_0, t\mapsto t)$, we consider the opposite reflection ordering, and for  $( x\mapsto w_0x,  t\mapsto w_0tw_0)$,  we consider the opposite of the reflection ordering obtained by conjugation by $w_0$. In both cases,   $\tau$ is order preserving.

The last statement follows by the first two considering  the reflection ordering obtained by conjugation by $w_0$. 
\end{proof}

\bigskip
The following two lemmas are used in  Section~\ref{finitensess}.
\begin{lem}
\label{FeF'}
Let $F$ be a flipclass of an arbitrary Coxeter group $W$. Let $\overline{W}$ be a reflection subgroup containing $T(F)$. 
Then there exist a flipclass $F'$ of $\overline{W }$ and an edge-labeling preserving isomorphism from $F$ to  $F'$.
\end{lem}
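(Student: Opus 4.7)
The plan is to transfer $F$ to $\overline{W}$ via the edge-labeling preserving isomorphism of Theorem~\ref{DyeComp1.thm1.4}. Let $u$ denote the common source of the paths in $F$. Every edge of every path in $F$ bears a label in $T(F) \subseteq \overline{W}$, so each vertex $x \in E(F)$ can be written $x = t_k \cdots t_1 u$ with $t_1, \ldots, t_k \in T(F)$, and therefore $x \in \overline{W}u$. Applying Theorem~\ref{DyeComp1.thm1.4} to the reflection subgroup $\overline{W} \leq W$ and the coset $\overline{W}u$ yields a unique edge-labeling preserving isomorphism $\phi : B(\overline{W}) \to B(\overline{W}u)$. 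I would then define $F'$ by pulling each path of $F$ back through $\phi$ vertex by vertex (carrying along the matching edge labels), obtaining a subset of $P_h^{B(\overline{W})}(\phi^{-1}(u), \phi^{-1}(v))$. The induced bijection $F \to F'$ is automatically an edge-labeling preserving isomorphism.

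What remains is to verify that $F'$ is a flipclass of $\overline{W}$, i.e., minimal and closed under flips in $\overline{W}$. Consider a 2-subpath $y_{i-1} \to y_i \to y_{i+1}$ of a path in $F'$, and set $x_j := \phi(y_j) \in \overline{W}u$. Since $F$ is closed under flips in $W$, the flipped vertex $f_W(x_i)$ lies in $E(F) \subseteq \overline{W}u$; hence $\phi^{-1}(f_W(x_i))$ is a well-defined vertex of $B(\overline{W})$, distinct from $y_i$, and is the middle vertex of the 2-path in $B(\overline{W})$ obtained by pulling back the flipped 2-subpath in $B(\overline{W}u)$. The crucial identity to establish is
\[
f_{\overline{W}}(y_i) \;=\; \phi^{-1}\bigl(f_W(x_i)\bigr),
\]
which, once available, shows that the $\overline{W}$-flipped path pushes forward to the $W$-flipped path in $F$, so $F'$ is flip-closed; minimality of $F'$ will then be inherited from that of $F$ through the flip-preserving bijection.

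The heart of the argument is this identity, and verifying it is the main obstacle. Both $f_W$ and $f_{\overline{W}}$ are obtained, through Lemma~\ref{mari}, from a dihedral reflection subgroup generated by the labels of 2-paths between the relevant endpoints (in $W$ and in $\overline{W}$, respectively). Call these dihedral subgroups $W'$ and $\overline{W}''$; by Proposition~\ref{stessopiano} their reflections have all roots in the same 2-dimensional subspace, so $\overline{W}'' \leq W'$, and the uniqueness clause of Theorem~\ref{DyeComp1.thm1.4} makes $\phi$ compatible with the isomorphisms of Bruhat graphs used inside Lemma~\ref{mari}'s construction. The delicate point is that length-equal pairs in $W'$ need not remain length-equal in the smaller group $\overline{W}''$; what rescues the argument is that the hypothesis $T(F) \subseteq \overline{W}$, together with flip-closure of $F$, forces the entire flip-pair $(x_i, f_W(x_i))$ to live inside $\overline{W}u$ and its associated labels inside $\overline{W}''$, so that the length-monotonicity characterization of $f_{\overline{W}}$ from Lemma~\ref{mari}(2) picks out exactly the pair $\bigl(y_i, \phi^{-1}(f_W(x_i))\bigr)$.
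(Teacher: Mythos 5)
Your overall route is the same as the paper's: let $C=\overline{W}u$ be the coset containing all paths of $F$ and transport everything through the unique edge-labeling preserving isomorphism of Theorem~\ref{DyeComp1.thm1.4}. (The paper's own proof consists of exactly this and stops there, leaving flip-compatibility implicit; you are right that this is the only point that actually needs an argument.) The problem is that the step you yourself call ``the heart of the argument'' --- the identity $f_{\overline{W}}(y_i)=\phi^{-1}(f_W(x_i))$ --- is in the end asserted rather than proved. Knowing that the pair $(x_i,f_W(x_i))$ lies in $C$ does not by itself determine how the length-monotonicity condition of Lemma~\ref{mari}(2), applied inside $\overline{W}$ to the (possibly different) set of middle vertices lying in $C$ and with respect to the length function of a possibly strictly smaller dihedral reflection subgroup, will pair them. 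The danger is real: in $W=B_2=\langle a,b\rangle$ with $\overline{W}=\{e,a,bab,w_0\}\cong A_1\times A_1$, the $W$-flip on the middle vertices of $P_2(e,w_0)$ pairs $a\leftrightarrow b$ and $aba\leftrightarrow bab$, while the $\overline{W}$-flip pairs $a\leftrightarrow bab$; the two involutions genuinely disagree on the vertices lying in $\overline{W}$. (This configuration is excluded when $T(F)\subseteq\overline{W}$ precisely because the $W$-flip partner of $a$ escapes the coset, as you observe --- but that observation is where your argument stops, not where the proof ends.)

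Here is the missing step. Let $W'$ be the dihedral reflection subgroup attached by Lemma~\ref{mari} to the $2$-paths from $x_{i-1}$ to $x_{i+1}$ in $W$, and $W''$ the one attached to the $2$-paths whose middle vertex lies in $C$; then $W''\le W'$ simply because every label occurring in the latter family is one of the generators of the former (your root-span argument does not by itself give this containment, since $W'$ need not contain every reflection whose root lies in that plane). Under the isomorphism of Theorem~\ref{DyeComp1.thm1.4} for $W'$, the pair $(x_i,f_W(x_i))$ corresponds to the two elements of $W'$ of a common $W'$-length, hence to two elements that are incomparable in the Bruhat order of $W'$. A directed path in the induced subgraph on a coset of $W''$ is in particular a directed path in $B(W')$, so these two elements remain incomparable after transporting to $W''$; and in a dihedral group two distinct incomparable elements necessarily have equal length. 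Hence they are a flip pair for $W''$, which is exactly $f_{\overline{W}}(y_i)=\phi^{-1}(f_W(x_i))$. With this inserted, your proof is complete (and more detailed than the one in the paper).
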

\begin{proof}
Recall that, given a subset $X$ of a Coxeter group $W$, we denote by $B(X)$ the directed graph induced on  $X$ by the Bruhat graph  $B(W)$.  Let $C$ be the left coset of $\overline{W}$ containing each path in $F$. 
The assertion follows by Theorem~\ref{DyeComp1.thm1.4}, which ensures that there is an edge labeling preserving isomorphism  between the graph $B(C)$ and the graph $B(\overline{W})$. 
\end{proof}
Given a flipclass $F$, we denote the set of roots $\{\alpha_t: t\in T(F)\}$ by $\Phi(F)$. Compare the next result with Remark~\ref{nonsempreuguali}.
\begin{lem}
\label{sempreuguali}
\label{phirango<h}
Let $F$ be an $h$-flipclass in an arbitrary Coxeter group. Then 
$$ \operatorname{span}\Phi(F)= \operatorname{span} \{\alpha_t: t \emph{ labels an edge in $\Gamma$} \},$$
for any $\Gamma$ in $F$. 
In particular,   $\dim(\operatorname{span}\Phi(F))\leq h $.
\end{lem}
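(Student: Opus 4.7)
The plan is to show that $\operatorname{span}\{\alpha_t : t \text{ labels an edge in } \Gamma\}$ is invariant under a single flip, and then invoke the fact that any two paths in a flipclass are connected by a sequence of flips.

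First, fix $\Gamma\in F$ and set $R(\Gamma) = \{\alpha_t : t \text{ labels an edge in } \Gamma\}$. Suppose $\Gamma'$ is obtained from $\Gamma$ by applying the $i$-th flip $f_i$, so that $\Gamma$ and $\Gamma'$ agree outside the subpath between positions $i-1$ and $i+1$. On this subpath, $\Gamma$ contains two edges labeled by some reflections $t_1, r_1$ and $\Gamma'$ contains two edges labeled by some $t_2, r_2$, while both subpaths connect the same endpoints $x_{i-1}$ and $x_{i+1}$ via paths of length two. By Proposition~\ref{stessopiano}, we have
\[
\operatorname{span}\{\alpha_{t_1},\alpha_{r_1}\} = \operatorname{span}\{\alpha_{t_2},\alpha_{r_2}\}.
\]
Since the remaining edges (and thus the remaining roots) of $\Gamma$ and $\Gamma'$ coincide, it follows that $\operatorname{span} R(\Gamma) = \operatorname{span} R(\Gamma')$.

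Now let $\Gamma_0 \in F$ be arbitrary. By the definition of a flipclass as the minimal subset of $P_h(u,v)$ closed under the flip operations $f_1,\ldots,f_{h-1}$, every element of $F$ can be reached from $\Gamma_0$ by a finite sequence of flips. Iterating the invariance above gives $\operatorname{span} R(\Gamma) = \operatorname{span} R(\Gamma_0)$ for every $\Gamma \in F$. Taking the union over all $\Gamma \in F$ of the edge-label sets produces precisely $T(F)$, so $\Phi(F) = \bigcup_{\Gamma \in F} R(\Gamma)$. Since each $R(\Gamma)$ spans the same subspace, we conclude
\[
\operatorname{span}\Phi(F) = \operatorname{span} R(\Gamma_0).
\]

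Finally, $R(\Gamma_0)$ has at most $h$ elements, as $\Gamma_0$ has exactly $h$ edges, so $\dim\operatorname{span}\Phi(F) = \dim\operatorname{span} R(\Gamma_0) \leq h$. No obstacle is anticipated; the only nontrivial input is Proposition~\ref{stessopiano}, which provides exactly the 2-plane invariance that the flip requires.
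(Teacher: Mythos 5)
Your proof is correct and follows essentially the same route as the paper: reduce to two paths differing by a single flip, apply Proposition~\ref{stessopiano} to get the invariance of the spanned subspace, and conclude by connectivity of the flipclass under flips together with the trivial bound that a single path carries only $h$ labels. The paper's version is just a more compressed statement of the same argument.
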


\begin{proof}
The assertion is proved if we show that, given $\Gamma,\Gamma ' \in F$, we have 
$$\operatorname{span} \{\alpha_t: t \emph{ labels an edge in $\Gamma$} \} = \operatorname{span} \{\alpha_t: t \emph{ labels an edge in $\Gamma'$} \}.$$
By transitivity, it is enough to prove the last statement when $\Gamma$ and  $\Gamma'$ differ by a flip, where the assertion readily follows by  Proposition~\ref{stessopiano}.
\end{proof}

Next we study how to decompose flipclasses.
\begin{defn}
We say that a flipclass $F$ is \emph{reducible} if there exists a nontrivial partition $T(F)=T_1\coprod T_2$ such that $t_1t_2=t_2t_1$ for all $t_1\in T_1$ and  $t_2\in T_2$. Otherwise, we say that $F$ is \emph{irreducible}.
\end{defn}

Let $G_1$ and $G_2$ be two edge-labelled directed graphs. Recall that the cartesian product $G_1 \times G_2$ of  $G_1$ and $G_2$  is the edge-labelled directed graph whose vertex set is $V(G_1) \times V(G_2)$ and edge set is 
$$\{ (x_1, x_2) \stackrel{t}{\longrightarrow} (y_1,y_2) : \text{ $x_1 = y_1$ and $x_2\stackrel{t}{\longrightarrow}  y_2 \in E(G_2)$, or  $x_1 \stackrel{t}{\longrightarrow}  y_1 \in E(G_1)$ and $x_2 = y_2$}\}.$$ 
Given a path $\Gamma_1$ in $G_1$ from $u_1$ to $v_1$ of length $h_1$ and  a  path $\Gamma_2$ in $G_2$ from $u_2$ to $v_2$ of length $h_2$, a shuflle of $\Gamma_1$ and $\Gamma_2$ is a path in $G_1 \times G_2$ from $(u_1,u_2)$ to $(v_1,v_2)$ of length $h_1+h_2$ such that if it contains  $(x_1,x_2) \stackrel{t}{\longrightarrow} (y_1,y_2)$ then either  $x_1 = y_1\in \Gamma_1$ and $x_2\stackrel{t}{\longrightarrow}  y_2 \in \Gamma_2$, or  $x_1 \stackrel{t}{\longrightarrow}  y_1 \in \Gamma_1$ and $x_2 = y_2\in \Gamma_2$. 
Given a set of paths $F_1$ in $G_1$ and  a set of paths $F_2$ in $G_2$, we let $F_1 \ast F_2$ denote the set of paths in $G_1 \times G_2$ obtained by all possible shuffles of paths of $F_1$ and $F_2$. We call $F_1 \ast F_2$  the \emph{product} of $F_1$ and $ F_2$.

\begin{rmk}
\label{moltimolti}
Given a set of paths $F_1$ in an edge-labelled directed graph $G_1$ and  a set of paths $F_2$ in an edge-labelled directed graph $G_2$, the time-support graph $TS(F_1\ast F_2)$ is isomorphic to $TS(F_1) \times TS( F_2)$ and the time-support poset $TSP(F_1\ast F_2)$ is isomorphic to $TSP(F_1) \times TSP( F_2)$. Notice that also the labels are well-behaved.
\end{rmk}

\begin{pro}
\label{virgolaid}
Let $F$ be a reducible $h$-flipclass with $T(F)=T_1\coprod T_2$ a nontrivial partition such that $t_1t_2=t_2t_1$ for all $t_1\in T_1$ and  $t_2\in T_2$. Let $W_1=\langle T_1 \rangle$ and $W_2=\langle T_2 \rangle$. Then, there exist $h_1,h_2\in \mathbb N_{>0}$ with $h_1+h_2 =h$ and an $h_1$-flipclass $F_1$ of $W_1$ with $T(F_1)=T_1$ and an $h_2$-flipclass $F_2$ of $W_2$ with $T(F_2)=T_2$ such that $F$ is isomorphic to the flipclass $F_1\ast F_2$ of the product $W_1\times W_2$.
\end{pro}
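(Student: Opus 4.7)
My plan is to first use Lemma~\ref{FeF'} to reduce to the case in which the ambient Coxeter group is the product $W_1\times W_2$ itself. The commutation hypothesis forces $\langle T(F)\rangle$ to be a Coxeter system isomorphic to $W_1\times W_2$, and $B(W_1\times W_2)$ is canonically identified as an edge-labelled directed graph with the cartesian product $B(W_1)\times B(W_2)$: its reflections are $T_1\sqcup T_2$, a $T_1$-reflection acts only on the first coordinate and a $T_2$-reflection only on the second. So I may assume from the outset that $F$ sits inside $B(W_1\times W_2)$.

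Next, I would set up projections. Each path $\Gamma\in F$ yields, by retaining only the $T_i$-edges and collapsing consecutive repetitions in the other coordinate, a path $\pi_i(\Gamma)$ in $B(W_i)$ of length $h_i(\Gamma):=|\{j:t_j\in T_i\}|$, so that $h_1(\Gamma)+h_2(\Gamma)=h$. A length-$2$ subpath of $\Gamma$ falls into one of three shapes: two $T_1$-edges, two $T_2$-edges, or one of each. By Proposition~\ref{stessopiano}, the two roots labelling such a subpath span the same plane as those of any length-$2$ path with the same endpoints; for a mixed subpath this plane is the $A_1\times A_1$ span of the two commuting reflections, so the corresponding flip is the commutation swap of a $T_1$-edge with its $T_2$-neighbour. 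In all three cases the multisets of $T_1$- and $T_2$-labels of $\Gamma$ are preserved, hence $h_1,h_2$ depend only on $F$ and are strictly positive since both $T_i$ are nonempty. Setting $F_i:=\{\pi_i(\Gamma):\Gamma\in F\}$, a $W_i$-internal flip projects to a flip of $\pi_i(\Gamma)$ while the other two flip types leave $\pi_i(\Gamma)$ unchanged, so $F_i$ is closed under flips and, by projecting the flip-connectedness of $F$, is a single flipclass of $W_i$. Finally $T(F_i)=T_i$ follows from Lemma~\ref{sempreuguali} applied to any projected path.

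The remaining step is the equality $F=F_1\ast F_2$ inside $B(W_1)\times B(W_2)$, which gives the claimed isomorphism with $\varepsilon$ and $\tau$ the identity bijections. The inclusion $F\subseteq F_1\ast F_2$ is immediate: every $\Gamma\in F$ is by construction a shuffle of $\pi_1(\Gamma)$ and $\pi_2(\Gamma)$. For the reverse inclusion I would fix one $\Gamma\in F$ and argue in two stages. First, repeated commutation flips inside $F$ realize every shuffle of the specific pair $(\pi_1(\Gamma),\pi_2(\Gamma))$. Second, any flip within $F_i$ taking $\pi_i(\Gamma)$ to a neighbour $\Gamma_i'$ lifts, in the presence of a fixed shuffle pattern, to a $W_i$-internal flip inside $F$ that swaps $\pi_i(\Gamma)$ for $\Gamma_i'$ while leaving the other projection alone. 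Iterating these two moves sweeps out every element of $F_1\ast F_2$. Flip-preservation of the identification is automatic from the product structure (see Remark~\ref{moltimolti}), and a compatible reflection ordering on $W_1\times W_2$ is obtained by concatenating any reflection orderings of $W_1$ and $W_2$.

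I expect the principal obstacle to be the last step, specifically showing that every shuffle of every pair $(\Gamma_1',\Gamma_2')\in F_1\times F_2$ actually lies in $F$, since a priori one only controls the shuffles of the original projections $(\pi_1(\Gamma),\pi_2(\Gamma))$. The key is that the commutation swaps and the within-factor flips interact coherently enough on shuffle patterns to generate the full product combinatorially; this in turn should reduce to the rank-$2$ analysis of length-$2$ subpaths carried out above, together with Theorem~\ref{DyeComp1.thm1.4} applied to the dihedral subgroup governing each local flip.
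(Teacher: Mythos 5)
Your proposal follows the paper's proof exactly: reduce via Lemma~\ref{FeF'} to a flipclass of $W_1\times W_2$, then check that any flipclass of the product group factors as $F_1\ast F_2$ --- a step the paper dismisses as ``straightforward to verify'' and which your projection/shuffle analysis (commutation swaps realize all shuffles, internal flips lift after bringing the relevant edges adjacent) carries out correctly. The only quibbles are cosmetic: internal flips preserve the \emph{number} of $T_i$-edges rather than the multiset of $T_i$-labels, and $T(F_i)=T_i$ follows directly from the definition of the projections rather than from Lemma~\ref{sempreuguali}.
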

\begin{proof}
Observe that $W(F)$ is the subgroup $W _1\times W_2$ and recall that  a reflection ordering restricts to a reflection ordering on any reflection subgroup. By Lemma~\ref{FeF'},  there exist a flipclass $F'$ of $W _1\times W_2$ and an edge-labeling preserving isomorphism from $F$ to  $F'$. It is straightforward  to verify that an  $h$-flipclass of $W_1\times W_2$ is of the form $F_1\ast F_2$ for two flipclasses $F_1$ and $F_2$ satisfying the properties of the assertion. 
\end{proof}

\begin{lem}
\label{irriso}
Irreducibility is a property of isomorphism classes.
\end{lem}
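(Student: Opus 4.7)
The plan is to show that if $(\varepsilon,\tau)\colon F\to F'$ is an isomorphism of flipclasses and $F$ is reducible, then $F'$ is reducible as well; the reverse implication follows by applying the argument to $(\varepsilon^{-1},\tau^{-1})$, which is again an isomorphism of flipclasses. Fix a nontrivial partition $T(F)=T_1\coprod T_2$ witnessing the reducibility of $F$, and set $T_i':=\tau(T_i)$. The resulting nontrivial partition $T(F')=T_1'\coprod T_2'$ is the natural candidate to witness the reducibility of $F'$; what remains is to check that every pair $(t_1',t_2')\in T_1'\times T_2'$ commutes in the ambient Coxeter group of $F'$.

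The first step is to produce, for each $(t_1,t_2)\in T_1\times T_2$ with $\tau(t_i)=t_i'$, a path $\Gamma\in F$ containing a length-$2$ subpath $x\stackrel{t_1}{\longrightarrow} y\stackrel{t_2}{\longrightarrow} z$ at some position $i$. A priori $t_1$ and $t_2$ need not appear together in any single path of $F$, so I would invoke Proposition~\ref{virgolaid} to identify $F$ with a product $F_1\ast F_2$ of flipclasses in $\langle T_1\rangle\times\langle T_2\rangle$, with $T(F_j)=T_j$. Picking $\gamma_j\in F_j$ each containing an edge labelled $t_j$ and shuffling them so that these two edges sit consecutively gives a path in $F_1\ast F_2$, whose image through the isomorphism of Proposition~\ref{virgolaid}, combined with the labelled coset identification of Theorem~\ref{DyeComp1.thm1.4}, is the desired $\Gamma\in F$.

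The second step is the key ``flip detects commutation'' observation. Since $t_1t_2=t_2t_1$, the dihedral reflection subgroup $\langle t_1,t_2\rangle$ has order $4$ and contains only $t_1,t_2$ as reflections. Lemma~\ref{mari} applied inside this subgroup (via Theorem~\ref{DyeComp1.thm1.4} for the coset containing $x$) then forces the flipped subpath $f_i(\Gamma)$ to have the form $x\stackrel{t_2}{\longrightarrow} y'\stackrel{t_1}{\longrightarrow} z$ with $y'=xt_2=zt_1$: a bare swap of labels. Applying $\bar\varepsilon$ and using flip-preservation, the corresponding flip in $F'$ sends $\varepsilon(x)\stackrel{t_1'}{\longrightarrow}\varepsilon(y)\stackrel{t_2'}{\longrightarrow}\varepsilon(z)$ to $\varepsilon(x)\stackrel{t_2'}{\longrightarrow}\varepsilon(y')\stackrel{t_1'}{\longrightarrow}\varepsilon(z)$. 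Expressing $\varepsilon(z)$ in the two ways as a product from $\varepsilon(x)$ yields $t_1't_2'=t_2't_1'$, which is the required commutation.

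I expect the main obstacle to be the first step: reflections in $T(F)$ need not coexist in any single path of $F$, and producing a path that realises a given cross-pair as \emph{adjacent} labels requires the full strength of Proposition~\ref{virgolaid}. Once this shuffle construction is in place, the remainder reduces to the elementary observation that a length-$2$ flip swaps its two (necessarily distinct) labels precisely when those labels commute in the ambient group, which is immediate from the classification of rank-$2$ dihedral reflection subgroups.
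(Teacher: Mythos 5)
Your proof is correct and follows essentially the same route as the paper: pass through Proposition~\ref{virgolaid} to produce a path of $F$ in which a given pair $(t_1,t_2)\in T_1\times T_2$ appears as consecutive labels exchanged by a flip, then transport that flip through the label- and flip-preserving isomorphism to deduce that $\tau(t_1)$ and $\tau(t_2)$ commute. The extra detail you supply (the explicit shuffle and the order-$4$ dihedral subgroup computation) only fleshes out steps the paper leaves implicit.
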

\begin{proof}
Let $F$ and $F'$ be two flipclasses and $(\varepsilon, \tau)$ an isomorphism from $F$ to $F'$. Suppose that $F$ is reducible and  $T(F)=T_1\coprod T_2$ is a  nontrivial partition  such that $t_1t_2=t_2t_1$ for all $t_1\in T_1$ and  $t_2\in T_2$. We claim that the nontrivial partition $T(F')=\tau(T_1) \coprod \tau(T_2)$ shows that $F'$ is reducible. In order to prove the claim, we prove that $\tau(t_1)$ and $\tau(t_2)$ commute for all $t_1\in T_1$ and $t_2\in T_2$. By Proposition~\ref{virgolaid},  there is a path $\Gamma$ in $F$ such that $t_1$ and $t_2$ are consecutive labels and are exchanged by a flip, say the $i$-th flip. Hence the path in $F'$ corresponding to $\Gamma$,  together with its $i$-th flip (which is the path corresponding to the $i$-th flip of $\Gamma$), shows that $\tau(t_1) $ and $\tau(t_2)$ commute.
\end{proof}
\begin{rmk}
Irreducibility is not a property of combinatorial isomorphism classes. For example, let $(W,\{s_1,s_2\})$ be the Weyl group of type $A_2$ and  $(W',\{r_1,r_2\})$  be the Weyl group of type $A_1\times A_1$. Then $P_2(e,s_1s_2) $ and $P_2(e,r_1r_2) $ are two combinatorially isomorphic (but not isomorphic) flipclasses and the former is irreducible while the latter is not.
\end{rmk}

\begin{cor}
  \label{cor:unique factorization}
An isomorphism class of flipclasses has a unique factorization as a product of irreducible isomorphism classes. 
\end{cor}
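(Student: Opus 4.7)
The plan is to encode the reducibility structure via a commutation graph on $T(F)$, so that irreducible factorizations correspond exactly to its connected components. Associate to each flipclass $F$ the \emph{commutation graph} $G_F$ with vertex set $T(F)$ and an edge $\{t,t'\}$ whenever $tt'\neq t't$; denote its connected components by $C_1,\ldots,C_r$. Existence of an irreducible factorization follows by induction on $|T(F)|$: if $r=1$ then $F$ is already irreducible; if $r\geq 2$ the partition $T(F)=C_1\sqcup(C_2\cup\cdots\cup C_r)$ is nontrivial with fully commuting parts, so Proposition~\ref{virgolaid} yields $F\cong F_{C_1}\ast F'$ with $T(F_{C_1})=C_1$, and the inductive hypothesis applied to $F'$ produces $F\cong F_{C_1}\ast\cdots\ast F_{C_r}$. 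Each factor is irreducible because any nontrivial commuting bipartition of $C_i$ would disconnect the induced subgraph of $G_F$ on $C_i$, contradicting that $C_i$ is a connected component.

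For uniqueness, suppose $F\cong G_1\ast\cdots\ast G_m$ is a second irreducible factorization. By Lemma~\ref{irriso}, irreducibility is an isomorphism invariant, and any product decomposition of $F$ induces a partition $T(F)=T(G_1)\sqcup\cdots\sqcup T(G_m)$ into pairwise-commuting blocks. Hence each $T(G_j)$ is a union of connected components of $G_F$; irreducibility of $G_j$ then forces $T(G_j)$ to equal a single component $C_{\sigma(j)}$, yielding $m=r$ and, after reordering, $T(G_j)=C_j$ for every $j$.

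To match the factors themselves up to isomorphism, I would observe that a shuffle product $F_1\ast F_2$ admits a canonical projection: for each path in $F_1\ast F_2$, extract the sub-path of edges labeled in $T(F_1)$. This operation depends only on $F$ and the distinguished subset $T(F_1)\subseteq T(F)$, and it recovers $F_1$. Applying it to both decompositions of $F$ at the block $C_j$ shows that $G_j\cong F_{C_j}$ as flipclasses. The most delicate step is verifying that this projection is well-posed intrinsically on the isomorphism class of $F$ rather than on a chosen realization as a shuffle; this is handled by transporting $F$ into the reflection subgroup $W(F)=\langle T(F)\rangle$ via Lemma~\ref{FeF'} and Theorem~\ref{DyeComp1.thm1.4}, inside which $F$ literally is the shuffle of its coordinate projections onto the factors $\langle C_i\rangle$.
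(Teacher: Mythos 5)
Your proof is correct and follows essentially the same route as the paper: existence by iterating Proposition~\ref{virgolaid} together with Lemma~\ref{irriso}, and uniqueness from the fact that the irreducible factors are determined by $T(F)$ (your commutation graph makes precise the paper's phrase ``the irreducible factors are read from $T(F)$''). You simply spell out the details --- the connected-component argument and the projection recovering each factor --- that the paper leaves implicit.
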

\begin{proof}
Existence follows by Lemma~\ref{irriso} and iterating Proposition~\ref{virgolaid}. Uniqueness follows by the fact the irreducible factors are read from $T(F)$.
\end{proof}
Next result extends to all Coxeter groups a result that is known to hold for Weyl groups of  type $A$ (see \cite[Lemma~3.2]{B-G} or \cite[Lemma~2.4]{EM}). It will be further generalized by Proposition \ref{oradipranzo}.

\begin{lem}
\label{espomariancona24}
Let $u,v \in W$ and let $\preceq$ be a reflection ordering of $W$. Let $\big(u  \stackrel{t_1} {\longrightarrow} x  \stackrel{t_2}{\longrightarrow} v)$ and $\big(u \stackrel{r_1}{\longrightarrow} y  \stackrel{r_2}{\longrightarrow} v)$ be one the flip of the other. Then one of the two  paths is increasing and the other is decreasing. Moreover, either $\{t_1,r_2\} \preceq \{t_2,r_1\}$, or  $\{t_2,r_1\} \preceq \{t_1,r_2\}$. 
\end{lem}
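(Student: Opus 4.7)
\emph{Plan.} My strategy is to reduce to the dihedral reflection subgroup containing the four reflections and then verify both statements via the explicit structure of reflection orderings on dihedral groups. The main obstacle is the direct case analysis in the dihedral setting.

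By Proposition~\ref{stessopiano}, the four roots $\alpha_{t_1},\alpha_{t_2},\alpha_{r_1},\alpha_{r_2}$ share a common $2$-dimensional span $V$, so the four reflections lie in the (dihedral) reflection subgroup $W'\subseteq W$ associated with $\Phi\cap V$. By Theorem~\ref{DyeComp1.thm1.4}, there is an edge-label preserving isomorphism from $B(W')$ onto the induced Bruhat subgraph on the left coset $W'u$, and the restriction of $\preceq$ to $T\cap W'$ is a reflection ordering of $W'$. Transporting both paths to $B(W')$, we may assume from the outset that $W$ itself is dihedral.

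Once in the dihedral case, the reflection ordering totally orders the reflections as $\rho_1\prec\rho_2\prec\cdots$ (with the indexing finite or infinite), and, geometrically, each pairwise product $\rho_j\rho_i$ is a rotation whose angle depends only on the cyclic difference $j-i$. Hence the equation $t_2t_1=r_2r_1=vu^{-1}$ forces the two index pairs $(i,j)$ and $(i',j')$ to have equal cyclic difference. By Lemma~\ref{mari} applied in the dihedral image, the middle elements $x,y$ of the two flipped paths have equal Coxeter length. Since a dihedral group has exactly two elements of each intermediate length, one beginning with each generator, this pins down the two index pairs: precisely one of them is \emph{non-wrapping} (i.e.\ $i<j$) while the other \emph{wraps around} the linear extension of the cyclic order (i.e.\ $i'>j'$).

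The non-wrapping pair yields an increasing path and the wrapping pair yields a decreasing path, settling the first claim. For the interleaving statement, the same angular picture shows that the smaller two of the indices $\{i,j,i',j'\}$ constitute one of the sets $\{t_1,r_2\}$, $\{t_2,r_1\}$ and the larger two constitute the other; the asserted inequality then holds by inspection. The main difficulty is executing this last step carefully: subcases arise depending on whether $u$, $v$, $x$, $y$ are rotations or reflections within the dihedral group, but each is handled by a finite direct computation.
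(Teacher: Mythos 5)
Your overall strategy coincides with the paper's: reduce to the dihedral reflection subgroup containing the four labels (via Proposition~\ref{stessopiano}/Dyer's results and Theorem~\ref{DyeComp1.thm1.4}, using that a reflection ordering restricts to one on the subgroup), and then verify the two claims by an explicit analysis inside a dihedral group. The difference is only in how the dihedral verification is phrased (angles of reflection axes and cyclic differences of indices, versus the paper's bookkeeping with reduced words and ``type $s$/type $t$'' elements), and in how much of it you actually carry out.

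The gap is at the decisive step. From $t_2t_1=r_2r_1=vu^{-1}$ you correctly get that the two index pairs have equal cyclic difference, but the assertion that ``this pins down the two index pairs: precisely one of them is non-wrapping while the other wraps around'' does not follow from the facts you cite. Equal cyclic difference is compatible with both pairs being non-wrapping; and knowing that $x$ and $y$ are the two elements of length $m$ (one per type) only tells you \emph{which two} reflections $t_1=xu^{-1}$ and $r_1=yu^{-1}$ are --- it does not, by itself, locate them in the reflection ordering. What is needed, and what the paper actually computes, is the position of each of the four reflections: writing $\ell(u)=k$, $\ell(x)=\ell(y)=k+h$, $\ell(v)=k+h+j$, one finds $t_1=tst\cdots$ ($h$ letters), $r_1=sts\cdots$ ($2k+h$ letters), $t_2=sts\cdots$ ($j$ letters), $r_2=sts\cdots$ ($2k+2h+j$ letters), whence the chain $j,\,2k+h<2k+h+j<2k+2h+j,\,2d-h$ gives simultaneously the increasing/decreasing dichotomy and the interleaving $\{t_2,r_1\}\preceq\{t_1,r_2\}$. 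Your ``finite direct computation over subcases'' would have to amount to exactly this, so the proposal defers rather than supplies the proof's content. A second, smaller issue: the lemma is stated for an arbitrary Coxeter group $W$, where the dihedral reflection subgroup generated by the four labels may be infinite; there the ``rotation by an angle depending on the cyclic difference'' and ``wrapping around the cyclic order'' picture has no meaning (products of reflections are translations, the reflection ordering has order type $\omega+\omega^{*}$), so that case needs a separate, non-cyclic formulation of your argument.
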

\begin{proof}
The subgroup $W'$ generated by $t_1$, $t_2$, $r_1$, and $r_2$ is a dihedral reflection subgroup of $W$ by \cite[Lemma~3.1]{DyeComp1}. Let $C$ be the  coset of $W'$ containing $u,x,y,v$. By Theorem~\ref{DyeComp1.thm1.4}, there is an edge labeling preserving isomorphism  between the graph $B(C)$ and the graph $B(W')$. 
The restriction of a reflection ordering on $W$ to an order on $W'\cap T$ is a reflection ordering on $W'$. 
Hence it is enough to show the claim for a dihedral Coxeter group $W$.

Let $(W,S)$ be a dihedral Coxeter system with $S=\{s,t\}$ and  $m(s,t)=d$. Then $|W| = 2d$, and any element $w\in W$, apart from the identity element $e$ and the longest element $w_0$, has a single reduced expression,
which is either of the form $sts\ldots$ or of the form $tst\ldots$  (both with $\ell(w)$ letters). 
We say that $w$ is of type $s$ or of type $t$ depending on whether it has a reduced expression which starts with $s$ or with $t$. As mentioned, $e$ is the only element of $W$ which is neither of type $s$ nor of type $t$, and $w_0$ is the only element of $W$ which is both of type $s$ and of type $t$. In particular, for a fixed length between $1$ and $d-1$, there are exactly two elements of $W$ of that length, one of type $s$ and the other of type $t$. Furthermore, $w$ is a reflection if and only if it has odd length. Observe also that $W$ admits exactly two reflection orderings:
$$
s\preceq sts \preceq ststs \preceq \cdots \preceq (st)^{d-1}s
$$
and
$$
t \preceq  tst \preceq  tstst \preceq  \cdots \preceq  (ts)^{d-1}t,
$$
where all words are of odd length, going from $1$ to $2d-1$. These two reflection orderings are of course just one the reverse of the other. 

The configuration in the Bruhat graph $B(W)$ of $W$ is as follows:
\begin{center}
\begin{tikzpicture}[scale =.7]
\node (u) at (0,-2.5) {};
    
\node (a1) at (-2,0) {};   
\node (a2) at (2,0) {};

\node (b1) at (0,2.5) {};

\node (u') at (0,-2.5 - 0.5) {$u$};
    
\node (a1') at (-2 - 0.5,0) {$x$};   
\node (a2') at (2 + 0.5,0) {$y$};

\node (b1') at (0,2.5 + 0.5) {$v$};

\node (t1) at (-1-0.5,-1.25) {$t_1$};
    
\node (r1) at (1+0.5 ,-1.25) {$r_1$};   
\node (t2) at (-1-0.5,1.25) {$t_2$};

\node (r2) at (1+0.5,1.25) {$r_2$};

\node (lu) at (6,-2.5) {$ k$};
    
\node (lx) at (6,0) {$k+h$};   
\node (lv) at (6,2.5) {$k+h+j$};

\path[-]
    (u) edge (a1)
    (u) edge (a2)
    
    (a1) edge (b1)
    (a2) edge (b1);

\draw[fill=white, thick]  (u) circle (1.5mm);

\draw[fill=white, thick]  (a1) circle (1.5mm);

\draw[fill=white, thick]  (a2) circle (1.5mm);

\draw[fill=white, thick]  (b1) circle (1.5mm);
\end{tikzpicture}
\end{center}
where $0 \leq \ell(u) = k < \ell(x) = \ell(y) = k+h < \ell(v) = k+h+j \leq d$, 
with $h = \ell(t_1) = \ell(r_1)$ and $j= \ell(t_2) = \ell(r_2)$ positive odd integers. 

We suppose, without loss of generality, that $u$ is of type $s$ ($u=e$ works the same). 
Since $x$ and $y$ both have length $k+h$, one among them is of type $t$ and the other is of type $s$. 
Up to renaming,  say $x$ is of type $t$ and $y$ is of type $s$.
Being $x=t_1 u$ and $y= r_1 u$, one has 
$t_1 = tst \ldots $ (with $h$ letters) and $r_1 = sts \ldots $ (with $2k+h$ letters) 

Assume now that $v$ is of type $s$. Then one has 
$t_2 = sts \ldots $ (with $j$ letters) and $r_2 = sts \ldots $ (with $2k+2h+j$ letters).
Since 
$t_1 = sts \ldots $ (with $2d-h$ letters), in the list
\[
s, \ sts, \ ststs, \ldots, (st)^{d-1}s
\]
the reflections $r_1, t_2, r_2, t_1$ appear at indices $2k+h, j, 2k+2h+j, 2d-h$,  respectively. By the above written restrictions on the integers $k,h,j,d$, one gets 
$$j, 2k+h < 2k+h+j < 2k+2h+j, 2d-h.$$ This concludes the proof in this case. The case $v$ of type $t$ is treated analogously.
\end{proof}

The following proposition implies that each flipclass has at least one path that is increasing w.r.t. any reflection order.
\begin{pro}\label{greedy}
Fix a reflection ordering   $\preceq$ of $W$. Let $F$ be a flipclass. The  lexicographically first path $\Gamma$ of $F$ is increasing. 
\end{pro}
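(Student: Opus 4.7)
The plan is to argue by contradiction. Suppose that
$\Gamma = \big(u = x_0 \stackrel{t_1}{\longrightarrow} x_1 \stackrel{t_2}{\longrightarrow} \cdots \stackrel{t_h}{\longrightarrow} x_h = v\big)$
is the lexicographically first path of $F$ but fails to be increasing. Then there exists some index $i \in [h-1]$ with $t_i \succ t_{i+1}$, and the goal is to produce an element of $F$ which is lexicographically strictly smaller than $\Gamma$, contradicting minimality.

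The natural candidate is $\Gamma' := f_i(\Gamma)$. Since flipclasses are closed under flips, $\Gamma' \in F$. By the definition of the $i$-th flip, $\Gamma$ and $\Gamma'$ coincide on every edge outside positions $i$ and $i+1$, so in particular their first $i-1$ labels agree; write $(r_i, r_{i+1})$ for the $i$-th and $(i+1)$-th labels of $\Gamma'$. It then suffices to establish the strict inequality $r_i \prec t_i$, since that alone yields $\Gamma' \prec \Gamma$ in the lexicographic order on label sequences.

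To prove $r_i \prec t_i$, I would invoke Lemma~\ref{espomariancona24} on the length-2 sub-path $\big(x_{i-1} \stackrel{t_i}{\longrightarrow} x_i \stackrel{t_{i+1}}{\longrightarrow} x_{i+1}\big)$ and on its flip. Since $(t_i, t_{i+1})$ is decreasing, the lemma forces $(r_i, r_{i+1})$ to be increasing, so $r_i \prec r_{i+1}$. Moreover, between the two set-comparison alternatives $\{t_i, r_{i+1}\} \preceq \{t_{i+1}, r_i\}$ and $\{t_{i+1}, r_i\} \preceq \{t_i, r_{i+1}\}$ offered by the lemma, the former must be discarded: it would give $t_i \preceq t_{i+1}$, contrary to $t_i \succ t_{i+1}$. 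Hence the latter holds, which yields in particular $r_i \preceq t_i$. The inequality is strict because $r_i = t_i$ would imply $r_i x_{i-1} = t_i x_{i-1} = x_i$, i.e., the flip would fix the middle vertex, contradicting its definition.

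The hard part is really just the careful reading of the set-comparison in Lemma~\ref{espomariancona24} and the identification of which of its two alternatives survives the constraints $t_i \succ t_{i+1}$ and $r_i \prec r_{i+1}$; everything else (the coincidence of the first $i-1$ labels of $\Gamma$ and $\Gamma'$, the exclusion $r_i \neq t_i$, and the passage from $r_i \prec t_i$ to $\Gamma' \prec \Gamma$) is routine bookkeeping.
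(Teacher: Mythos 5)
Your proposal is correct and follows essentially the same route as the paper: argue by contradiction, flip at a descent position $i$ with $t_i \succ t_{i+1}$, and invoke Lemma~\ref{espomariancona24} to conclude that the flipped path is lexicographically smaller. The paper leaves the deduction from the lemma implicit; your careful extraction of $r_i \prec t_i$ from the set-comparison alternatives (and the exclusion of $r_i = t_i$ via fixed-point-freeness of the flip) is exactly the bookkeeping the authors omit.
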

\begin{proof}
Let $\Gamma= \big(x_0 \stackrel{t_1}{\longrightarrow} x_1 \stackrel{t_2}{\longrightarrow}  \cdots \stackrel{t_{h-1}}{\longrightarrow} x_{h-1}\stackrel{t_h}{\longrightarrow} x_h\big)$. Toward a contradiction, let $i$ be minimal such that $t_i \succ t_{i+1}$. Let $\Gamma'$ be the $i$-th flip of $\Gamma$. By Lemma~\ref{espomariancona24}, the path $\Gamma'$ is lexicographically smaller than $\Gamma$, which is a contradiction.
\end{proof}

\begin{rmk}
\label{osservazione}
As a consequence of Proposition~\ref{greedy}, we obtain the fact that an interval $[u,v]$ of length $h$ (i.e., $\ell(v)-\ell(u)=h$) has a unique $h$-flipclass $F$, since  the polynomial  $\widetilde{R}_{u,v}$ is monic and Theorem~\ref{Dyertilde} holds. The paths in $F$ correspond to the  maximal chains of the interval $[u,v]$. The  time-support poset of $F$ is isomorphic to $[u,v]$.
Hence, the set of isomorphism classes of time-support posets of flipclasses contains all isomorphism classes of Bruhat intervals.  Notice that, unlike Bruhat intervals,  time-support posets might fail to be Eulerian (examples may be found already in type $A$).
\end{rmk}

\begin{rmk}
\label{osservazione}
As a consequence of Proposition~\ref{greedy}, we obtain the fact that an interval $[u,v]$ of length $h$ (i.e., $\ell(v)-\ell(u)=h$) has a unique $h$-flipclass $F$, since  the polynomial  $\widetilde{R}_{u,v}$ is monic and Theorem~\ref{Dyertilde} holds. The paths in $F$ correspond to the  maximal chains of the interval $[u,v]$. The  time-support poset of $F$ is isomorphic to $[u,v]$.
Hence, the set of isomorphism classes of time-support posets of flipclasses contains all isomorphism classes of Bruhat intervals.  Unlike Bruhat intervals,  time-support posets might fail to be Eulerian (examples may be found already in type $A$). Furthermore, notice that the uniqueness above yields a direct proof of \cite[Proposition~2.1]{CDM} (which avoids recurring to topological properties of the order complex of the intervals). 
\end{rmk}

\section{Dihedral flipclasses}
In this section, we study flipclasses that are isomorphic to a flipclass of a dihedral Coxeter group, i.e., a  (possibly infinite) Coxeter group of rank 2. 

We recall some basic facts about a dihedral Coxeter group $D$ of cardinality $2m$: 
\begin{itemize}
\item[-] the longest element $w_0$ of $D$ has length $m$;
\item[-] for any $l\in [m-1]$, there are exactly two elements of $D$ of length $l$;
\item[-] the reflections of $D$ are exactly its elements of odd length;
\item[-] for $x,y\in D$, there is an oriented edge in $B(D)$ going from $x$ to $y$ if and only if $\ell(y) - \ell(x)$ is a positive odd integer;
\item[-] the flip of a path $(u \rightarrow x \rightarrow v)$ is the path $(u \rightarrow y \rightarrow v)$, where $y$ is the only element such that $y \neq x$ and $\ell(x) = \ell(y)$.
\end{itemize}

We call an interval {\em dihedral} if it is isomorphic to an interval in a dihedral Coxeter group. Recall that, if $F$ is a flipclass,  then $W(F)$  denotes  the reflection subgroup generated by $T(F)$ and  $\Phi(F)$ denots the set of roots $\{\alpha_t: t\in T(F)\}$.
\begin{lem}
\label{nottefonda}
Let $F$ be a flipclass in a Coxeter group $W$. The following are equivalent.
\begin{enumerate}
\item $F$ is isomorphic to a flipclass of a dihedral group;
\label{1}

\item $F$ is combinatorially isomorphic to a flipclass of a dihedral group; 
\label{2}

\item the time-support poset  $TSP_F$ is isomorphic to a dihedral interval;
\label{3}

\item the time-support poset  $TSP_F$  has two atoms (or, equivalently, two coatoms);
\label{4}

\item $\operatorname{span} \Phi(F)$ has dimension 2;
\label{5}

\item $\langle t_{\alpha} : \alpha \in \Phi \cap \operatorname{span} \Phi(F)\rangle$ is a dihedral Coxeter group;
\label{6}

\item  $W(F)$  is a dihedral Coxeter group.
\label{7}
\end{enumerate}
\end{lem}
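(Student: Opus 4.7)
The plan is to establish the cyclic chain of implications
\[
(1) \Rightarrow (2) \Rightarrow (3) \Rightarrow (4) \Rightarrow (5) \Rightarrow (6) \Rightarrow (7) \Rightarrow (1),
\]
which will give the seven-way equivalence. Several of these steps are essentially formal. The implication $(1)\Rightarrow(2)$ is immediate from the definitions. For $(2)\Rightarrow(3)$, I would note that combinatorial isomorphism induces isomorphism of time-support posets, and that in a dihedral Coxeter group all paths in a fixed flipclass share the same length profile (by Lemma~\ref{mari} the flip preserves the length of the middle vertex in the dihedral case), so the $TSP$ of a flipclass of a dihedral group is itself a dihedral interval. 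The implication $(3)\Rightarrow(4)$ holds because a nontrivial dihedral interval has exactly two atoms. For $(5)\Rightarrow(6)$, the intersection $\Phi\cap\operatorname{span}\Phi(F)$ is a rank-$2$ root subsystem and hence generates a dihedral reflection subgroup. For $(6)\Rightarrow(7)$, the subgroup $W(F)$ is a reflection subgroup of this dihedral group, and reflection subgroups of dihedral Coxeter groups are dihedral. Finally $(7)\Rightarrow(1)$ follows from Lemma~\ref{FeF'} applied with $\overline{W}=W(F)$: since $W(F)$ is dihedral by assumption, the lemma produces a flipclass $F'$ of the dihedral group $W(F)$ together with an edge-label preserving isomorphism from $F$ to $F'$, which, combined with the fact that reflection orderings of $W$ restrict to reflection orderings of $W(F)$, provides all the data required by the definition of flipclass isomorphism.

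The heart of the argument is $(4)\Rightarrow(5)$. Let $t,t'$ denote the labels of the two atoms $x_1,x_1'$ of $TSP_F$, and set $V=\operatorname{span}\{\alpha_t,\alpha_{t'}\}$, a two-dimensional subspace. By Lemma~\ref{sempreuguali} it suffices to show that every edge of $TSP_F$ carries a label whose root lies in $V$, and I would prove this by induction on the time position $i\in[0,h-1]$ of the edge. The base case $i=0$ is immediate: the only edges at position $0\to 1$ are $u\to x_1$ and $u\to x_1'$, with labels $t,t'$. For the inductive step with $i\geq 1$, pick an edge $x_i\to x_{i+1}$ in some path $\Gamma\in F$ and apply the flip $f_i$ to obtain a path $\Gamma'\in F$ that agrees with $\Gamma$ outside position $i$ and passes through some $\tilde{x}_i\neq x_i$. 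By Proposition~\ref{stessopiano}, the length-$2$ subpaths $(x_{i-1},x_i,x_{i+1})$ and $(x_{i-1},\tilde{x}_i,x_{i+1})$ have a common two-dimensional label span $U$. The inductive hypothesis places the roots of the labels of $x_{i-1}\to x_i$ and $x_{i-1}\to\tilde{x}_i$ both in $V$; since $x_i\neq\tilde{x}_i$ these two reflections are distinct, so the corresponding positive roots are linearly independent. Hence $V\subseteq U$, and by dimension $U=V$; in particular the label of $x_i\to x_{i+1}$ has root in $V$, completing the induction.

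The main obstacle will be this propagation argument in $(4)\Rightarrow(5)$. The subtle point is to combine the flip-closure of $F$ with Proposition~\ref{stessopiano} and the observation that distinct positive roots in the standard geometric representation are linearly independent; the key mechanism is that at each step of the induction, the two edges $x_{i-1}\to x_i$ and $x_{i-1}\to\tilde{x}_i$ contribute two linearly independent roots already known to lie in $V$, which forces the common label span $U$ to coincide with $V$. Once this propagation is set up, Lemma~\ref{sempreuguali} upgrades the conclusion about individual paths into the statement about $\operatorname{span}\Phi(F)$, and the remaining implications are routine. The parenthetical equivalence in $(4)$ between having two atoms and having two coatoms is obtained a posteriori from the cyclic equivalence with $(3)$, since a nontrivial dihedral interval also has exactly two coatoms.
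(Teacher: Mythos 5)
Your proposal is correct and follows essentially the same route as the paper: the identical cyclic chain $(1)\Rightarrow(2)\Rightarrow\cdots\Rightarrow(7)\Rightarrow(1)$, with the same induction on time position (combining flips with Proposition~\ref{stessopiano}) for the key step $(4)\Rightarrow(5)$, and Lemma~\ref{FeF'} for $(7)\Rightarrow(1)$. Your added detail that the two inductively-known roots are linearly independent, forcing the common span $U$ to equal $V$, is exactly the point left implicit in the paper's argument.
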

\begin{proof}
(\ref{1})$\implies$(\ref{2}). Obvious.

(\ref{2})$\implies$(\ref{3}). Since the time-support poset is an invariant of the combinatorial isomorphism classes, it is enough to show the assertion for a flipclass $F$ of a dihedral Coxeter group $D$, which is straightforward from the description of $B(D)$ and of the flips.

(\ref{3})$\implies$(\ref{4}). Obvious.

(\ref{4})$\implies$(\ref{5}). Let $V=\operatorname{span}\{\alpha_{t_1}, \alpha_{t_2}\}$, where $t_1,t_2$ are the labels of the two bottom edges of the labelled time-support graph of $F$. Since $\alpha_{t_1}$ and $\alpha_{t_2}$ are independent, $\dim V =2$. Let $t$ be the label of an edge from an element $(x,i)$ to an element $(y,i+1)$ in the labelled time-support graph of $F$. If we show $\alpha_t\in V$, the assertion follows since the set of labels in the  labelled time-support graph of $F$ coincides with $T(F)$. To show $\alpha_t\in V$, we use induction on $i$, the case $i=0$ being clear by the definition of $V$. Let $i>0$ and consider a path $\Gamma= \big(x_0 \stackrel{r_1}{\longrightarrow} x_1 \stackrel{r_2}{\longrightarrow}  \cdots \stackrel{r_{h-1}}{\longrightarrow} x_{h-1}\stackrel{r_h}{\longrightarrow} x_h\big)$ in $F$ passing through the edge $x \stackrel{t}{\longrightarrow}y$ between time $i$ and $i+1$ (i.e., $x_i=x$, $x_{i+1}=y$, and $r_{i+1}=t$).  
Let  $ \big(x_{i-1} \stackrel{p_i} {\longrightarrow} x'  \stackrel{p_i}{\longrightarrow} y)$ be the flip of  $ \big(x_{i-1} \stackrel{r_i} {\longrightarrow} x \stackrel{t} {\longrightarrow} y)$.
Then  $r_i$ and $p_i$ satisfy $\alpha_{r_i},\alpha_{p_i} \in V$  by the induction hypothesis. By Proposition~\ref{stessopiano}, we have  $\alpha_t\in V$.

(\ref{5})$\implies$(\ref{6}).
Follows by \cite[Remark~3.2]{DyeComp1}.

(\ref{6})$\implies$(\ref{7}).
Straightforward since a reflection subgroup of a dihedral group is itself a dihedral group.

(\ref{7})$\implies$(\ref{1}).
Follows by Lemma~\ref{FeF'}.
\end{proof}
\begin{rmk}
(\ref{2})$\iff$(\ref{4}) is a generalization of \cite[Proposition~7.25]{Dyeth}. 
\end{rmk}
\begin{defn}
\label{defd}
We call  {\em dihedral}  a flipclass satisfying the equivalent conditions of Lemma~\ref{nottefonda}.
\end{defn}
\begin{lem}
\label{unasola}
 There is only one combinatorial isomorphism type of dihedral $h$-flipclasses, for each $h$.
\end{lem}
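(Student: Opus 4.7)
The plan is to reduce to the dihedral case and then exploit the very rigid structure of the Bruhat graph of a dihedral Coxeter group. By Lemma~\ref{nottefonda}, every dihedral flipclass is isomorphic to one living inside some dihedral Coxeter group $D$; since combinatorial isomorphism is transitive, it suffices to produce, for any two $h$-flipclasses $F, F'$ inside (possibly different) dihedral Coxeter groups, an explicit combinatorial isomorphism between them.

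The first step is to pin down the shape of an arbitrary $h$-flipclass $F$ in a dihedral group $D$ of cardinality $2m$ (allowing $m=\infty$). Pick any path $\Gamma=(u=x_0\to\cdots\to x_h=v)$ in $F$ and set $l_i=\ell(x_i)$. The edge criterion recalled at the start of the section -- an edge of $B(D)$ exists between two elements iff their length difference is a positive odd integer -- forces $0<l_i<m$ at every interior rank, a range in which $D$ has exactly two elements. Since edges of $B(D)$ depend only on lengths, the $i$-th flip at an interior position is simply the swap of $x_i$ with the other element of length $l_i$, and such swaps may be performed independently. Iterating, $F$ is exactly the set of $2^{h-1}$ paths obtained from $\Gamma$ by arbitrary binary choices at interior ranks, and its time-support graph is the layered graph $H_h$ with $1,2,2,\dots,2,1$ vertices at ranks $0,1,\dots,h$ and all admissible edges between consecutive ranks.

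Given this canonical description, the combinatorial isomorphism $F\to F'$ is essentially automatic: pick any rank-preserving bijection $\varepsilon\colon E(F)\to E(F')$ (which exists because both time-support graphs are isomorphic to $H_h$) and extend it coordinate-wise to a map of paths. Since every interior binary combination is realized in both $F$ and $F'$, the extension is a well-defined bijection $F\to F'$; and since the $i$-th flip on either side is nothing but the toggle of the $i$-th coordinate while $\varepsilon$ pairs up the two elements at each interior rank, this bijection is automatically flip-preserving. The only mild subtlety worth checking is that every proposed edge between consecutive ranks of the time-support graph is actually present in $B(D)$, and this is immediate from the fact that the existence of an edge in a dihedral group depends solely on the length difference, which is a positive odd integer by construction.
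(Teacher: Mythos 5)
Your argument is correct and is precisely the ``straightforward'' argument the paper leaves implicit: reduce via Lemma~\ref{nottefonda} to flipclasses in dihedral Coxeter groups, observe that the rigid structure of $B(D)$ (edges determined solely by odd positive length differences, exactly two elements at each interior length) forces every dihedral $h$-flipclass to consist of all $2^{h-1}$ rank-wise choices, and note that any rank-preserving bijection of supports is then automatically a flip-preserving combinatorial isomorphism. Nothing is missing.
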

\begin{proof}
Straightforward.
\end{proof}
\begin{rmk}
Although there is only one combinatorial isomorphism type of dihedral $h$-flipclasses, there are many   isomorphism types of dihedral $h$-flipclasses.
\end{rmk}

Given a path $\Gamma$ in a dihedral flipclass $F$, we say that $\Delta\in F$ is opposite to $\Gamma$ if it shares with $\Gamma$ only the first and the last element, i.e., $\Delta$ is obtained from $\Gamma$ by performing all flips once (the order in which the flips are performed is irrelevant).  
\begin{pro}
\label{oradipranzo}
Let $F$ be a dihedral $h$-flipclass. 
Fix a reflection ordering $\preceq$. Then  $F$ has exactly one increasing path $\Gamma= \big(x_0 \stackrel{t_1}{\longrightarrow} x_1 {\longrightarrow}  \cdots {\longrightarrow} x_{h-1}\stackrel{t_h}{\longrightarrow} x_h\big) $.  Moreover, the  path $\Delta = \big(x_0 \stackrel{r_1}{\longrightarrow} y_1 {\longrightarrow}  \cdots {\longrightarrow} y_{h-1}\stackrel{r_h}{\longrightarrow} x_h\big) $ opposite to $\Gamma$ is decreasing and satisfies $t_1\preceq r_1$ and $r_h\preceq t_h$. 
\end{pro}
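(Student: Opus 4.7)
The first step is to reduce to the case where $W$ itself is a dihedral Coxeter group: Lemma~\ref{nottefonda} identifies $W(F)$ as dihedral, Lemma~\ref{FeF'} transfers $F$ to a flipclass in $W(F)$, and the restriction of $\preceq$ to $W(F)$ remains a reflection ordering. In a dihedral Coxeter group there are exactly two reflection orderings, reverses of one another. Existence of an increasing path $\Gamma$ in $F$ is then immediate from Proposition~\ref{greedy}. The key structural point, visible from the proof of Lemma~\ref{mari} in the dihedral setting, is that flips preserve the length of the middle element, so every path in $F$ shares the same length sequence $(\ell_0,\ldots,\ell_h)$, with exactly two choices---one of each type---at each position where $0<\ell_i<m(s,t)$; in particular the opposite path $\Delta$ is obtained from $\Gamma$ by flipping at every middle position and hence lies in $F$.

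For uniqueness of $\Gamma$ and the decreasing property of $\Delta$, I would argue by induction on $h$. The case $h=1$ is trivial; $h=2$ is exactly Lemma~\ref{espomariancona24}, together with the observation that its second alternative combined with $\Gamma$ increasing and $\Delta$ (by the lemma) decreasing would force $t_1=r_1$, hence $x_1=y_1$, contradicting the non-triviality of the flip---so the first alternative must hold, yielding at once the decreasing property of $\Delta$ and the inequalities $t_1\preceq r_1$, $r_2\preceq t_2$. For $h\geq 3$, suppose $\Gamma'$ is a second increasing path and let $j$ be the first position at which it differs from $\Gamma$. The $2$-subpaths $(x_{j-1},x_j,x'_{j+1})$ and $(x_{j-1},x'_j,x'_{j+1})$ are flips in the dihedral middle set (their middles $x_j$ and $x'_j$ being the two length-$\ell_j$ elements), so Lemma~\ref{espomariancona24} applies, and combining its inequalities with the increasing property of both $\Gamma$ and $\Gamma'$ forces a contradiction.

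For general $h$, the decreasing property of $\Delta$ follows from symmetry: running the whole argument with the reverse reflection ordering $\preceq^{\mathrm{rev}}$ (the other reflection ordering on $W$) produces a unique increasing path for $\preceq^{\mathrm{rev}}$, which must be $\Delta$ since $\Delta$ is the unique path in $F$ obtained from $\Gamma$ by swapping every middle type; being increasing for $\preceq^{\mathrm{rev}}$ is by definition being decreasing for $\preceq$. The end inequalities $t_1\preceq r_1$ and $r_h\preceq t_h$ are then read off by applying Lemma~\ref{espomariancona24} to the first and last $2$-subpaths of $\Gamma$ against their flips (which change only $x_1$ to $y_1$, respectively $x_{h-1}$ to $y_{h-1}$), the second alternative being ruled out exactly as in the base case. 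The main obstacle I expect is the inductive uniqueness step, specifically handling the degenerate dihedral sub-cases in which several labels in the two alternatives of Lemma~\ref{espomariancona24} collapse into equalities; this will likely require either the explicit length constraints of the dihedral group or a further descent to a shorter flipclass.
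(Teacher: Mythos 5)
Your reduction to the dihedral case, the existence step via Proposition~\ref{greedy}, and the derivation of $t_1\preceq r_1$ and $r_h\preceq t_h$ from Lemma~\ref{espomariancona24} applied to the extremal $2$-subpaths of $\Gamma$ against their flips are all sound (the paper instead cites \cite[Corollary~2.3]{BI} for the last point). Your uniqueness argument is also essentially workable, but the single flip pair $\big(x_{j-1}\to x_j\to x'_{j+1}\big)$ versus $\big(x_{j-1}\to x'_j\to x'_{j+1}\big)$ does not by itself force a contradiction when $x_{j+1}\neq x'_{j+1}$: writing $a,c$ for the labels of $x_{j-1}\to x_j$ and $x_{j-1}\to x'_j$, it only yields $c\preceq a$ (plus inequalities not involving the edges of $\Gamma$ after position $j$). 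You must also apply the lemma to the second pair $\big(x_{j-1}\to x_j\to x_{j+1}\big)$ versus $\big(x_{j-1}\to x'_j\to x_{j+1}\big)$, whose ``moreover'' clause gives $a\preceq c$; then $a=c$ forces $x_j=x'_j$, the desired contradiction. With that fix, your local argument at the first point of divergence genuinely replaces the paper's two-stage route (first showing two increasing paths must have disjoint interiors via truncations, then flipping at position $1$ and inducting).

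The genuine gap is in the claim that the opposite path $\Delta$ is decreasing. Running the argument for the reversed reflection ordering shows that $F$ has a \emph{unique} $\preceq$-decreasing path, but it does not identify that path with $\Delta$: your justification --- that the $\preceq^{\mathrm{rev}}$-increasing path ``must be $\Delta$ since $\Delta$ is the unique path obtained from $\Gamma$ by swapping every middle type'' --- is precisely the assertion to be proved and does not follow from uniqueness; a priori the unique decreasing path could share interior vertices with $\Gamma$. The paper closes this by a separate induction on $h$: the truncation $\Delta_{\geq 1}$ is opposite to $f_1(\Gamma)_{\geq 1}$, which is increasing of length $h-1$ by Lemma~\ref{espomariancona24}, so $\Delta_{\geq 1}$ is decreasing by the induction hypothesis; symmetrically $\Delta_{\leq h-1}$ is decreasing, and the two statements together give that $\Delta$ is decreasing. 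Some such argument is needed; the symmetry observation alone does not suffice.
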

\begin{proof}
By the definition of a dihedral flipclass, we may suppose that $F$ is a flipclass in a dihedral Coxeter group.

By Proposition~\ref{greedy}, every flipclass $F$ from $u$ to $v$ has at least one increasing path. Let us show that there cannot be more than one increasing path in $F$. We proceed by induction on $h$. For $h = 2$, Lemma~\ref{espomariancona24} settles the question. Suppose now $h > 2$. Let $\Gamma= \big(u \stackrel{t_1}{\longrightarrow} x_1 \stackrel{t_2}{\longrightarrow}  \cdots \stackrel{t_{h-1}}{\longrightarrow} x_{h-1}\stackrel{t_h}{\longrightarrow} v\big)$ be an increasing path in $F$. Consider a different path $\Gamma'= \big(u \stackrel{t'_1}{\longrightarrow} x'_1 \stackrel{t'_2}{\longrightarrow}  \cdots \stackrel{t'_{h-1}}{\longrightarrow} x'_{h-1}\stackrel{t'_h}{\longrightarrow} v\big)$ in $F$, and assume it is increasing as well. 

Suppose that  $\Gamma$ and $\Gamma'$ have a vertex in common differing from $u$ and $v$, i.e., $x_i = x'_i$ for some $i\in [1,h-1]$. The paths $\Gamma_{\leq i} = \big(u {\longrightarrow} x_1  {\longrightarrow} \cdots {\longrightarrow} x_i\big)$ and $\Gamma'_{\leq i} = \big(u {\longrightarrow} x'_1  {\longrightarrow}  \cdots {\longrightarrow} x'_i\big)$ are in the same flipclass and have length $i < h$. Thus, by induction one has
$\Gamma_{\leq i} = \Gamma'_{\leq i}$. Analogously, one argues with the truncations 
$\Gamma_{\geq i} = \big(x_i {\longrightarrow}  \cdots {\longrightarrow} v\big)$ and 
$\Gamma'_{\geq i} = \big(x'_i {\longrightarrow}  \cdots {\longrightarrow} v\big)$ to conclude $\Gamma_{\geq i} = \Gamma'_{\geq i}$; thus ultimately $\Gamma = \Gamma'$, which is a contradiction.

This shows that $\Gamma$ and $\Gamma'$  cannot have common vertex other than start and end, i.e., $\Gamma$ and $\Gamma'$ are mutually opposite.
Since $\ell(x_i) = \ell(x'_i)$ for all $i\in [1,h-1]$, flipping 
$\Gamma$ at $i=1$, one gets the path 
$f_1(\Gamma) = \big(u \stackrel{t'_1}{\longrightarrow} x'_1 \stackrel{r_2}{\longrightarrow} x_2 \stackrel{t_3}{\longrightarrow} \cdots \stackrel{t_{h-1}}{\longrightarrow} x_{h-1}\stackrel{t_h}{\longrightarrow} v\big)$. By Lemma~\ref{espomariancona24}, it follows that $f_1(\Gamma)_{\geq 1} = (x'_1 \stackrel{r_2}{\longrightarrow} x_2 \stackrel{t_3}{\longrightarrow}\cdots \stackrel{t_{h-1}}{\longrightarrow} x_{h-1}\stackrel{t_h}{\longrightarrow} v\big)$ is increasing. Furthermore, it is in the flipclass of 
$\Gamma'_{\geq 1}$, which is also increasing. This contradicts the induction hypothesis and proves that there is exactly one increasing path in $F$.

We conclude the proof by showing that the opposite path $\Gamma'$ of the increasing path $\Gamma$ is in fact decreasing. We argue by induction on $h$. For $h=2$, it follows from Lemma~\ref{espomariancona24}. Suppose now $h > 2$. Since $\Gamma'_{\geq 1}$ is opposite to the increasing path $f_1(\Gamma)_{\geq 1}$ of length $h-1$, by induction one concludes that $\Gamma'_{\geq 1}$ is decreasing. Analogously, $\Gamma'_{\leq h-1}$ is opposite to the increasing path $f_{h-1}(\Gamma)_{\leq h-1}$ of length $h-1$, hence by induction one concludes that also $\Gamma'_{\leq h-1}$ is decreasing. These two statements are equivalent to $\Gamma'$ being decreasing. The last assertion about the reflections follows by  \cite[Corollary~2.3]{BI}. 
\end{proof}
\color{black}

\begin{rmk}
Proposition~\ref{oradipranzo} generalizes  Lemma~\ref{espomariancona24}.
\end{rmk}
We now analyze the set of flipclasses in a  dihedral interval.
Recall that a composition of a positive integer $n$ is a sequence $\underline{\alpha} = (\alpha_1, \ldots, \alpha_r)$ of positive integers such that $\alpha_1 + \ldots + \alpha_r = n$. The integer $r$ is the length of the composition $\underline{\alpha}$. We say that a composition 
$\underline{\alpha} = (\alpha_1, \ldots, \alpha_r)$ is odd if all the $\alpha_i$ are odd.

\begin{pro}
\label{intervallidiedrali}
Let $W$ be an arbitrary Coxeter group. Let $[u,v]$ be a dihedral interval in $W$, $d = \ell(v)-\ell(u)$, and $h \in  [d]$. 
\begin{enumerate}
\item 
\label{uno}
$P_h (u,v) \neq \emptyset \iff h \equiv d  \pmod 2$.
\item 
\label{due}
The $h$-flipclasses from $u$ to $v$ are parametrized by odd compositions of $d$ of length $h$. 
\item 
\label{tre}
When $h \equiv d  \pmod 2$, the number of $h$-flipclasses  from $u$ to $v$ is  $\binom{\frac{d+h}{2}-1}{h-1}$.
\end{enumerate}
\end{pro}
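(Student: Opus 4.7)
The plan is to reduce to the case of a dihedral Coxeter group and then parametrize paths by their sequences of length increments, modding out by flips. Since $[u,v]$ is dihedral, there is by definition a poset isomorphism $[u,v]\cong[u',v']$ with $[u',v']$ an interval inside some dihedral Coxeter group $D$; by \cite[Proposition~3.3]{DyeComp1} this extends to an isomorphism of Bruhat graphs $B([u,v])\cong B([u',v'])$, and since the flip structure depends only on the unlabelled Bruhat graph (the remark following Definition~\ref{flipflip}), this identification sends $P_h(u,v)$ to $P_h(u',v')$ compatibly with flips. So one may assume $W=D$ is itself dihedral, and use the structural facts recalled at the beginning of this section: $B(D)$ has an edge $x\to y$ exactly when $\ell(y)-\ell(x)$ is a positive odd integer, and every length $l$ strictly between $0$ and $m=\ell(w_0)$ (with $m=\infty$ in the infinite case) is attained by exactly two elements of $D$, exchanged by any flip.

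With this reduction, an $h$-path from $u$ to $v$ is a sequence $u=x_0\to\cdots\to x_h=v$ whose length-increment vector $\pi(\Gamma)=(\ell(x_1)-\ell(x_0),\ldots,\ell(x_h)-\ell(x_{h-1}))$ is a composition of $d$ into $h$ positive odd parts. Part~(1) follows at once, since such a composition exists if and only if $h\equiv d\pmod 2$. For part~(2), the claim is that $\pi$ descends to a bijection between $h$-flipclasses from $u$ to $v$ and odd compositions of $d$ of length $h$. The $i$-th flip swaps $x_i$ with the unique other element of length $\ell(x_i)$ and leaves all other vertices untouched, so $\pi$ is constant on each flipclass. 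Conversely, two paths sharing the same $\pi$ differ only in their interior choices $x_i$ for $1\le i\le h-1$; at each such index the bounds $\ell(u)<\ell(x_i)<\ell(v)$ together with $\ell(u)\ge 0$ and $\ell(v)\le m$ force $0<\ell(x_i)<m$, so the $i$-th flip is always available and toggles $x_i$ independently of the choices at other positions. Applying the required flips transforms one path into the other and places them in a common flipclass.

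Part~(3) will then be a routine count: writing each odd part as $a_i=2b_i+1$ with $b_i\ge 0$ turns $\sum_{i=1}^h a_i=d$ into $\sum_i b_i=(d-h)/2$, which has $\binom{(d-h)/2+h-1}{h-1}=\binom{(d+h)/2-1}{h-1}$ nonnegative integer solutions. The only step with real content is the flip-connectivity direction in part~(2); the key observation is that in a dihedral setting an $i$-th flip is a purely local move affecting only the single vertex $x_i$, so that flips at distinct interior positions commute and can be combined freely to reach any path with the same length-increment composition.
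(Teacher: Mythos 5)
Your proof is correct and follows essentially the same route as the paper's: both parametrize the $h$-flipclasses by odd compositions of $d$ of length $h$ and count these via the substitution $a_i\mapsto (a_i+1)/2$ (equivalently $a_i=2b_i+1$). The only differences are that you make the reduction to an honest dihedral group explicit and spell out the flip-connectivity of each fiber, and that you record the composition via the length increments $\ell(x_i)-\ell(x_{i-1})$ rather than the reflection lengths $\ell(t_i)$ used in the paper --- which is in fact the safer choice, since for an interval not starting at the identity the reflection lengths of the edge labels need not sum to $d$.
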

\begin{proof}
Let $\Gamma = \big( u \stackrel{t_1}{\longrightarrow} x_1 {\longrightarrow} \ \ldots {\longrightarrow} \ x_{h-1}\stackrel{t_h}{\longrightarrow}  v \big)$ be a path in $P_h(u,v)$. Let 
$\underline{\ell}(\Gamma) = (\ell(t_1), \ldots, \ell(t_h))$. Then $\underline{\ell}(\Gamma) $ is an odd composition of $d$ of length $h$. By the description of flips in dihedral intervals, the composition 
$\underline{\ell}(\Gamma)$ is invariant by flips and  all paths in $P_h(u,v)$ associated with the same odd composition are in the same flipclass.  
Conversely, given an odd composition $\underline{\gamma}$ of $d$ of length $h$, there exists a path $\Gamma$ with $\underline{\ell}(\Gamma)= \underline{\gamma}$, thanks to the structure of dihedral intervals. Clearly, odd compositions of $d$ of length $h$ exists if and only if $h\equiv d \pmod 2$.

In order to count odd compositions of $d$ of length $h$, one observes that 
\[ (\alpha_1, \ldots, \alpha_h) 
\mapsto 
 (\frac{\alpha_1 + 1}{2}, \ldots, \frac{\alpha_h + 1}{2})
\]
establishes a bijection between odd compositions of $d$ of length $h$ and compositions of $\frac{d+h}{2}$ of length $h$, which are  counted by the binomial coefficient  $\binom{\frac{d+h}{2} - 1}{h-1}$. 
\end{proof}

\begin{rmk}
Proposition \ref{intervallidiedrali}, together with Proposition~\ref{oradipranzo} and Theorem~\ref{Dyertilde}, implies that, if $[u,v]$ is a dihedral interval, then $\widetilde{R}_{u,v}(q)=  \sum_{h} \binom{\frac{d+h}{2}-1}{h-1} q^{h}$, where the sum is over all $h\in[0,d]$ satisfying $h \equiv d  \pmod 2$.
\end{rmk}

\section{number of increasing paths in a flipclass of a finite Coxeter group}

The following result generalizes \cite[Theorem~4.1]{EM} to any finite Coxeter group and gives a direct argument to prove  that an interval has the same number of increasing paths w.r.t. any reflection ordering. It is actually stronger, establishing the result at the level of flipclasses.

\begin{thm}
\label{prec=prec}
Let $W$ be any finite Coxeter group.
A flipclass $F$ of $W$ has the same number of increasing paths w.r.t. any reflection ordering.
\end{thm}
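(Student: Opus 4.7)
The plan is to reduce to elementary ``braid moves'' on reflection orderings and then invoke the dihedral case established in Proposition~\ref{oradipranzo}.

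In a finite Coxeter group $W$, reflection orderings are in bijection with reduced expressions of the longest element $w_0$, and Matsumoto's theorem implies that any two such reduced expressions are linked by a sequence of braid moves. Translated to the reflection-ordering side, every elementary move $\preceq \rightsquigarrow \preceq'$ reverses a contiguous block $t_a \prec t_{a+1} \prec \cdots \prec t_b$ whose underlying set $\{t_a, \ldots, t_b\}$ coincides with the set of reflections of the rank-$2$ dihedral reflection subgroup $D$ that they generate, while leaving all other reflections in place. It therefore suffices, for one such move, to produce a bijection between the increasing paths of $F$ with respect to $\preceq$ and those with respect to $\preceq'$.

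Given $\Gamma\in F$ increasing w.r.t.\ $\preceq$, I would consider the (possibly empty) maximal contiguous sub-subpath $\Gamma|_D$ of $\Gamma$ whose edge labels lie in the block $\{t_a,\ldots,t_b\}\subseteq D$: it is contiguous precisely because every other label of $\Gamma$ is either below $t_a$ or above $t_b$ in $\preceq$. Via Theorem~\ref{DyeComp1.thm1.4}, applied to $D$ and the coset of $D$ containing $\Gamma|_D$, the sub-subpath $\Gamma|_D$ corresponds to a path in $B(D)$ lying in a dihedral flipclass in the sense of Definition~\ref{defd}. Any flip of $\Gamma$ whose pivot position lies inside $\Gamma|_D$ is simultaneously a flip in this dihedral subflipclass, so iterating the flips that exchange $\Gamma|_D$ with its opposite dihedral path $\Gamma|_D^{\mathrm{opp}}$ produces a path $\Gamma'\in F$.

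I would then check that $\Gamma \mapsto \Gamma'$ is the required bijection. Outside the block, the label sequences of $\Gamma$ and $\Gamma'$ coincide and $\preceq$ agrees with $\preceq'$, so these parts remain increasing. Inside the block, the restrictions $\preceq|_D$ and $\preceq'|_D$ are the two opposite reflection orderings on $D$, and Proposition~\ref{oradipranzo} tells us that $\Gamma|_D$ is the unique increasing path for $\preceq|_D$ in its dihedral subflipclass while $\Gamma|_D^{\mathrm{opp}}$ is the unique increasing path for $\preceq'|_D$. Hence $\Gamma'$ is increasing w.r.t.\ $\preceq'$, and the symmetric construction recovers $\Gamma$ from $\Gamma'$. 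The main hurdle is the initial step: justifying that in an arbitrary finite Coxeter group the elementary moves on reflection orderings are precisely reversals of such dihedral blocks; once this is granted, the remainder of the argument is careful bookkeeping combining Theorem~\ref{DyeComp1.thm1.4} with Proposition~\ref{oradipranzo}.
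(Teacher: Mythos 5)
Your proposal is correct and follows essentially the same route as the paper: reduce to a single braid move on the reduced word of $w_0$ via the Dyer correspondence, observe that such a move reverses a contiguous block of reflections forming a conjugate $\sigma T_{\{s,r\}}\sigma^{-1}$ of a parabolic dihedral subgroup, and define the bijection by replacing the (necessarily contiguous) dihedral subpath of an increasing path with its opposite, invoking Proposition~\ref{oradipranzo}. The step you flag as the main hurdle is handled in the paper exactly as you anticipate, via the explicit formula $t_i = s_1\cdots s_i\cdots s_1$ and a direct computation of the effect of the braid move.
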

\begin{proof}
Recall that there is a natural one-to-one correspondence between reduced expressions of the longest element $w_0$ and reflection orderings of $W$  (\cite{Dyeth}, Remark 6.16,(iv)). A reduced expression  $s_1 \cdots s_{\ell(w_0)}$  of $w_0$ corresponds to the reflection ordering $t_1 \preceq t_2 \preceq \cdots  \preceq t_{\ell(w_0)}$, where $t_i = s_1 \cdots s_i  \cdots s_1$, for all $i$ in $[\ell(w_0)]$.
Recall also (see \cite{Tit}) that two reduced expressions of the same element may be joined  by a finite sequence of braid moves, where by a  braid move we mean
the replacement of a consecutive subsequence $ss's \cdots$ of $m(s,s')$ letters by the sequence $s'ss' \cdots$ (again of $m(s,s')$ letters), where $m(s,s')$ is the order of the product $ss'$. 

Let $\preceq$ and $\preceq'$ be two reflection orderings. We need to show that the number of paths in $F$ that are increasing w.r.t. $\preceq$ equals the number of paths in $F$ that are increasing w.r.t. $\preceq'$.  Let $s_1 \cdots s_n$ be the reduced expression of $w_0$ associated with $\preceq$.  We may suppose, by transitivity, that the reduced expressions associated with  $\preceq'$ is obtained from  $s_1 \cdots s_n$ by performing a single braid move. Let $k$ be such that the  reduced expression associated with $\preceq'$ is obtained by the braid move that involves $s_k \cdots s_{k+m-1}$. For short, let $s = s_k$ and $r=s_{k+1}$ so that $m$ is the order of the product $sr$. Denote by $t_i$ (respectively, $t_i'$) the $i$-th reflection in the order $\preceq$ (respectively, $\preceq'$).  Then $t_i = t_i'$ for $i\notin \{k, k+1, \ldots, k+m-1\}$, while $t'_k = t_{k+m-1}, t'_{k+1} = t_{k+m-2},  \ldots, t'_{k+m-1}=t_k$. 
Note that $\{t_k, \ldots, t_{k+m-1}\} =\{t'_k, \ldots, t'_{k+m-1}\}$ is the set $\sigma T_{\{s,r\}} \sigma^{-1}$ of the conjugates of the reflections of the parabolic dihedral subgroup $W_{\{s,r\}}$ through the element $\sigma =s_1 \cdots s_{k-1}$.

We construct a bijection $\phi$ from the set $I_{\preceq}$ of increasing paths of $F$ w.r.t. $\preceq$ to the  set $I_{\preceq'}$ of increasing paths of $F$ w.r.t. $\preceq'$.

Let $\Gamma \in I_{\preceq}$. If $\Gamma \in I_{\preceq'}$, then set $\phi(\Gamma) = \Gamma$. 
 If $\Gamma \notin I_{\preceq'}$, then $\Gamma$ has a certain number $j$ of consecutive edges with label in $\sigma T_{\{s,r\}} \sigma^{-1}$ with $j\in [2,m]$, and no other edge with label  in $\sigma T_{\{s,r\}} \sigma^{-1}$. This $j$-path is in a dihedral interval: we obtain  $\phi(\Gamma)$ by replacing it with its opposite path in its flipclass. By Proposition~\ref{oradipranzo},   the path $\phi(\Gamma)$ is increasing w.r.t. $\preceq'$.
The map $\phi$ is an involution, hence a bijection.
\end{proof}

Theorem~\ref{prec=prec} allows the following definition.
\begin{defn}
\label{c==c}
Let $F$ be a flipclass in a finite Coxeter group. We define $c(F)$ to be the number of increasing paths in $F$ (for any choice of reflection ordering).
\end{defn}
The previous theorem and definition are extended to flipclasses of finite type in Proposition~\ref{prec=precfinitetype} and Definition~\ref{defcfinitetype}.

\section{Finiteness results for flipclasses of finite type}
\label{finitensess}
The goal of this section is to prove that, for each $h$, the study of $h$-flipclasses of finite type (see Definition~\ref{deffin}) reduces to a finite number of cases. 

\bigskip
We need the general result given in Proposition~\ref{rango<h}, to prove which we use the following linear algebra lemma.

\begin{lem}
\label{algebretta}
Let $V$ be a Euclidean vector space with scalar product $(\cdot ,\cdot)$,  and $\beta_1,\ldots,\beta_r \in V$. Suppose that
\begin{enumerate}
\item $(\beta_i,\beta_j ) \leq 0$, for all $i,j\in [r]$ with $i\neq j$;
\label{<0}

\item there exists a functional $f\in V^{\ast}$ such that $f(\beta_i)>0$ for each $i \in [r]$.
\label{funzionale}
\end{enumerate}
Then  $\beta_1,\ldots,\beta_r$ are linearly independent.
\end{lem}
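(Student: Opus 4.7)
My plan is to argue by contradiction using the classical positive/negative splitting trick that appears in the theory of root systems. Suppose that $\beta_1,\ldots,\beta_r$ are linearly dependent, so there exist scalars $c_1,\ldots,c_r$, not all zero, with $\sum_{i=1}^r c_i \beta_i = 0$. Partition the set of indices with $c_i \neq 0$ into $I = \{i : c_i > 0\}$ and $J = \{j : c_j < 0\}$, and rewrite the dependence as
$$v \;:=\; \sum_{i \in I} c_i \beta_i \;=\; \sum_{j \in J} (-c_j)\, \beta_j.$$
Note that because the $c_i$'s are not all zero, at least one of $I$, $J$ is nonempty.

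The core of the argument will be to compute $(v,v)$ by pairing the two different expressions for $v$ against each other, which gives
$$(v,v) \;=\; \sum_{i\in I,\, j\in J} c_i(-c_j)(\beta_i,\beta_j).$$
In this double sum the indices $i$ and $j$ are always distinct, all coefficients $c_i$ and $-c_j$ are positive, and by hypothesis (\ref{<0}) each inner product $(\beta_i,\beta_j)$ is $\leq 0$. Hence $(v,v) \leq 0$, and positive-definiteness of the scalar product forces $v=0$. (If one of $I$, $J$ is empty, the double sum is empty and the same conclusion $v=0$ holds trivially from the single one-sided expression, after the next step.)

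To close the argument, I will apply the functional $f$ of hypothesis (\ref{funzionale}). If $I \neq \emptyset$, then
$$f(v) \;=\; \sum_{i\in I} c_i\, f(\beta_i) \;>\; 0,$$
which contradicts $v=0$; symmetrically, if $J \neq \emptyset$, applying $f$ to the second expression for $v$ again gives $f(v) > 0$, a contradiction. Since at least one of $I$, $J$ is nonempty, we are done.

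I do not expect any genuine obstacle: the proof is the standard two-line argument, and the only thing to be careful about is keeping track of the edge case where all nonzero coefficients have the same sign, which is handled uniformly by applying $f$ directly to the relation.
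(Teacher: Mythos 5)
Your proof is correct and follows essentially the same route as the paper: split the dependence relation into its positive and negative parts, pair the two expressions against each other to get $(v,v)\le 0$, and use the functional $f$ to rule out the degenerate cases. The only cosmetic difference is that the paper first uses minimality of $r$ to assume all $c_i\neq 0$ and uses $f$ to show both sides of the split are nonempty before invoking positive definiteness, whereas you restrict to the nonzero coefficients directly and apply $f$ at the end; the two orderings are logically equivalent.
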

\begin{proof}
Toward a contradiction, let $r$ be minimal for which the assertion does not hold. Let $\sum_{i=1}^{r}c_i\beta_i=0$ with $c_i\neq 0$ for at least one $i$ in $[r]$. By minimality, $c_i\neq 0$ for each $i$ in $[r]$. By (\ref{funzionale}), the $c_i$ cannot all have same sign. Hence, there exist $x\in V\setminus{0}$ and a partition $[r]= I \cup J$ such that $x=\sum_{i\in I}c_i\beta_i =   \sum_{j\in J}c'_j\beta_j$ where all coefficients of the two sums are positive ($c'_j=-c_j$, for $j\in J$).
Then $$(x,x)= \big(\sum_{i\in I}c_i\beta_i ,   \sum_{j\in J}c'_j\beta_j \big)= \sum_{(i,j)\in I\times J}c_ic'_j(\beta_i,\beta_j)\leq 0,$$
where the inequality follows by (\ref{<0}). This contradicts the positive definiteness.
\end{proof}
Recall that, for a reflection subgroup $W'$, the set  of its Coxeter generators  is denoted by $\chi(W')$.
\begin{pro}
\label{rango<h}
Let $W$ be a  finite Coxeter group. Let $W'$ be a reflection subgroup  generated by the set of reflections $T'$. Let $h$ be the dimension of the subspace generated by $\{\alpha_t:t\in T'\}$. Then the rank of $W'$ as a Coxeter group does not exceed $h$.
\end{pro}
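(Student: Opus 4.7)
The plan is to observe that the rank of $W'$ as a Coxeter group equals $|\chi(W')|$, and to establish the bound $|\chi(W')|\leq h$ by showing that $\Pi' := \{\alpha_t : t\in \chi(W')\}$ is a linearly independent subset of the $h$-dimensional space $\operatorname{span}\{\alpha_t : t\in T'\}$. The inclusion $\Pi'\subseteq \operatorname{span}\{\alpha_t : t\in T'\}$ follows at once from the fact that $W'$ stabilises $\operatorname{span}\{\alpha_t : t\in T'\}$ and that each element of $\chi(W')\subseteq W'\cap T$ is a $W'$-conjugate of some element of $T'$, so its root is a linear combination of the $\alpha_t$ with $t\in T'$.

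To prove linear independence of $\Pi'$, I apply Lemma~\ref{algebretta} in the ambient Euclidean space of the standard geometric representation of $W$. The first hypothesis, that $(\alpha,\beta)\leq 0$ for distinct $\alpha,\beta\in \Pi'$, is the key content and is where I rely on Dyer's structural results recalled in the preliminaries: since $(W',\chi(W'))$ is a Coxeter system, $\Pi'$ plays the role of its simple system inside the sub-root-system $W'\cdot\{\alpha_t : t\in T'\}\cap \Phi^+$ of $\Phi^+$, and pairwise non-positive inner products are part of this structure. The second hypothesis is elementary given that $W$ is finite: the interior of the fundamental chamber of $W$ is nonempty, so I pick a vector $v$ in it and let $f = (v,\cdot)$; then $f$ is strictly positive on every positive root of $W$, in particular on every element of $\Pi'\subseteq \Phi^+$.

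The conclusion is immediate: Lemma~\ref{algebretta} gives that $\Pi'$ is linearly independent, so $|\chi(W')|=|\Pi'|\leq \dim\operatorname{span}(\Pi')\leq h$. I expect the main obstacle to be the verification of the non-positive pairing condition, since although it looks plausible it genuinely requires Dyer's theorem identifying $\Pi'$ as a simple system of a sub-root-system of $\Phi$, rather than merely as a generating set of positive roots of $W'$; without this structural input the inner products between arbitrary pairs of positive roots in $W'\cap T$ can of course be positive.
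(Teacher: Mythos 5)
Your proposal is correct and follows essentially the same route as the paper: both reduce the claim to the linear independence of $\{\alpha_t : t\in\chi(W')\}$ via Lemma~\ref{algebretta}, using Dyer's result that roots of distinct canonical generators pair non-positively together with a functional positive on all positive roots (a vector in the interior of the fundamental chamber). The only difference is that you explicitly justify the containment $\{\alpha_t : t\in\chi(W')\}\subseteq\operatorname{span}\{\alpha_t : t\in T'\}$, a step the paper's proof leaves implicit.
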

\begin{proof}
Consider the standard geometric representation of $W$ and denote its bilinear form by  $(\cdot ,\cdot)$. Since $W$ is finite, the bilinear form is  positive definite. 
Let $\chi(W')=\{t_1,\ldots,t_r\}$. 
By \cite[Theorem~4.4]{DyeJAlg}, we have $(\alpha_{t_i},\alpha_{t_j})\leq 0$ for all $i,j\in[r]$, $i\neq j$. Since $\alpha_{t_1},\ldots,\alpha_{t_r}$ are positive roots, they satisfy  the  hypotheses of Lemma~\ref{algebretta}. Hence they are independent. Thus, $|\chi(W')|\leq h$.
\end{proof}

\begin{rmk}
When $W$ is an infinite Coxeter group, Proposition~\ref{rango<h} does not hold. Indeed, a curious phenomenon happens, as shown by  Dyer, Hohlweg and Ripoll in \cite{DHR}. Given an  indefinite Coxeter group $W$ (not finite, nor affine) of rank at least 3 and any $m\in \mathbb N$, there exists a reflection subgroup $W'$ of rank at least $m$ that is a universal Coxeter group  (see  Theorem~6.2 of \cite{DHR}).  
\end{rmk}

\bigskip

Recall that, given a flipclass $F$ of an arbitrary Coxeter group, we denote by $W(F)$ the reflection subgroup generated by $T(F)$.

\begin{lem}
\label{domanda}
Let $F$ be an $h$-flipclass in a finite  Coxeter group $W$. Then the rank of $W(F)$ as a Coxeter group does not exceed $h$.
\end{lem}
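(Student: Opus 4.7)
The plan is to combine the two preparatory results just established: Lemma~\ref{sempreuguali}, which bounds the dimension of $\operatorname{span}\Phi(F)$ in terms of $h$, and Proposition~\ref{rango<h}, which bounds the Coxeter rank of a reflection subgroup of a finite Coxeter group by the dimension of the span of the roots of its generating reflections. The statement to be proved then follows as an immediate corollary of chaining these two inequalities.

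More concretely, I would first recall that, by definition, $W(F)$ is the reflection subgroup of $W$ generated by the set $T(F)$, and that $\Phi(F) = \{\alpha_t : t \in T(F)\}$. By Lemma~\ref{sempreuguali}, we have
\[
\dim\bigl(\operatorname{span}\Phi(F)\bigr) \leq h,
\]
since the span of the roots labeling the edges of any single path $\Gamma \in F$ already coincides with $\operatorname{span}\Phi(F)$, and this span lives in an at most $h$-dimensional subspace (there are only $h$ edges in $\Gamma$).

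Next, I would apply Proposition~\ref{rango<h} to the reflection subgroup $W' = W(F)$ with the set of generating reflections $T' = T(F)$: the hypothesis that $W$ is finite is exactly the one given in the present lemma, and the ``$h$'' of Proposition~\ref{rango<h} is precisely $\dim\bigl(\operatorname{span}\Phi(F)\bigr)$. The proposition then yields that the rank of $W(F)$ as a Coxeter group does not exceed $\dim\bigl(\operatorname{span}\Phi(F)\bigr)$. Combining with the previous inequality gives the rank bound $|\chi(W(F))| \leq h$, as desired.

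There is essentially no obstacle here: both main ingredients are in place, and the lemma is a straightforward corollary. The only point worth double-checking is the compatibility of notation, namely that the ``$h$'' appearing in Proposition~\ref{rango<h} is the dimension of the root span and not the length of paths; but Lemma~\ref{sempreuguali} is precisely the bridge between the two meanings, which is why it is invoked first.
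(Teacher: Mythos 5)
Your proposal is correct and is exactly the paper's argument: the paper proves this lemma by the one-line combination ``By Lemma~\ref{phirango<h} and Proposition~\ref{rango<h}'', which is precisely the chaining of $\dim(\operatorname{span}\Phi(F))\leq h$ with the rank bound for reflection subgroups that you spell out. Your additional remarks on notation compatibility are accurate but add nothing beyond what the paper's cited results already guarantee.
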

\begin{proof}
By Lemma \ref{phirango<h} and Proposition \ref{rango<h}.
\end{proof}

\begin{rmk}
Let $F$ be a flipclass, and  $\Phi(F)=\{\alpha_t: t\in T(F)\}$. In general,  $\Phi \cap \operatorname{span}\Phi(F)$ might strictly contain the root subsystem generated by $\Phi(F)$. For example, let $(W,S)$ be of type $B_2$, with $S=\{s,r\}$, and $F$ be the $2$-flipclass 
$\{\big( r\stackrel{s}{\longrightarrow}  sr \stackrel{rsr}{\longrightarrow}  srs \big), \big( r \stackrel{ rsr}{\longrightarrow} rs \stackrel{s}{\longrightarrow}  srs \big)\}$. Then  $\Phi \cap \operatorname{span}\Phi(F)=\Phi$, while  the root subsystem generated by $\Phi(F)$ is a subsystem of type $A_1\times A_1$ (and  $W(F)=\{e,s,rsr,srsr\}$).
\end{rmk}

\bigskip
In the following definition, we introduce three families of flipclasses that are the object of study in what follows.
\begin{defn}
\label{deffin}
We say that a flipclass $F$ of an arbitrary Coxeter group is of
\begin{itemize}
\item \emph{finite type},  if $W(F)$ is a finite Coxeter group;
\item \emph{Weyl type}, if $W(F)$ is  a Weyl group;
\item  \emph{symmetric type}, if $W(F)$ is a product of Weyl  groups of type $A$.
\end{itemize}
\end{defn}

Clearly, a symmetric type flipclass is of Weyl type, and a Weyl type flipclass is of finite type.

\bigskip
The following result and definition extend  Theorem~\ref{prec=prec} and Definition~\ref{c==c} to flipclasses of finite type.
\begin{pro}
\label{prec=precfinitetype}
Let $W$ be an arbitrary Coxeter group.
A flipclass $F$ of $W$ of finite type has the same number of increasing paths w.r.t. any reflection ordering.
\end{pro}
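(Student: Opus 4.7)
The plan is to reduce the problem to the already established finite case (Theorem~\ref{prec=prec}) by transferring everything to the reflection subgroup $W(F)$, which is finite by the definition of finite type.

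First, I would observe that whether a path in $F$ is increasing depends only on the total order on the label set $T(F)$ inherited from the ambient reflection ordering. Given any reflection ordering $\preceq$ on $W$, its restriction to $W(F)\cap T$ is a reflection ordering on $W(F)$, as recalled after Theorem~\ref{DyeComp1.thm1.4}. Conversely, not every reflection ordering on $W(F)$ need extend to one on $W$, but this asymmetry will not matter for the argument.

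Next, I would apply Lemma~\ref{FeF'} with $\overline{W}=W(F)$ (which indeed contains $T(F)$) to obtain a flipclass $F'$ of $W(F)$ together with an edge-labeling preserving isomorphism $\psi\colon F\to F'$. Since $\psi$ preserves edge labels and the labels of $F$ and $F'$ both lie in $T(F)$, a path $\Gamma\in F$ is increasing with respect to $\preceq$ if and only if $\psi(\Gamma)\in F'$ is increasing with respect to the restriction $\preceq|_{W(F)\cap T}$. Hence
\[
\#\{\Gamma\in F:\Gamma\text{ increasing w.r.t.\ }\preceq\}
=\#\{\Delta\in F':\Delta\text{ increasing w.r.t.\ }\preceq|_{W(F)\cap T}\}.
\]

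Finally, because $F$ is of finite type the group $W(F)$ is finite, so Theorem~\ref{prec=prec} applies to $F'$: the right-hand side above is independent of the reflection ordering of $W(F)$ that one uses, and in particular takes the same value for the restrictions of any two reflection orderings $\preceq_1,\preceq_2$ of $W$. This yields the equality of the two corresponding counts in $F$, completing the proof.

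There is no real obstacle beyond checking these compatibilities; the only point that requires care is the remark that the label-preserving isomorphism provided by Lemma~\ref{FeF'} intertwines the notion of ``increasing with respect to $\preceq$'' with ``increasing with respect to $\preceq|_{W(F)\cap T}$'', which is immediate since increasingness is a condition on the sequence of edge labels alone.
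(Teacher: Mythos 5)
Your proposal is correct and follows essentially the same route as the paper: restrict the ambient reflection ordering to $W(F)$, transfer $F$ to a flipclass $F'$ of $W(F)$ via the edge-labeling preserving isomorphism of Lemma~\ref{FeF'}, and invoke Theorem~\ref{prec=prec} for the finite group $W(F)$. The only difference is presentational -- you spell out the label-matching compatibility directly, while the paper phrases the final step as a short contradiction argument.
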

\begin{proof}
Recall that, given a reflection subgroup $W'$ of $W$, the restriction of a reflection ordering on $W$ to an order on $W'\cap T$ is a reflection ordering on $W'$. 
By Lemma~\ref{FeF'}, the flipclass $F$ is isomorphic to a flipclass $F'$ of $W(F)$ through an edge-labeling preserving isomorphism.
If we had two reflection orderings of $W$ w.r.t. which $F$ has different number of increasing paths, we would have two reflection orderings of $W(F)$ w.r.t. which $F'$ has different number of increasing paths, which contradicts Theorem~\ref{prec=prec}.
\end{proof}

\begin{defn}
\label{defcfinitetype}
Let $F$ be a flipclass of finite type or a dihedral flipclass. We define $c(F)$ to be the number of increasing paths in $F$ (for any choice of reflection ordering).
\end{defn}

\begin{rmk}
  $ $

\begin{enumerate}
\item If  $F$ is a dihedral flipclass, then $c(F)=1$ by Proposition~\ref{oradipranzo}.
\item Let $F$ and $\bar{F}$ be two flipclasses of finite type. If $F \cong \bar{F}$ then $c(F)=c(\bar{F})$.
\item If $F$ is a flipclass of finite type and $F=F_1\ast F_2$, then also $F_1$ and $F_2$ are of finite type and $c(F)=c(F_1) \cdot c(F_2)$.
\end{enumerate}
\end{rmk}

\bigskip
We sum up the main results of this section in the following theorem.
\begin{thm}
  \label{thm:reduction to finite}
Let $F$ be an $h$-flipclass.
\begin{enumerate}
\item If $F$ is of finite type, then it is isomorphic to a flipclass $\bar{F}$ of a finite Coxeter group $\overline{W}$
, and $c(F)=c(\bar{F})$.
\item  If $F$ is of Weyl type, then it is isomorphic to a flipclass $\bar{F}$ of a Weyl group $\overline{W}$,
and $c(F)=c(\bar{F})$.
\item  If $F$ is of symmetric type, then it is isomorphic to a flipclass $\bar{F}$ of a Coxeter group $\overline{W}$ 
that is isomorphic to a product of Weyl groups of type $A$, and $c(F)=c(\bar{F})$.
\end{enumerate}
Moreover, in all these cases, $\overline{W}$ may be  taken to be of rank not greater than $h$, and,  if $F$ is irreducible, then $\overline{W}$ may be  taken irreducible. 
\end{thm}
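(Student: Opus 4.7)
The plan is to set $\overline{W}:=W(F)$ in all three cases and let the earlier lemmas do the work. By the very definition of the three types, this choice makes $\overline{W}$ respectively finite, a Weyl group, or a product of type-$A$ Weyl groups. Applying Lemma~\ref{FeF'} with this $\overline{W}$ (which contains $T(F)$ by construction) produces an edge-labeling preserving isomorphism from $F$ onto a flipclass $\bar{F}$ of $\overline{W}$. Since the labels are preserved verbatim and every reflection ordering of $W$ restricts to a reflection ordering of $\overline{W}$, I would then argue that this is in fact an isomorphism of flipclasses in the sense of Section~3, with $\tau$ the identity on reflections; the equality $c(F)=c(\bar{F})$ follows at once from Proposition~\ref{prec=precfinitetype}, since the induced bijection on paths sends increasing paths to increasing paths.

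For the rank bound, I would note that $\bar{F}$ is an $h$-flipclass inside the finite Coxeter group $\overline{W}$ and that, as $T(\bar{F})=T(F)$ generates $\overline{W}=W(F)$, one has $W(\bar{F})=\overline{W}$. Lemma~\ref{domanda} applied inside $\overline{W}$ then yields $\mathrm{rank}(\overline{W})\leq h$, uniformly in the three cases. For the irreducibility clause, I would invoke Lemma~\ref{irriso} to transfer irreducibility of $F$ to $\bar{F}$, and then argue by contradiction: a nontrivial Coxeter-group product decomposition $\overline{W}=W_1\times W_2$ would force the reflections of $\overline{W}$ to split as $(T\cap W_1)\sqcup(T\cap W_2)$, inducing a nontrivial commuting partition of $T(\bar{F})$ (nontrivial because $T(\bar{F})$ generates $\overline{W}$ and hence must meet both factors); Proposition~\ref{virgolaid} would then make $\bar{F}$ reducible, a contradiction.

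The main obstacle, I expect, is not conceptual but rather a careful verification that the edge-labeling preserving isomorphism of Lemma~\ref{FeF'} promotes to a full isomorphism of flipclasses---specifically, that flip preservation and compatibility with reflection orderings are automatic. This reduces to observing that the involution in Lemma~\ref{mari}, and hence the notion of flip, is determined purely by the directed subgraph structure of the Bruhat graph on $[u,v]$; Theorem~\ref{DyeComp1.thm1.4} guarantees that this directed structure is transported faithfully between $B(\overline{W})$ and the relevant coset, so flips on one side correspond to flips on the other. Once this bookkeeping is in place, every clause of the theorem becomes a one-line assembly of results already proved earlier in the paper.
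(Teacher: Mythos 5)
Your proposal is correct and follows essentially the same route as the paper: take $\overline{W}=W(F)$, invoke Lemma~\ref{FeF'} for the isomorphism and the equality $c(F)=c(\bar F)$, Lemma~\ref{domanda} for the rank bound, and the observation that a nontrivial product decomposition of $W(F)$ would yield a nontrivial commuting partition of $T(F)$, contradicting irreducibility. The paper states these steps more tersely, but the content is the same; your additional bookkeeping about flips being determined by the unlabelled directed graph structure is a correct elaboration of what the paper leaves implicit.
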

\begin{proof}
The proof is the same for the three cases. We prove the assertion with $W(F)$ as $\overline{W}$.

By Lemma~\ref{FeF'}, the flipclass $F$ is isomorphic to a flipclass $\bar{F}$ of $W(F)$ satisfying $c(F)=c(\bar{F})$. The rank of $W(F)$ does not exceed $h$, by Lemma~\ref{domanda}.
Furthermore, $W(F)$ is irreducible if $F$ is. 
\end{proof}

The following now readily follows.
\begin{cor}
\label{finiti}
Fix $h$ in $\mathbb N$. There are finitely many isomorphism classes of  $h$-flipclasses of finite type. 
\end{cor}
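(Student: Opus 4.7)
The plan is to deduce the corollary directly from Theorem~\ref{thm:reduction to finite} together with the classical classification of finite Coxeter groups.

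First, I would invoke Theorem~\ref{thm:reduction to finite}(1), which ensures that every $h$-flipclass $F$ of finite type is isomorphic to a flipclass $\bar{F}$ of some finite Coxeter group $\overline{W}$ whose rank does not exceed $h$. Thus, to count isomorphism classes of $h$-flipclasses of finite type, it suffices to count isomorphism classes of flipclasses $\bar{F}$ that live inside finite Coxeter groups of rank at most $h$.

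Next, I would appeal to the well-known classification of finite Coxeter groups: up to isomorphism there are only finitely many irreducible finite Coxeter groups of any given bounded rank, and hence only finitely many finite Coxeter groups of rank at most $h$ (since a reducible one is a product of irreducible factors, each of which is itself a finite Coxeter group of rank at most $h$). Fix representatives $\overline{W}_1,\ldots,\overline{W}_N$ of these finitely many isomorphism types.

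Now, for each such $\overline{W}_i$, the underlying set is finite, so there are only finitely many pairs $(u,v)\in \overline{W}_i\times\overline{W}_i$ with $u\leq v$, and consequently only finitely many paths in $B(\overline{W}_i)$ of length $h$. In particular, $\overline{W}_i$ contains only finitely many $h$-flipclasses, and hence only finitely many isomorphism classes of $h$-flipclasses can arise from $\overline{W}_i$. Summing over $i\in[N]$ yields the desired finiteness, and no serious obstacle is expected: all the work has already been carried out in Lemma~\ref{domanda} and Theorem~\ref{thm:reduction to finite}, so this corollary is essentially a bookkeeping consequence of those results combined with the classification theorem.
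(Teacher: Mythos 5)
Your overall strategy --- reduce via Theorem~\ref{thm:reduction to finite} to flipclasses of finite Coxeter groups of rank at most $h$ and then count --- is exactly what the paper has in mind (it offers no argument beyond ``readily follows''). However, there is a genuine gap in the step where you invoke the classification: it is \emph{not} true that there are only finitely many irreducible finite Coxeter groups of bounded rank. The dihedral groups $I_2(m)$, $m\geq 3$, form an infinite family of pairwise non-isomorphic irreducible finite Coxeter groups, all of rank $2$. Hence for $h\geq 2$ your finite list of representatives $\overline{W}_1,\ldots,\overline{W}_N$ does not exist, and the concluding count (finitely many groups, each finite, each containing finitely many $h$-flipclasses) collapses precisely for those flipclasses $F$ whose group $W(F)$ has dihedral irreducible factors $I_2(m)$ with $m$ unbounded. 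Such flipclasses of finite type certainly occur: every dihedral $h$-flipclass inside some finite $I_2(m)$ is of finite type, and the paper notes after Lemma~\ref{unasola} that these split into more than one isomorphism type.

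To repair the argument one must treat the dihedral factors separately. By Corollary~\ref{cor:unique factorization} it suffices to bound the number of isomorphism classes of \emph{irreducible} $h'$-flipclasses for $h'\leq h$; an irreducible flipclass of finite type lives in an irreducible $W(F)$ of rank at most $h$, and for rank different from $2$ your counting goes through, since the relevant groups do form a finite list. For the rank-$2$ case one uses the analysis of Section~4: a dihedral $h'$-flipclass has $2^{h'-1}$ paths, and its vertex set, edge set and label set have cardinality bounded in terms of $h'$ alone; its isomorphism class is determined by this bounded amount of combinatorial data together with the relative order of its finitely many labels in one of the two reflection orderings of the ambient dihedral group (cf.\ Proposition~\ref{intervallidiedrali} and Lemma~\ref{espomariancona24}). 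Since only finitely many such patterns exist, only finitely many isomorphism classes arise even though the ambient groups $I_2(m)$ range over an infinite family. With this supplement your proof is complete.
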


\begin{rmk}
Notice that $h$-flipclasses of finite Coxeter groups are infinite in number and come from intervals of any length.
\end{rmk}

\section{Flip Combinatorial invariance and Combinatorial invariance}
In this section, we extend the approach of \cite{EM} and we use the machinery developed in the previous sections to tackle the problem of the Combinatorial Invariance of the coefficients of the $\widetilde{R}$-polynomials.

Let $\mathcal F$ be a set of flipclasses (possibly in different groups).
We refer to the following predicate  as the \emph{Flip Combinatorial Invariance} for $\mathcal F$:
\begin{center}
for any two flipclasses $F,F'$ in $\mathcal F$, if $F\stackrel{\text{\tiny{cb}}}\cong F'$ then $c(F)=c(F')$.
\end{center}
\begin{rmk}
\label{sbadiglio}
$ $

\begin{enumerate}
\item
Flip Combinatorial Invariance does not hold for the set of 3-flipclasses of $H_3$ and $H_4$. The length 3 intervals that occur in a Weyl group are isomorphic to $2$-, $3$-, or $4$-crowns. Additionally, $H_3$ and $H_4$  contain also intervals isomorphic to $5$-crowns (see \cite{BB}, pg 52). The $5$-flipclasses of such intervals (which, as well as every $h$-flipclasses of a length $h$ interval, contain one increasing path each) are combinatorially isomorphic to some $5$-flipclasses (evidently of intervals of length greater than 3) containing two increasing paths and occuring both in Weyl groups and in $H_3$ and $H_4$. 
\item
\label{sb2}
 The fact that Flip Combinatorial Invariance be true for $\mathcal F_1$ and  $\mathcal F_2$ does not imply  that Flip Combinatorial Invariance be true for $\mathcal F_1 \cup \mathcal F_2$, unless one among $\mathcal F_1$ and $\mathcal F_2$ is closed under combinatorial isomorphism (such as, for example, the class of the dihedral flipclasses, see Lemma~\ref{nottefonda}(\ref{2})).
\end{enumerate}
\end{rmk}

Let $W$ be a Coxeter group, and $u,v \in W$. By Theorem~\ref{Dyertilde}, 
the coefficient $[q^h]\widetilde{R}_{u,v}(q)$ of $q^h$ in the $\widetilde{R}$-polynomial $\widetilde{R}_{u,v}(q)$ is the number of paths  from $u$ to $v$ of length $h$ that are increasing w.r.t. to any reflection ordering. Thus,
$$[q^h]\widetilde{R}_{u,v}(q)= \sum_F c(F),$$
where the sum is over  all $h$-flipclasses $F$ of paths from $u$ to $v$. Since two isomorphic intervals share the same multiset of combinatorial isomorphism classes of $h$-flipclasses, we have the following result.
\begin{thm}
\label{FCICimpliesCIC}
Flip Combinatorial Invariance for $\mathcal F$ implies the Combinatorial Invariance Conjecture for the  intervals whose flipclasses are in  $\mathcal F$. 
\end{thm}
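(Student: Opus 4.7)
The plan is to assemble results already established in the excerpt. Fix two intervals $[u_1,v_1]$ and $[u_2,v_2]$ (in possibly different Coxeter groups) which are isomorphic as posets and whose $h$-flipclasses all belong to $\mathcal F$. I want to show that $\widetilde{R}_{u_1,v_1}(q) = \widetilde{R}_{u_2,v_2}(q)$, which by Conjecture~\ref{comb-inv-con2} (via Dyer's theorem cited immediately before it, asserting that $B([u,v])$ as a directed graph is determined by $[u,v]$ as a poset) is equivalent to the Combinatorial Invariance Conjecture.

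First I would fix $h\in\mathbb N$ and appeal to the identity displayed just before the theorem, namely
\[
[q^h]\widetilde{R}_{u,v}(q)=\sum_F c(F),
\]
where $F$ ranges over the $h$-flipclasses from $u$ to $v$; this is a direct consequence of Theorem~\ref{Dyertilde} and Definition~\ref{defcfinitetype} (extended to any flipclass at which $c$ is well defined, as is the case under Flip Combinatorial Invariance on $\mathcal F$). Next, I would invoke Remark~\ref{costante}: since the notion of $i$-th flip depends only on the unlabelled Bruhat graph structure (via the canonical involution of Lemma~\ref{mari}), the multiset of combinatorial isomorphism classes of $h$-flipclasses of $[u,v]$ is a combinatorial invariant of the interval. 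In particular, any poset isomorphism $[u_1,v_1]\xrightarrow{\sim}[u_2,v_2]$ lifts to an isomorphism of Bruhat graphs, which in turn induces a bijection between the $h$-flipclasses of the two intervals under which corresponding flipclasses are combinatorially isomorphic.

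Then I would apply the Flip Combinatorial Invariance hypothesis on $\mathcal F$: because every flipclass involved lies in $\mathcal F$, each pair of combinatorially isomorphic flipclasses matched by the bijection satisfies $c(F)=c(F')$. Summing these equalities over all $h$-flipclasses of $[u_1,v_1]$ and using the displayed identity for both intervals yields
\[
[q^h]\widetilde{R}_{u_1,v_1}(q)=\sum_F c(F)=\sum_{F'}c(F')=[q^h]\widetilde{R}_{u_2,v_2}(q).
\]
Since this holds for every $h$, the two $\widetilde{R}$-polynomials coincide.

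There is essentially no obstacle here; the content of the theorem is that all three ingredients needed to pass from flipclass data to $\widetilde{R}$-coefficients — the combinatorial interpretation $[q^h]\widetilde{R}_{u,v}=\sum c(F)$, the combinatorial invariance of the flipclass decomposition, and the well-definedness of $c(F)$ on combinatorial isomorphism classes within $\mathcal F$ — have already been set up in Sections~2--5. The only care required is to ensure that $c$ is defined on all flipclasses of $[u_1,v_1]$ and $[u_2,v_2]$; this is guaranteed by the hypothesis that these flipclasses belong to $\mathcal F$, on which Flip Combinatorial Invariance (and in particular the reflection-ordering independence implicit in the definition of $c$) is assumed to hold.
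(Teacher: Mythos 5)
Your proposal is correct and follows essentially the same route as the paper: the identity $[q^h]\widetilde{R}_{u,v}(q)=\sum_F c(F)$ from Theorem~\ref{Dyertilde}, the combinatorial invariance of the multiset of combinatorial isomorphism classes of $h$-flipclasses (Remark~\ref{costante}), and the hypothesis that $c$ is constant on combinatorial isomorphism classes within $\mathcal F$. Your extra care about the well-definedness of $c(F)$ (which is built into the Flip Combinatorial Invariance predicate) is a point the paper leaves implicit, but it does not change the argument.
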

\begin{exm}
Consider the interval $[u,v]$ in Example~\ref{nottefonda2}. Then $c(F)=1$ for each of the four flipclasses $F$ of $[u,v]$. Indeed, $\widetilde{R}_{u,v}(q)=q^5+2q^3+q$. 
\end{exm}

Flipclasses and their combinatorial types are not easily described. Thence, the need of convenient invariants.
\begin{defn}
Let $\mathcal F$ be a set of flipclasses. A function having as domain $\mathcal F$ is a \emph{flip combinatorial invariant}  for $\mathcal F$ provided that it is constant on combinatorial isomorphism classes of $\mathcal F$.
Given two functions $f$ and $g$ having as domain $\mathcal F$, we say that $f$ \emph{refines} g provided that $g(F_1) = g(F_2)$ for all flipclasses $F_1$ and $F_2$ in $\mathcal F$ such that  $f(F_1) = f(F_2)$. 
\end{defn}

\begin{rmk}
$ $
\begin{enumerate}
\item Flip Combinatorial Invariance for  $\mathcal F$ is equivalent to saying that the function $c$ counting increasing paths is a flip combinatorial invariant  for $\mathcal F$.
\item If a flip combinatorial invariant $f$ refines $g$, then also $g$ is a flip combinatorial invariant.
\end{enumerate}
\end{rmk}

We adapt to our needs a definition from \cite[Section 1]{Chap}.
\begin{defn} Given a flipclass $F$, consider its time-support poset $TSP_F$.
  The \emph{valence polynomial} of $F$, denoted $\D(F)$,  is
  \[
    \D(F)
    =
    \sum_{(a,b)\in TSP_F\times TSP_F:\atop a\leq b}
    x^{\operatorname{in}(a,b)} y^{\operatorname{out}(a,b)},
  \]
  where
  \begin{align*}
    \operatorname{in}(a,b) & = |\{ c \in TSP_F | a\lhd c\le b \mbox{ or } a \le c \lhd b \} |, \\
    \operatorname{out}(a,b)& = |\{ c \in TSP_F | c\lhd a \mbox{ or } b \lhd c \} |,
  \end{align*}
 and $\lhd$ is used to indicate cover relations.
\end{defn}

\begin{rmk}
  \label{rmk:D mult}
  As our valence polynomial is the specialization of the polynomial defined in \cite{Chap} at $x = \overline x$ and $y = \overline y$, it inherits its properties.
  In particular, since the time-support poset is multiplicative (see Remark~\ref{moltimolti}), we have 
  \[
    \D(F_1 \ast F_2) = \D(F_1) \, \D(F_2).
  \]
  \end{rmk}

\begin{lem}
  \label{lem:same valence}
  If $F_1$ and $F_2$ are combinatorially isomorphic or combinatorially anti-isomorphic, then $\D(F_1) = \D(F_2)$.
\end{lem}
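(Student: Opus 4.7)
The plan is to observe that the valence polynomial $\D(F)$ depends only on the time-support poset $TSP_F$, and then exploit the facts (recorded in the excerpt just after the definition of combinatorial (anti-)isomorphism) that combinatorially isomorphic flipclasses have isomorphic time-support posets, while combinatorially anti-isomorphic flipclasses have dual time-support posets.

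First, I would note that both the summation domain (comparable pairs $(a,b) \in TSP_F \times TSP_F$ with $a \leq b$) and the exponents $\operatorname{in}(a,b)$, $\operatorname{out}(a,b)$ are defined using only the partial order $\leq$ and its covering relation $\lhd$ on $TSP_F$. Hence $\D$ factors through the poset-isomorphism type of $TSP_F$, and the combinatorially isomorphic case is immediate: a poset isomorphism $TSP_{F_1} \to TSP_{F_2}$ is bijective on comparable pairs and preserves both $\operatorname{in}$ and $\operatorname{out}$ for every such pair, so the two defining sums match term by term.

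For the combinatorially anti-isomorphic case, let $\phi : TSP_{F_1} \to TSP_{F_2}$ be the order-reversing bijection and set $a' := \phi^{-1}(a)$, $b' := \phi^{-1}(b)$. Then $a \leq_{F_2} b$ iff $b' \leq_{F_1} a'$, so the map $(a,b) \mapsto (b', a')$ is a bijection between the corresponding summation domains. The key step is to check that
\[
\operatorname{in}_{F_2}(a,b) = \operatorname{in}_{F_1}(b', a') \qquad \text{and} \qquad \operatorname{out}_{F_2}(a,b) = \operatorname{out}_{F_1}(b', a').
\]
Since $\phi$ reverses both $\leq$ and $\lhd$, the condition ``$a \lhd_{F_2} c \leq_{F_2} b$'' becomes ``$b' \leq_{F_1} c' \lhd_{F_1} a'$'', and ``$a \leq_{F_2} c \lhd_{F_2} b$'' becomes ``$b' \lhd_{F_1} c' \leq_{F_1} a'$''; similarly ``$c \lhd_{F_2} a$'' becomes ``$a' \lhd_{F_1} c'$'' and ``$b \lhd_{F_2} c$'' becomes ``$c' \lhd_{F_1} b'$''. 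Feeding these translations into the definitions yields the two equalities above. Reindexing the sum defining $\D(F_2)$ via $(a,b) \mapsto (b', a')$ then gives $\D(F_1)$.

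There is no serious obstacle here: the statement is essentially a bookkeeping check once the observation about $TSP_F$ is invoked, with the only delicate point being the careful translation of $\operatorname{in}$ and $\operatorname{out}$ under duality in the anti-isomorphic case. The fact that these two quantities are designed symmetrically with respect to the swap $a \leftrightarrow b$ combined with reversal of $\lhd$ is precisely what makes the duality work.
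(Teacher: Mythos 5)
Your proof is correct and follows essentially the same route as the paper: reduce to the fact that combinatorial isomorphism (resp.\ anti-isomorphism) of flipclasses yields isomorphic (resp.\ dual) time-support posets, and then observe that the definitions of $\operatorname{in}$ and $\operatorname{out}$ are invariant under poset isomorphism and behave correctly under duality via the swap $(a,b)\mapsto(b',a')$. The paper's proof merely asserts the duality step, whereas you verify it explicitly; your translation of the conditions defining $\operatorname{in}$ and $\operatorname{out}$ is accurate.
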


\begin{proof}
  If $F_1$ and $F_2$ are combinatorially isomorphic, they have isomorphic time-support posets and hence the same valence polynomial.
  If $F_2$ and $F_2$ are combinatorially anti-isomorphic, then their time-support posets are dual to each other, and this, due to the definition of the functions $\operatorname{in}$ and $\operatorname{out}$, gives rise to the same valence polynomial.
\end{proof}

\begin{rmk}
By Lemma~\ref{nottefonda}, a flipclass $F$ is dihedral if and only if one of the monomials of $\D(F)$ is $y^4$.
\end{rmk}

\begin{thm}
\label{flow}
The valence polynomial refines the function $c$ for:
\begin{enumerate}
\item the set of $h$-flipclasses of Weyl type with $h\in[6]$;
\item the set of $h$-flipclasses of symmetric type  with $h\in[7]$.
\end{enumerate}
\end{thm}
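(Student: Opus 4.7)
The plan is to rely on the finite reduction established in Section~6 and then perform an exhaustive computer check, while exploiting the available symmetries to keep the case list tractable. The statement is really a finite combinatorial claim in disguise, so the proof should consist of a rigorous enumeration rather than a structural argument.

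First, by Theorem~\ref{thm:reduction to finite}, an $h$-flipclass $F$ of Weyl (respectively, symmetric) type is isomorphic to a flipclass $\bar F$ of a Weyl group (respectively, of a product of type $A$ Weyl groups) of rank at most $h$, with $c(F)=c(\bar F)$; by Lemma~\ref{lem:same valence} the valence polynomial is also preserved. Hence for statement (i) it suffices to consider all flipclasses living in Weyl groups of rank $\le 6$ (i.e., $A_n$ with $n\le 6$, $B_n$ with $n\le 6$, $D_n$ with $n\le 6$, $E_6$, $F_4$, $G_2$), and for statement (ii) all flipclasses of $h$-paths in type $A$ Weyl groups of rank $\le 7$. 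Each of these ambient groups is finite, so the resulting list of intervals $[u,v]$ and of $h$-flipclasses in $P_h(u,v)$ is finite. One may further restrict to irreducible ambient groups by factoring through Proposition~\ref{virgolaid}, provided one then reconstructs reducible cases compatibly with the multiplicativity of $\D$ (Remark~\ref{rmk:D mult}) and of $c$.

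Second, I would trim this list using Lemma~\ref{lem:(anti)-iso}: the three maps $x\mapsto xw_0$, $x\mapsto w_0x$, $x\mapsto w_0xw_0$ partition pairs $(u,v)$ into orbits of size up to four, and within each orbit both $\D$ and $c$ are constant (for $c$ this uses that anti-isomorphisms reverse reflection orderings, combined with Proposition~\ref{prec=precfinitetype}). So one representative per orbit suffices. Then, for each surviving interval $[u,v]$, one enumerates $P_h(u,v)$, groups the paths into flipclasses via the fixed-point-free involution of Lemma~\ref{mari}, builds the time-support poset $TSP_F$ and extracts $\D(F)$ from its definition, and separately counts $c(F)$ by choosing a fixed reflection ordering and counting paths with strictly increasing label sequence. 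The conclusion of the theorem amounts to verifying that, when all resulting pairs $(\D(F), c(F))$ are collected across the full case list, the first coordinate determines the second, i.e.\ that bucketing flipclasses by valence polynomial yields buckets on which $c$ is constant.

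The main obstacle is size rather than structure: for $h=6$ inside the largest Weyl groups ($E_6$, $D_6$, $B_6$) and for $h=7$ inside $S_8$, both the number of intervals and the number of length-$h$ paths between the endpoints are very large, and a naive enumeration is unlikely to terminate. To make the computation feasible one should generate flipclasses up to isomorphism on the fly (avoiding reprocessing), test $\D(F)$ equality before any heavier invariants, and use the factorization from Corollary~\ref{cor:unique factorization} so that reducible flipclasses are assembled from an already-computed library of irreducible ones rather than recomputed from scratch. I expect the genuinely delicate step to be establishing the symmetric case at $h=7$, where the rank bound only caps the ambient group at $S_8$ and the combinatorial explosion is most severe; the details of the chosen implementation belong in Section~8.
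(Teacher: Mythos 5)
Your proposal follows essentially the same route as the paper: reduce to finitely many cases via Theorem~\ref{thm:reduction to finite}, prune by the (anti-)isomorphisms of Lemma~\ref{lem:(anti)-iso} together with Lemma~\ref{lem:same valence}, enumerate the surviving flipclasses by computer, and verify that the valence polynomial determines $c$. The one place where the paper is more careful than your sketch is the reassembly of reducible cases: knowing that $\D$ determines $c$ on irreducible flipclasses plus multiplicativity of $\D$ and $c$ does not by itself rule out collisions between \emph{products} (e.g.\ $\D(F_1)\D(F_2)=\D(F_3)$ with $F_3$ irreducible), so the paper instead builds a map $\overline\gamma$ on the irreducible \emph{polynomial} factors of the valence polynomials and checks its consistency algorithmically --- a point made delicate by the fact that $\D(F)$ can factor even when $F$ is irreducible (the paper exhibits such an example in $B_6$). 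Your phrase ``reconstructs reducible cases compatibly with the multiplicativity'' gestures at this but would need to be implemented exactly along those lines; otherwise the argument is the same, including the enumeration trick (the paper generates flipclasses from increasing paths, using Proposition~\ref{greedy}) and the use of a quickly computable total order to exploit the $w_0$-conjugation symmetry.
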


Not to break the flow of the section, we postpone the proof of Theorem~\ref{flow} to the next section.

\begin{cor}
\label{tttt}
Flip Combinatorial Invariance holds for
\begin{itemize}
\item dihedral flipclasses (of any length);
\item  $h$-flipclasses of Weyl type, for $h\leq 6$;
\item $h$-flipclasses of symmetric type, for $h\leq 7$.
\end{itemize}
\end{cor}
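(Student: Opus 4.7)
The plan is to treat the three bullet points separately, combining results already proved in the paper. For the dihedral case, I would invoke Lemma~\ref{unasola} to note that there is only one combinatorial isomorphism type of dihedral $h$-flipclass for each $h$, so any two combinatorially isomorphic dihedral flipclasses automatically have the same length. Proposition~\ref{oradipranzo} then says that every dihedral flipclass has exactly one increasing path with respect to any reflection ordering, so $c(F)=1$ uniformly over dihedral flipclasses. Hence $c$ is trivially constant on combinatorial isomorphism classes of dihedral flipclasses.

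For the Weyl and symmetric type bullets, the plan is to chain the two key results: Lemma~\ref{lem:same valence} shows that the valence polynomial $\D$ is a flip combinatorial invariant (combinatorially isomorphic flipclasses have isomorphic time-support posets, hence the same $\D$), while Theorem~\ref{flow} asserts that $\D$ refines $c$ on the relevant ranges (Weyl type up to $h=6$, symmetric type up to $h=7$). Putting these together, if $F_1 \stackrel{\text{\tiny{cb}}}\cong F_2$ lie in either of these families, then $\D(F_1)=\D(F_2)$, and by the refinement property $c(F_1)=c(F_2)$. Note that $c$ is well defined on such flipclasses by Proposition~\ref{prec=precfinitetype} and Definition~\ref{defcfinitetype}, since Weyl and symmetric type are special cases of finite type.

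No serious obstacle remains at this stage: all the work is absorbed into Theorem~\ref{flow} (proved in the following section) and the previously established dihedral machinery. One small point worth remarking explicitly, in view of Remark~\ref{sbadiglio}(\ref{sb2}), is that the three bullets are proved separately; the statement is not claiming Flip Combinatorial Invariance for the union of these three families of flipclasses, but only within each family individually.
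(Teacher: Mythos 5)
Your proposal is correct and follows essentially the same route as the paper: the dihedral case via Proposition~\ref{oradipranzo} (so $c\equiv 1$ there), and the Weyl and symmetric cases by combining Lemma~\ref{lem:same valence} with Theorem~\ref{flow}. The extra details you supply (Lemma~\ref{unasola}, well-definedness of $c$ via Proposition~\ref{prec=precfinitetype}, and the remark about not claiming invariance for the union) are all consistent with the paper's argument.
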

\begin{proof}
The first assertion follows by  Proposition~\ref{oradipranzo}. The second and the third assertions follow by  Theorem~\ref{flow}. 
\end{proof}

\begin{rmk}
Let $\mathcal D$ denote the set of dihedral flipclasses. If the Flip Combinatorial Invariance holds for a class $\mathcal F$, then it holds also for $\mathcal F \cup \mathcal D$, since $\mathcal D$ is closed under combinatorial isomorphism (see Remark~\ref{sbadiglio}(\ref{sb2})).
\end{rmk}

We believe that the following conjecture might be true. 
\begin{con}
\label{flipclasscongettura}
Flip Combinatorial Invariance holds for  flipclasses of Weyl type.
\end{con}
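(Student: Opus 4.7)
The plan is to extend the strategy behind Theorem~\ref{flow} to unbounded $h$. First, by Theorem~\ref{thm:reduction to finite}, any $h$-flipclass of Weyl type is isomorphic to a flipclass in a Weyl group of rank at most $h$, and by Corollary~\ref{cor:unique factorization} combined with the multiplicativity of both $c$ and $\D$ (Remark~\ref{rmk:D mult}), one may further restrict to irreducible flipclasses in irreducible Weyl groups. For each fixed $h$ this cuts the problem down to a finite list of ambient Weyl groups, namely types $A_n$, $B_n$, $D_n$ with $n\leq h$ together with the finite exceptional families, turning the conjecture into a concrete structural question about intervals in these groups.

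Second, I would try to push further the refinement already successful for $h\leq 6$: namely, to prove that the valence polynomial $\D$, or some systematic refinement of it, distinguishes pairs of Weyl-type flipclasses with distinct $c$. When $\D$ alone fails, one may enlarge the invariant by recording longer in/out chain statistics in the labelled time-support graph, or by tracking the multiplicities with which each irreducible isomorphism class of sub-flipclasses appears along a chosen time decomposition. Throughout, the anti-isomorphisms of Lemma~\ref{lem:(anti)-iso} induced by multiplication by $w_0$ continue to play an essential role in cutting the number of cases roughly in half.

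The principal obstacle is to close the conjecture uniformly across the infinite families $A_n$, $B_n$, $D_n$. A purely computer-assisted approach cannot suffice, because the rank bound grows with $h$ and enumeration of intervals in large-rank Weyl groups becomes intractable. A natural remedy is an inductive reduction: given an irreducible $h$-flipclass $F$, identify either a cut time $i$ at which the time-support poset decomposes according to flipclasses of smaller length, or a proper reflection subgroup of $W(F)$ to which one descends via Theorem~\ref{DyeComp1.thm1.4}, thereby reducing to $(h-1)$-flipclasses. The delicate step is to verify that any combinatorial isomorphism $F_1 \stackrel{\text{\tiny{cb}}}\cong F_2$ lifts to \emph{compatible} decompositions whose pieces satisfy the inductive hypothesis; this is where the unlabelled nature of combinatorial isomorphism bites, since the decompositions one would naturally perform on the labelled side need not be intrinsic to the underlying poset.

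A complementary line of attack is to generalize the dihedral proof of Proposition~\ref{oradipranzo}, where Lemma~\ref{espomariancona24} yields a greedy algorithm producing the unique increasing path. In higher-rank Weyl groups greediness alone fails to produce a bijection between increasing paths of two combinatorially isomorphic flipclasses, but one might hope to parametrize the set of increasing paths by a combinatorial object intrinsic to the time-support poset (for instance, linear extensions of an auxiliary poset, or lattice points in a polytope determined by the in/out data), which would render the count combinatorially invariant by construction. The hard part, in any of these lines, will be to handle the infinitely many pairs of combinatorially isomorphic but not isomorphic flipclasses that arise from flipclasses of types $A$, $B$, and $D$ simultaneously: the very phenomenon that makes Conjecture~\ref{comb-inv-con} nontrivial.
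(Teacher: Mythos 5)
There is no proof to compare against: the statement you are addressing is Conjecture~\ref{flipclasscongettura}, which the paper explicitly leaves open (``We believe that the following conjecture might be true''). The paper only establishes the cases of $h$-flipclasses of Weyl type for $h\leq 6$ (Theorem~\ref{flow} and Corollary~\ref{tttt}), by a computer-assisted check that the valence polynomial refines $c$ after the finiteness reductions of Theorem~\ref{thm:reduction to finite}. Your text is a research programme, not a proof, and you say so yourself at several points (``I would try to push further,'' ``one might hope to parametrize,'' ``the delicate step is to verify\dots'').

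The gap is therefore the entire content of the conjecture for $h\geq 7$. Concretely: (i) the reduction via Theorem~\ref{thm:reduction to finite}, Corollary~\ref{cor:unique factorization} and Remark~\ref{rmk:D mult} only yields, \emph{for each fixed} $h$, a finite list of ambient groups whose rank grows with $h$; it provides no uniform statement across all $h$, and you correctly note that enumeration becomes intractable. (ii) The valence polynomial is not known to refine $c$ beyond the ranges already checked, and your proposed enlargements of the invariant (longer chain statistics, multiplicities of sub-flipclasses) are not defined precisely nor shown to be combinatorial invariants of the unlabelled flipclass. (iii) The inductive decomposition you sketch founders exactly where you say it does: a combinatorial isomorphism $F_1\stackrel{\text{\tiny{cb}}}{\cong}F_2$ sees only the unlabelled time-support data, so there is no guarantee that a cut time or a descent to a reflection subgroup of $W(F_1)$ corresponds to anything intrinsic on the $F_2$ side. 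None of these steps is closed, so no part of the conjecture beyond what the paper already proves is established by your proposal.
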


\subsection{Consequences for the Combinatorial Invariance Conjecture}
We explicitly state some consequences of the previous results.
\begin{thm}
\label{combinatoriainvarianza}
Let $[u_1,v_1]$ and   $[u_2,v_2]$ be two isomorphic intervals (of any length) in two Weyl groups. Then    
 $$\widetilde{R}_{u_1,v_1} \equiv  \widetilde{R}_{u_2,v_2}  \pmod {q^7}.$$
If, moreover, one restricts to Weyl groups of type $A$, then  
$$\widetilde{R}_{u_1,v_1} \equiv  \widetilde{R}_{u_2,v_2}  \pmod {q^8}.$$
\end{thm}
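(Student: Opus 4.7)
The plan is to combine the combinatorial interpretation of $\widetilde R$-polynomials (Theorem~\ref{Dyertilde}) with the flipclass decomposition and the already-established Flip Combinatorial Invariance (Corollary~\ref{tttt}). By Theorem~\ref{Dyertilde} one has
$$[q^h]\widetilde R_{u,v}(q) = \sum_F c(F),$$
where $F$ ranges over the $h$-flipclasses of paths from $u$ to $v$ (using Proposition~\ref{prec=precfinitetype}/Theorem~\ref{prec=prec} to make $c(F)$ well-defined, which applies since we are inside a Weyl group). Hence it suffices to show that for $h\leq 6$ in the Weyl case (respectively $h\leq 7$ in the type-$A$ case), the sum $\sum_F c(F)$ depends only on the isomorphism type of $[u,v]$ as a poset.

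First, by Remark~\ref{costante}, the multiset of combinatorial isomorphism classes of $h$-flipclasses from $u$ to $v$ is an invariant of the poset $[u,v]$. Therefore it is enough to prove that whenever $F$ and $F'$ are combinatorially isomorphic $h$-flipclasses occurring respectively in two Weyl groups (with $h\leq 6$) or in two type-$A$ Weyl groups (with $h\leq 7$), one has $c(F)=c(F')$.

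Next, I would observe that any flipclass $F$ appearing in a Weyl group is automatically of \emph{Weyl type}: by definition $W(F)$ is a reflection subgroup of the ambient Weyl group, and by the Borel--de Siebenthal / Dyer classification referenced in the paper, every reflection subgroup of a Weyl group is itself a Weyl group. Similarly, a flipclass in a type-$A$ Weyl group is of \emph{symmetric type}, since every reflection of $S_n$ is a transposition, so any subgroup generated by reflections is a Young subgroup, i.e.\ a product of symmetric groups. Consequently the pairs $(F,F')$ we must compare lie entirely within the sets covered by Corollary~\ref{tttt}.

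The conclusion is then immediate: Corollary~\ref{tttt} gives Flip Combinatorial Invariance for $h$-flipclasses of Weyl type with $h\leq 6$ and for $h$-flipclasses of symmetric type with $h\leq 7$, so $c(F)=c(F')$ in each of the two regimes. Summing over the (common) multiset of combinatorial isomorphism classes shows that $[q^h]\widetilde R_{u_1,v_1}=[q^h]\widetilde R_{u_2,v_2}$ for $h\leq 6$ in the Weyl case and for $h\leq 7$ in the type-$A$ case, which is exactly the two claimed congruences modulo $q^7$ and $q^8$. There is essentially no obstacle: all the genuine content is hidden inside Theorem~\ref{flow}, and the present theorem is a formal consequence once one checks the (easy) fact that flipclasses in Weyl groups, respectively type-$A$ Weyl groups, belong to the appropriate classes $\mathcal F$.
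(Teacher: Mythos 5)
Your proposal is correct and follows essentially the same route as the paper: the paper's proof is a one-line reference to Corollary~\ref{tttt} combined with the argument of Theorem~\ref{FCICimpliesCIC}, i.e., the decomposition $[q^h]\widetilde R_{u,v}=\sum_F c(F)$ together with the poset-invariance of the multiset of combinatorial isomorphism classes of $h$-flipclasses. Your additional verification that flipclasses in Weyl groups are of Weyl type and flipclasses in type-$A$ groups are of symmetric type is exactly the (correct) bridging fact the paper leaves implicit.
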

\begin{proof}
Follows from Corollary~\ref{tttt} by a similar argument to the proof of Theorem~\ref{FCICimpliesCIC}.
\end{proof}
\begin{thm}
The Combinatorial Invariance Conjecture holds for all intervals 
\begin{itemize}
\item up to length 8, for the set of Weyl groups,
\item
up to length 10, for the set of Weyl groups of type $A$.
\end{itemize}
\end{thm}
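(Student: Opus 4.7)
The plan is to induct on $\ell=\ell(v)-\ell(u)$, combining Theorem~\ref{combinatoriainvarianza} with the monicity, the vanishing constant term, and the parity of $\widetilde{R}$-polynomials, and reserving the Kazhdan--Lusztig recursion for the single delicate case. I will use that, for $u<v$, the polynomial $\widetilde{R}_{u,v}(q)$ is monic of degree $\ell$, satisfies $\widetilde{R}_{u,v}(0)=0$, and is supported on degrees congruent to $\ell$ modulo $2$ (since every edge in the Bruhat graph changes length by an odd integer, so the number of edges and the total length gain have the same parity).

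For Weyl groups with $\ell\leq 8$ the argument is immediate: the mod-$q^7$ half of Theorem~\ref{combinatoriainvarianza} gives agreement of $[q^h]\widetilde{R}_{u,v}$ for all $h\leq 6$, monicity supplies $[q^\ell]\widetilde{R}_{u,v}=1$, and parity kills whatever is left (only $[q^7]$ when $\ell=8$, and it vanishes on both sides). An identical argument using the mod-$q^8$ half of Theorem~\ref{combinatoriainvarianza} settles the type-$A$ case for all $\ell\leq 9$.

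The main obstacle will be $\ell=10$ in type~$A$: after applying mod~$q^8$, monicity, and parity, only the coefficient $[q^8]\widetilde{R}_{u,v}$ remains undetermined. To pin it down I would use the Kazhdan--Lusztig recursion
\[
R_{u,v}(q) \;=\; q^{10} P_{u,v}(q^{-1}) - P_{u,v}(q) - L(q),
\qquad
L(q) \;=\; \sum_{u<z<v} R_{u,z}(q)\,P_{z,v}(q).
\]
By the inductive hypothesis (CIC for type-$A$ intervals of length at most $9$, already handled), each $R_{u,z}$ and $P_{z,v}$ is a combinatorial invariant of its interval, so $L(q)$ is determined by the isomorphism type of the poset $[u,v]$. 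The degree bound $\deg P_{u,v}\leq 4$ forces $q^{10}P_{u,v}(q^{-1})-P_{u,v}(q)$ to have no $q^5$ term, whence $[q^5]R_{u,v}=-[q^5]L(q)$ is itself a combinatorial invariant. Finally, expanding $R_{u,v}(q)=q^{5}\widetilde{R}_{u,v}(q^{1/2}-q^{-1/2})$ via the identity $q^{n/2}(q^{1/2}-q^{-1/2})^n=(q-1)^n$ writes $[q^5]R_{u,v}$ as an explicit integer linear combination of the coefficients $[q^{2k}]\widetilde{R}_{u,v}$ for $k\in\{1,2,3,4\}$, in which the coefficient of $[q^8]\widetilde{R}_{u,v}$ is $\binom{8}{4}=70\neq 0$. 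Since the $[q^{2k}]\widetilde{R}_{u,v}$ for $k\leq 3$ are already known to be invariants, solving this linear relation determines $[q^8]\widetilde{R}_{u,v}$ as well and closes the induction.
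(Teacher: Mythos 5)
Your proposal is correct, and its first part (lengths $\leq 8$ for Weyl groups and $\leq 9$ in type $A$, via monicity, the vanishing constant term, parity, and Theorem~\ref{combinatoriainvarianza}) is exactly the paper's argument. Where you genuinely diverge is the length-$10$ step in type $A$, where only $[q^8]\widetilde{R}_{u,v}$ remains open. The paper stays entirely within the world of $\widetilde{R}$-polynomials: it invokes the quadratic identity $\sum_{z\in[u,v]}(-1)^{\ell(z)-\ell(u)}\widetilde{R}_{u,z}\widetilde{R}_{z,v}=0$ for $u<v$, and observes that when $\ell(v)-\ell(u)$ is even the two extreme terms $z=u$ and $z=v$ both contribute $+\widetilde{R}_{u,v}$, so $2\widetilde{R}_{u,v}$ equals a signed sum of products over proper subintervals, all of length $\leq 9$ and hence already invariant; this yields the clean general principle that CIC up to an odd length $k$ implies CIC up to length $k+1$. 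Your route instead passes through the Kazhdan--Lusztig recursion together with the degree bound $\deg P_{u,v}\leq 4$ to show that $[q^5]R_{u,v}=-[q^5]L(q)$ is an invariant, and then unwinds the change of variables to solve for $[q^8]\widetilde{R}_{u,v}$ via the coefficient $(-1)^4\binom{8}{4}=70$. This is correct --- the inductive hypothesis does give invariance of each $P_{z,v}$, since those polynomials are determined by the $\widetilde{R}$-polynomials of subintervals of length $\leq 9$ --- but it requires the extra inputs of the $P$-polynomial machinery, its degree bound, and an explicit binomial computation, all of which the paper's identity sidesteps; in compensation, your argument makes transparent exactly which single coefficient of $R_{u,v}$ carries the missing information. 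Both arguments ultimately prove the same parity principle.
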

\begin{proof}
Recall that, given a polynomial $P$ in the variable $q$, we denote  the coefficient  of $q^s$ in  $P$ by  $[q^s]P(q)$ .

Given $u$ and $v$ in an arbitrary Coxeter group with $u\leq v$, the polynomial  $\widetilde{R}_{u,v}(q)$ is  monic, has  degree $\ell(v) - \ell(u)$, and satisfies $[q^s]\widetilde{R}_{u,v} = 0$ whenever $s \not\equiv \ell(v) - \ell(u) \pmod2$. Hence, Theorem~\ref{combinatoriainvarianza} implies the first assertion, as well as the fact that, for the set of type $A$ Weyl groups, the Combinatorial Invariance Conjecture holds for all intervals up to length 9. It is a general fact that, if the Combinatorial Invariance Conjecture holds for all intervals of a Coxeter group $W$ up to length $k$, for a certain $k$ odd, then it holds also for intervals of length $k+1$ of  $W$.  This follows by the fact that the $\widetilde{R}$-polynomials of an interval $[u,v]$ satisfy
$\sum_{z\in[u,v]}(-1)^{\ell(v)-\ell(u)}\widetilde{R}_{u,z}\widetilde{R}_{z,v}=0$, for all $u\neq v$.
\end{proof}
The Combinatorial Invariance Conjecture was previously known to hold for intervals  up to length $4$ for all Coxeter groups (see \cite [Chapter 5,  Exercises 5, 6, 7 and 8]{BB}),  up to length $6$ for the set of Weyl groups of type $B$, up to length $6$ for the set of Weyl groups of type $D$, and up to length $8$ for the set of Weyl groups of type $A$ (see \cite{Inc}).

\section{Proof of Theorem~\ref{flow}}

We have to prove that, given two Weyl type $h$-flipclasses $F$ and $F'$ with $h\in[6]$, or two symmetric type $h$-flipclasses $F$ and $F'$ with $h\in[7]$, if they have the same valence polynomial,  then they have the same number of increasing paths, i.e.,  $ \D(F) = \D(F')$ implies $c(F)=c(F')$. Theorem~\ref{thm:reduction to finite} reduces this analysis to a finite number of cases. Indeed, we may suppose that $F$ and $F'$ are flipclasses of a rank $h$ Coxeter group that is a Weyl group in the first case, or a product of Weyl groups of type $A$ in the second case.

The proof of both claims is performed simultaneously by direct inspection with the help of a computer algebra software \cite{Sage}.
In order to reduce the amount of computations to be performed to a tractable amount, we apply several reductions that we will discuss after describing the strategy of the proof.

Through this section, $\mathcal{F}$ will denote the union of the set of all  $h$-flipclasses, with $h\in[6]$, in a Weyl group of rank at most $6$  and the set of all  $h$-flipclasses, with $h\in[7]$, in a rank at most $7$ group which is a product of type $A$ Weyl groups.
Define the sets
\[
  \mathfrak{D} = \left\{ \D(F) \,|\, F\in\mathcal{F} \right\} \qquad \text{and } \qquad \mathfrak{C} = \left\{ c(F) \,|\, F\in\mathcal{F} \right\}. 
\]

Theorem~\ref{flow} claims the existence of a map $\gamma:\mathfrak{D} \rightarrow \mathfrak{C}$ such that $\gamma(\D(F)) = c(F)$ for any $F\in\mathcal{F}$.
We construct $\gamma$ from a similar map satisfying slightly more restrictive assumptions.

Specifically, let $\overline{\mathfrak{D}}$ be the set of all irreducible factors of polynomials in $\mathfrak{D}$. 
We verify that there is a map $\overline\gamma:\overline{\mathfrak{D}}\rightarrow \mathbb{R}$ such that, if $\D(F) = d_1\cdots d_\ell$ is a factorization in irreducibles, then $\overline\gamma(d_1)\cdots \overline\gamma(d_\ell) = c(F)$.
Clearly such a $\overline\gamma$ yields the desired map $\gamma$ by multiplication.
On the other hand, the requirement that $\overline\gamma$ is defined on irreducible factors imposes extra conditions since the valence polynomial $\D(F)$ might factorize even when $F$ is irreducible (cf. Remark~\ref{rmk:virtual}).

Let $\mathcal{F}'$ be the subset of $\mathcal{F}$ consisting only of the $h$-flipclasses contained in an irreducible group. 
In view of Remark~\ref{rmk:D mult} and Corollary~\ref{cor:unique factorization}, the sets of all irreducible factors of polynomials in $\left\{ \D(F) \,|\, F\in\mathcal{F}' \right\}$ coincides with $\overline{\mathfrak{D}}$.
It suffices therefore to consider only flipclasses in $\mathcal{F}'$.

Denote by $\mathcal{F}'(h)$ the subset of $\mathcal{F}'$ consisting only of $h$-flipclasses.
By an abuse of notation, set $\overline\gamma(d_1 d_2)=\overline\gamma(d_1)\overline\gamma(d_2)$ when both $\overline\gamma(d_1)$, and $\overline\gamma(d_2)$ are defined.
Moreover set $\overline\gamma(1)=1$.
We define $\overline\gamma$ and simultaneously check its consistency by the following algorithm.
\begin{algorithmic}
  \For{$h\in \mathbb{N}$}
		\For{$F\in\mathcal{F}'(h)$}
      \State $g \gets$ largest factor of $\D(F)$ on which $\overline\gamma$ is defined
      \If{$g = \D(F)$}
        \State check that $\overline\gamma(g) = c(F)$ or fail
      \Else
        \State $k \gets $ number of irreducible factors of $\D(F)/g$ counted with multiplicity
        \For{$d$ irreducible factor of $\D(F)/g$}
        \State $\overline\gamma(d) \gets (c(F)/\overline\gamma(g))^{1/k}$
        \label{alg-step: roots} 
        \EndFor
      \EndIf
		\EndFor	
	\EndFor
\end{algorithmic}

In order to run successfully the above algorithm, we need a way to construct all the $h$-flipclasses in an irreducible group $W$.
Since, by Proposition~\ref{greedy}, the integer $c(F)$ is at least $1$, this can be done by listing, for all elements $u\in W$, all the increasing paths of length $h$ starting at $u$ and computing, for each of them, the corresponding flipclass.

Building flipclasses is by far the most computationally intensive part of the proof of Theorem~\ref{flow}.
The first observation one can make in order to reduce the number of cases to be checked is that certain Weyl groups appear naturally as subgroups of other Weyl groups.
It suffices therefore to consider only $h$-flipclasses in the groups listed in Table~\ref{table:numerology}.

\begin{table}[h!]
\caption{
  Some statistics about the number of $h$-flipclasses and the number of associated valence polynomials for irreducible Weyl groups.
}
  \label{table:numerology}
\centering
\begin{tabular}{ c || c | r | r | r | r | r  }
  \
  {\small Type} & $h$ & {\small \begin{tabular}{c}elements\\ to\\ check\end{tabular}} & {\small flipclasses} &  {\small \begin{tabular}{c}valence\\ polynomials\end{tabular}} & {\small \begin{tabular}{c}irreducible\\ valence\\ polynomials\end{tabular}} & {\small \begin{tabular}{c}new\\ irreducible\\ valence\\ polynomials\end{tabular}} \\
\hline
\hline
$A_1$ & 1 & 1          & 1                     & 1             & 1              & 1                   \\
\hline
$A_2$ & 2 & 1          & 1                     & 1             & 0              & 0                   \\
$B_2$ & 2 & 3          & 8                     & 1             & 0              & 0                   \\
$G_2$ & 2 & 5          & 25                    & 1             & 0              & 0                   \\
\hline
$A_3$ & 3 & 3          & 15                    & 4             & 3              & 3                   \\
$B_3$ & 3 & 16         & 216                   & 8             & 7              & 4                   \\
\hline
$A_4$ & 4 & 16         & 363                   & 11            & 7              & 7                   \\
$B_4$ & 4 & 125        & 11987                 & 206           & 198            & 191                 \\
$D_4$ & 4 & 53         & 2283                  & 19            & 15             & 0                   \\
$F_4$ & 4 & 437        & 144281                & 1765          & 1757           & 1585                \\
\hline
$A_5$ & 5 & 92         & 11343                 & 100           & 89             & 89                  \\
$B_5$ & 5 & 1255       & 1085742               & 17213         & 17007          & 16918               \\
$D_5$ & 5 & 285        & 102724                & 1203          & 1184           & 402                 \\
$F_4$ & 5 & 350        & 114018                & 8421          & 8317           & 7726                \\
\hline
$A_6$ & 6 & 488        & 425442                & 2362          & 2256           & 2256                \\
$B_6$ & 6 & 15232      & 144074849             & 1838321       & 1821079        & 1818824             \\
$D_6$ & 6 & 6896       & 31903124              & 174545        & 173336         & 103645              \\
$E_6$ & 6 & 8720       & 95718946              & 1112864       & 1111655        & 1007217             \\
$F_4$ & 6 & 350        & 99171                 & 10053         & 9727           & 9029                \\
\hline
$A_7$ & 7 & 4072       & 25935215              & 95296         & 92913          & 92913               \\
\end{tabular}
\end{table}

Using the anti-isomorphism $(x\mapsto xw_0, t\mapsto t)$ and Lemma~\ref{lem:same valence}, we can further restrict our attention to $h$-flipclasses from $u$ to $v$ such that 
\[
  \ell(u) \leq \frac{\ell(w_0)-h}{2}
  \qquad
  \mbox{and}
  \qquad
  \ell(u) \leq \ell(w_0) - \ell(v).
\]

Finally, recall that in any Weyl group a reduced word for a \emph{Coxeter element} $c=s_1\cdots s_n$ (i.e. any total order among the Coxeter generators) induces a total order on the group.
Namely, given $u\in W$ the \emph{$c$-sorting word} $c(u)$ of $u$ is the lexicographically first subword of the infinite word $c^\infty=s_1\cdots s_n s_1 \cdots$ that is a reduced word for $u$.
We say that $u\prec_c v$ whenever $c(u)$ precedes $c(v)$ lexicographically as subwords of $c^\infty$
(C.f. \cite{Rea1, Rea2} for further details). 

Fixing one such order for any irreducible Weyl group, using Lemma~\ref{lem:same valence} and the isomorphism $(x\mapsto w_0xw_0, t\mapsto w_0tw_0)$, we can further restrict our study to flipclasses from $u$ to $v$ such that either $u\prec_c w_0 u w_0$ or $ u = w_0 u w_0$ and $v\preceq_c w_0 v w_0$.
We note that the choice of this total order is immaterial: its only feature we use is that elements can be quickly compaired without listing first the entire group; any other total order on $W$ with the same feature would work.

The number of remaining cases to be checked after all these reductions are listed in Table~\ref{table:numerology}.
There, for each group, we indicate how many elements should be considered as starting point for flipclasses and how many flipclasses satisfying the above conditions there are.
We then list how many different valence polynomials these flipclasses correspond to, how many of these are irreducible and how many of these irreducibles did not appear already in a group listed in a preceding row.

We conclude this section with two experimental observations.

\begin{rmk}
  Even though the last step in our algorithm may in theory define $\overline\gamma(d)$ not to be an integer, this never happens in the cases we considered.
  Specifically, parsing the groups in the order given by Table~\ref{table:numerology}, whenever not trivial, $\D(F)/g$ is always irreducible and therefore has $k=1$.
  Moreover in these cases $\overline\gamma(g)$ always divides $c(F)$.
  Therefore $\overline\gamma$ only assumes integer values on $\overline{\mathfrak{D}}$.
\end{rmk}

\begin{rmk}
  \label{rmk:virtual}
  As noted above, it is possible that $\D(F)$ is reducible even though the flipclass $F$ is irreducible. 
  On the one hand, it is possible that a valence polynomial factors into polynomials that are not themselves valence polynomials of some other flipclasses.
  However, this phenomenon seems to be rare.
  Indeed, in the computations we performed, there is a unique flipclass up to combinatorial isomorphisms and anti-isomorphisms for which this is the case.
  It is an irreducible 6-flipclass which appears only in $B_6$ and  has  valence polynomial
  \[
    (x^2 + 2 y) \left(\begin{array}{c}
        x^{20} + 4 y x^{14} + 2 y x^{13} + 4 y^{2} x^{11} + 2 y x^{12} + 5 y^{2} x^{10} + 2 y^{3} x^{8} + 2 y x^{10} + 2 y^{10} + \\
        2 y^{6} x^{4} +  4 y^{2} x^{8} + 10 y^{7} x^{2} + 2 y^{5} x^{4} + 6 y x^{8} + 8 y^{8} + 6 y^{6} x^{2} + 2 y^{4} x^{4} + 82 y^{2} x^{6} + \\
        4 y^{7} + 6 y^{5} x^{2} + 198 y^{3} x^{4} + 4 y x^{6} + 4 y^{6} + 170 y^{4} x^{2} + 6 y^{2} x^{4} + 48 y^{5} + 8 y^{3} x^{2} + 6 y^{4}
      \end{array} \right).
  \]
  In particular, while the first factor is the valence polynomial of the unique flipclass in $A_1$, the second factor is not the valence polynomial of any 5-flipclass.

  On the other hand, it may also be possible that $\D(F)$ factors as the product of other valence polynomials coming from smaller flipclasses but that $F$ is irreducible.
  Checking if this latter situation arises is much more involved as one has to try to factorize flipclasses; we did not perform such a computation.
  This situation may in theory lead to inconsistencies in the definition of $\gamma$ but we did not encounter any.
\end{rmk}

\bigskip
{\bf Acknowledgments:} 
F.E. and M.M. would like to thank Grant T. Barkley, Francesco Brenti, Matthew Dyer, and Christian Gaetz for stimulating discussions we had during the Workshop \lq\lq Bruhat order: recent developments and open problems\rq\rq, held at the University of  Bologna from 15 April 2024 to 19 April 2024. S.S. would like to thank Vincent Pilaud for interesting insights. 
The authors would like to thank Frédéric Chapoton for useful information about polynomial invariants of posets.
The authors are grateful to Christophe Hohlweg for helpful discussions about reflection subgroups of infinite Coxeter groups. F.E. acknowledges PRIN 2022S8SSW2, \lq\lq Algebraic and geometric aspects of Lie theory\rq\rq.
M.M. acknowledges  PRIN 2022A7L229, \lq\lq ALgebraic and TOPological combinatorics\rq\rq. 
S.S. acknowledges  PRIN 20223FEA2E, \lq\lq Cluster algebras and Poisson Lie groups\rq\rq.  
 F.E. wishes to thank the Universit\`a Politecnica delle Marche for hospitality.
The authors are members of the INDAM group GNSAGA.
Most of the computations in this paper have been performed on the Linux HPC cluster Caliban of the High Performance Parallel Computing Laboratory of the Department of Information Engineering, Computer Science and Mathematics (DISIM) at the University of L’Aquila.

\end{document}